\newcommand{\tmem}[1]{{\em #1\/}}
\newcommand{\tmmathbf}[1]{\mathbf{#1}}
\newcommand{\tmop}[1]{{#1}}
\newcommand{\mathbbm}[1]{\mathbb{#1}}
\newcommand{\dueto}[1]{\textup{{(#1) }}}
\newcommand{\leirom}{\renewcommand{\labelenumi}{\textit{(\roman{enumi})}}}
\newcommand{\leialph}{\renewcommand{\labelenumi}{(\alph{enumi})}}
\newcommand{\leiarab}{\renewcommand{\labelenumi}{(\arabic{enumi})}}
\newcommand{\leiialph}{\renewcommand{\labelenumii}{(\alph{enumii})}}
\newcommand{\leiiarab}{\renewcommand{\labelenumii}{(\arabic{enumii})}}
\newenvironment{enumerateroman}
{\leirom \begin{enumerate}}
{\end{enumerate} \leiarab}
\newenvironment{itemizeminus}
  {\begin{itemize}}{\end{itemize}}
\newenvironment{enumeratenumeric}{\begin{enumerate}}{\end{enumerate}}
\newenvironment{enumeratealpha}
{\leialph \leiialph \begin{enumerate}}
{\end{enumerate} \leiarab \leiiarab}
\newcommand{\tmstrong}[1]{\textbf{#1}}
\theoremstyle{plain} 
\newtheorem{theorem}{Theorem} 
\newtheorem{proposition}{Proposition} 
\newtheorem{lemma}{Lemma}
\newtheorem{corollary}{Corollary}
\newtheorem*{itheorem}{Theorem}
\theoremstyle{plain} 
\newtheorem{definition}{Definition}
\theoremstyle{plain} 
\newtheorem{remark}{Remark}
\newtheorem{example}{Example}
\newtheorem{notation}{Notation}
\theoremstyle{nonumberplain} 
\newtheorem{proof}{Proof} 
\theoremstyle{nonumberplain} 
\theoremstyle{empty} 
\newcommand{\longrightarrowlim}{\mathop{\longrightarrow}\limits}
\begin{document}

\title{Sheaves of ordered spaces and interval theories}

\author{Krzysztof Worytkiewicz\\
AGH\\
Al. Mickiewicza 30\\
30-059 Krak\'ow, Poland }

\maketitle

\begin{abstract}
  We study the homotopy theory of locally ordered spaces, that is manifolds
  with boundary whose charts are partially ordered in a compatible way. Their
  category is not particularly well-behaved with respect to colimits. However,
  this category turns out to be a certain full subcategory of a topos of
  sheaves over a simpler site. The ambient topos makes available some general
  homotopical machinery. 
\end{abstract}

\section{Introduction}

{\noindent}It has been of interest for some time to
consider topological spaces where paths are made irreversible, globally or
locally. Such artifacts are well suited to model the behavior of interacting
computational processes, in a way which captures the flow of time. A typical
setup involves topological spaces interacting with order structures.
Computational paths are modeled by continuous locally non-decreasing maps.
Meaningful homotopies among such paths are the non-decreasing ones, that is
those which respect the flow of time. Such homotopies are called
{\tmem{directed}} in the litterature {\cite{FRG}}.

{\noindent}There are many variants of the notion of directed homotopy. In this paper we
study two important variations, namely ``Di-homotopy'' {\cite{FRG}} on one
hand and ``D-homotopy'' {\cite{grandis2}} on the other. Di-homotopy is much
like the usual homotopy in the category of topological spaces, in the sense
that the standard topological interval is used. However, it takes place in the
category of locally ordered spaces and so is equipped with the discrete order
while all the maps involved, including the homotopies themselves, are locally
non-decreasing. This is to be contrasted with D-homotopy where the standard
topological interval is equipped with the natural order from the start. It is
to be said that D-homotopy as studied in the literature occurs in settings
distinct to the present one, namely in so-called D-spaces with better
categorical properties. However, this is achieved at a price: it has to be
distinguished between directed and undirected paths in a way which may seem
arbitrary. D-homotopy makes nonetheless good sense also when living in the
category of locally ordered spaces. The relationship between these two notions
of directed homotopy has been a recurrent question for some time. The present
work can be seen as an effort to give a homotopy theoretic answer to this
question.

{\noindent}The paper is organized as follows. Section \ref{sec:locored} contains
definitions and some facts about our notion of elementary partially ordered
spaces viz. {\tmem{epo-spaces}} and our notion of locally partially ordered
spaces viz. {\tmem{local epo-spaces}}.

{\noindent}In section \ref{sec:losheaves} we introduce the site $( \mathbb{P}, \tau)$ of
epo-spaces and exhibit the category $\mathbb{L}$ of local epo-spaces as a full
subcategory of the topos of sheaves $\tmop{Sh} ( \mathbb{P}, \tau)$.


\begin{itheorem}
Let $\mathbb{L}$ be the category of
local epo-spaces. The embedding $h_{\mathbb{P} } : \mathbb{L} \longrightarrow
\tmop{Sh} \left( \mathbb{P}, \tau \right)$ given by restriction of the Yoneda
functor
\[ X \longmapsto \mathbb{L} (-, X)_{| \mathbb{P}} \]
is full and faithful.
\end{itheorem}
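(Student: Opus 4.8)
The plan is to recognise $h_{\mathbb{P}}$ as the restricted Yoneda embedding (the nerve) of the inclusion $\mathbb{P} \hookrightarrow \mathbb{L}$, and to reduce the claim to the statement that every local epo-space is glued from its epo-space charts. Since $\tmop{Sh}(\mathbb{P},\tau)$ is a full subcategory of the presheaf category $\widehat{\mathbb{P}} = [\mathbb{P}^{\mathrm{op}},\tmop{Set}]$, and since each $\mathbb{L}(-,X)_{|\mathbb{P}}$ is already known to be a $\tau$-sheaf, it suffices to prove that $h_{\mathbb{P}}$ is full and faithful as a functor into $\widehat{\mathbb{P}}$. By the standard characterisation of dense (adequate) subcategories, this is in turn equivalent to the density of $\mathbb{P} \hookrightarrow \mathbb{L}$, i.e. to the assertion that for every $X$ the tautological cocone exhibits $X$ as the colimit in $\mathbb{L}$ of the forgetful diagram $(\mathbb{P}\downarrow X) \to \mathbb{L}$. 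I would verify full faithfulness directly; in this concrete setting the two arguments below are precisely a verification of this density.

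For faithfulness, suppose $f,g : X \to Y$ in $\mathbb{L}$ satisfy $h_{\mathbb{P}}(f)=h_{\mathbb{P}}(g)$. Each chart inclusion $u_\alpha : U_\alpha \to X$ is an element of $\mathbb{L}(U_\alpha,X)$ with $U_\alpha$ an epo-space, so naturality forces $f\circ u_\alpha = g\circ u_\alpha$ for every $\alpha$. As the charts form an atlas covering $X$, the family $\{u_\alpha\}$ is jointly epimorphic in $\mathbb{L}$ (a morphism of local epo-spaces is determined by its restrictions to an atlas), whence $f=g$.

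For fullness, let $\theta : h_{\mathbb{P}}(X) \to h_{\mathbb{P}}(Y)$ be a natural transformation. Applying each component to the chart inclusions yields morphisms $f_\alpha := \theta_{U_\alpha}(u_\alpha) : U_\alpha \to Y$ in $\mathbb{L}$. Naturality of $\theta$ with respect to every map $V \to U_\alpha$ out of an epo-space $V$ forces these to be compatible once the overlaps are probed by epo-spaces: covering each intersection $U_\alpha\cap U_\beta$ by epo-spaces $V$ and comparing $f_\alpha$ and $f_\beta$ along the two inclusions $V\to U_\alpha$, $V\to U_\beta$ gives $f_\alpha|_{U_\alpha\cap U_\beta}=f_\beta|_{U_\alpha\cap U_\beta}$, so the $f_\alpha$ glue to a unique $f : X \to Y$. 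To check $h_{\mathbb{P}}(f)=\theta$, fix an epo-space $P$ and $x\in\mathbb{L}(P,X)$, and choose a $\tau$-cover $\{P_i\to P\}$ on which each restriction factors as $x_i = u_{\alpha(i)}\circ y_i$ through a chart; naturality then gives $\theta_{P_i}(x_i)=f_{\alpha(i)}\circ y_i = f\circ x_i$, and the sheaf condition on $\mathbb{L}(-,Y)_{|\mathbb{P}}$ assembles these into $\theta_P(x)=f\circ x$.

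The step I expect to be the main obstacle is the gluing of the $f_\alpha$, for two reasons. First, chart intersections $U_\alpha\cap U_\beta$ need not themselves be epo-spaces, so compatibility cannot be extracted from a single naturality square; it must be recovered by covering each overlap with epo-spaces and invoking naturality on each piece, which is exactly where the geometry of local epo-spaces and the definition of $\tau$ enter. Second, the passage from ``agrees on an atlas'' to ``equal'', and from compatible local data to an actual morphism $f$, rests on $X$ being the colimit of its charts in $\mathbb{L}$ together with the descent property of $\mathbb{L}(-,Y)_{|\mathbb{P}}$; verifying that the glued family is genuinely a morphism of local epo-spaces, rather than merely a continuous order-compatible map defined chartwise, is the technical heart of the proof.
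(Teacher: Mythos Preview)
Your argument is correct and follows essentially the same route as the paper: faithfulness via the jointly epimorphic chart inclusions (the paper's Lemma~\ref{lem:h-faith}, resting on Proposition~\ref{prop:lpc-colim} and Remark~\ref{rem:f-comp}), and fullness by setting $f_\alpha := \theta_{U_\alpha}(u_\alpha)$, checking compatibility on overlaps via naturality, gluing via the colimit description of $X$, and then verifying $\theta = f\circ(-)$ by covering an arbitrary probe $P$ with the preimages of charts (the paper's Lemma~\ref{lem:h-full}). Your density framing is a pleasant conceptual repackaging of exactly this.

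One overcomplication: in this paper's setting the intersection $U_\alpha\cap U_\beta$ of two charts \emph{is} already an epo-space (it is open in $U_\alpha$, hence homeomorphic to an open of some $\mathbb{H}^n$, and the restricted closed order stays closed), and the paper exploits this by applying naturality directly at $U_{ij}$ rather than covering the overlap further. Your more cautious version still works, but the anticipated ``main obstacle'' dissolves here; the genuine content is precisely Proposition~\ref{prop:lpc-colim}, that $X$ is the colimit in $\mathbb{L}$ of its chart-and-intersection diagram, which you correctly identify as what makes the gluing go through.
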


{\noindent}We further characterize those sheaves which are local epo-spaces,
ultimately in terms of {\tmem{\'etale}} dimaps. As might be expected, we call
a dimap \'etale if the underlying continuous map is a local homeomorphism.
Such dimaps are obviously stable under pullbacks. \'Etale dimaps lead to the
notion of $\mathbb{P}$-locality. Namely, a morphism of sheaves $\alpha : F
\longrightarrowlim G$ is {\tmem{$\mathbb{P}$-local}} if pulling back any
morphism
\[ h_{\mathbb{P}} (X) \longrightarrow G \]
from a local epo-space $h_{\mathbb{P} } (X)$ to $G$ along $\alpha$ yields
another local epo-space $h_{\mathbb{P} } (Y)$


\begin{center}
$
\xy
{\ar@{->}^{\pi_1} (0,0)*+{h_\mathbb{P}(Y)} ; (25,0)*+{h_\mathbb{P}(X)}};
{\ar@{->}_{\pi_2} (0,0)*+{h_\mathbb{P}(Y)} ; (0,-20)*+{F}};
{\ar@{->}_\alpha (0,-20)*+{F} ; (25,-20)*+{G}};
{\ar@{->} (25,0)*+{h_\mathbb{P}(X)} ; (25,-20)*+{G}};
{\ar@{-} (5,-5)*{} ; (5,-2.5)};
{\ar@{-} (5,-5)*{} ; (2.5,-5)};
\endxy
$
\end{center}

{\noindent}and, moreover, if the canonical morphism $\pi_1 : h_{\mathbb{P} }
(Y) \longrightarrow h_{\mathbb{P} } (X)$ is induced by an \'etale dimap.


\begin{itheorem}
The following are equivalent:
\begin{enumerateroman}
  \item a sheaf $L \in \tmop{Sh} ( \mathbb{P}, \tau)$ is a local epo-space;
  
  \item there is a family
  \[ \left( \kappa_i : h_{\mathbb{P} } (U_i) \longrightarrow L \right)_{i \in
     I} \]
  of $\mathbb{P}$-local monos such that the canonical morphism
  \[ [\kappa_i]_{i \in I} : \coprod_{i \in I} h_{\mathbb{P} } (U_i)
     \longrightarrow L \]
  is an epi.
\end{enumerateroman}
\end{itheorem}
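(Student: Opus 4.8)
The plan is to prove the two implications separately, using full faithfulness of $h_{\mathbb{P}}$ (the embedding theorem above) to pass freely between dimaps and morphisms of sheaves, and exploiting two soft facts throughout: for any epo-space $Z$ the hom-functor $\mathbb{L}(Z,-)$ preserves whatever pullbacks exist in $\mathbb{L}$, while limits of sheaves are computed objectwise, so that $h_{\mathbb{P}}$ carries a pullback existing in $\mathbb{L}$ to the corresponding pullback in $\tmop{Sh}(\mathbb{P},\tau)$.

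For (i)$\Rightarrow$(ii), suppose $L = h_{\mathbb{P}}(\tilde{L})$ for a local epo-space $\tilde{L}$. I would take the chart embeddings $\phi_i : U_i \hookrightarrow \tilde{L}$ supplied by an atlas ($U_i$ epo-spaces) and set $\kappa_i := h_{\mathbb{P}}(\phi_i)$. Each $\phi_i$ is an injective open embedding, hence a mono in $\mathbb{L}$; since post-composition with a mono is injective on every hom-set, $\kappa_i$ is objectwise injective and therefore a mono of sheaves. For $\mathbb{P}$-locality, given $f = h_{\mathbb{P}}(g) : h_{\mathbb{P}}(X) \to L$, the preimage open subspace $Y := g^{-1}(\phi_i(U_i))$ is the pullback $X \times_{\tilde{L}} U_i$ in $\mathbb{L}$; by the objectwise remark $h_{\mathbb{P}}(Y)$ is the pullback of $\kappa_i$ along $f$, and $\pi_1 : Y \to X$, being the pullback of the \'etale $\phi_i$, is \'etale. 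Finally $[\kappa_i]$ is an epi because the $\phi_i$ cover $\tilde{L}$, which is a $\tau$-cover, so $[\kappa_i]$ is locally surjective.

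For (ii)$\Rightarrow$(i), the substantial direction, I would first form for each pair $(i,j)$ the pullback $h_{\mathbb{P}}(U_i) \times_L h_{\mathbb{P}}(U_j)$ in the topos. Applying $\mathbb{P}$-locality of $\kappa_j$ to $\kappa_i$ identifies it with a representable $h_{\mathbb{P}}(U_{ij})$ for which $\pi_1 : U_{ij} \to U_i$ is \'etale; symmetrically $\pi_2 : U_{ij} \to U_j$ is \'etale, and since $\kappa_i,\kappa_j$ are monos these \'etale dimaps are open embeddings. Thus the epo-spaces $U_i$ carry genuine gluing data along open subspaces, the triple-overlap cocycle identities coming from associativity of the iterated pullbacks $h_{\mathbb{P}}(U_i) \times_L h_{\mathbb{P}}(U_j) \times_L h_{\mathbb{P}}(U_k)$. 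Gluing the $U_i$ along the $U_{ij}$ then produces a space $\tilde{L}$ equipped with an atlas of epo-space charts and \'etale transitions, i.e. a local epo-space.

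It remains to identify $h_{\mathbb{P}}(\tilde{L})$ with $L$, and this is where I expect the main obstacle. Since $\tmop{Sh}(\mathbb{P},\tau)$ is exact, the jointly epic family of monos exhibits $L$ as the coequalizer of the kernel pair of $[\kappa_i]$, namely of $\coprod_{i,j} h_{\mathbb{P}}(U_{ij}) \rightrightarrows \coprod_i h_{\mathbb{P}}(U_i)$, whereas $\tilde{L}$ is by construction the colimit of the analogous diagram $\coprod_{i,j} U_{ij} \rightrightarrows \coprod_i U_i$ in $\mathbb{L}$. One cannot simply invoke cocontinuity of $h_{\mathbb{P}}$, precisely because $\mathbb{L}$ is badly behaved under general colimits; instead I would compare the two sheaves objectwise via the sheaf condition for $\tau$. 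A section of $L$ over an epo-space $Z$ is, by local surjectivity of $[\kappa_i]$ and the fact that $\tau$ is generated by open covers, a family of local lifts $Z_a \to U_{i(a)}$ agreeing on overlaps through the $U_{ij}$, which is exactly the data assembling into a dimap $Z \to \tilde{L}$; conversely every such dimap yields a section. Checking that this correspondence is natural in $Z$ gives $h_{\mathbb{P}}(\tilde{L}) \cong L$ and completes the proof.
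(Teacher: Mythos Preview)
Your proof is correct and follows the same overall strategy as the paper: for (i)$\Rightarrow$(ii) you take the chart inclusions and verify $\mathbb{P}$-locality via preservation of the relevant pullbacks by $h_{\mathbb{P}}$, exactly as the paper does; for (ii)$\Rightarrow$(i) you build the $U_{ij}$ as representable fibred products, observe that the projections are \'etale monos and hence open embeddings, glue to a local epo-space, and then identify the result with $L$. The only substantive difference is in this final identification. The paper interposes the sheaf-theoretic colimit $\mathrm{colim}_i\, h_{\mathbb{P}}(U_i)$ and shows in two separate steps that the comparison maps to $L$ and to $h_{\mathbb{P}}(\tilde L)$ are each both monic (as inclusions of unions of subobjects) and epic (by factoring the given epi $[\kappa_i]$, respectively by a local-surjectivity argument), concluding by balancedness of the topos. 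You instead compare $L(Z)$ with $\mathbb{L}(Z,\tilde L)$ directly by a descent argument along local lifts through the $U_i$. Both routes are valid; the paper's is terser once the union-of-subobjects machinery is in hand, while yours is more concrete but leaves more bookkeeping (well-definedness under refinement of the cover, independence of the chosen lifts) only sketched.
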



{\noindent}In section \ref{sec:intervals} we briefly review the material of
{\cite{cisinski-topos}} about interval-based model structures in Grothendieck
topoi. The weak equivalences of such model structures are given by
contravariant action on quotients of certain homsets while cofibrations are
always monos. It is in fact a (very) far-reaching generalization of the
classical work of Gabriel and Zisman {\cite{gz}}. We then build on this
material by introducing a natural notion of morphism of intervals. Given such
a morphism, there are in particular two model structures on the same topos,
induced by the source respectively the target interval. We investigate the
relationship between these model structures under additional hypotheses. Our
main observation can be summarized as follows.


\begin{itheorem}
Let $\mathcal{I}$ and $\mathcal{I}'$
be intervals in a topos and $\mathcal{W}_{\mathcal{I}}$ respectively
$\mathcal{W}_{\mathcal{I}'}$ be the classes of weak equivalences in the
induced model structures. Suppose $\iota : \mathcal{I} \longrightarrow
\mathcal{I}'$ is a morphism of intervals. Then
\[ \mathcal{W}_{\mathcal{I}} \subseteq \mathcal{W}_{\mathcal{I}'} \]
if $\iota$ is a section-wise $\mathcal{I}$-weak equivalence and
\[ \mathcal{W}_{\mathcal{I}'} \subseteq \mathcal{W}_{\mathcal{I}} \]
if $\iota$ is a section-wise $\mathcal{I}'$-weak
equivalence.
\end{itheorem}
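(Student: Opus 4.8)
The plan is to reduce everything to the description of the weak equivalences as those maps acting bijectively on homotopy classes with values in fibrant objects, and to track how the morphism of intervals $\iota$ compares the two homotopy relations. Writing $[A,B]_{\mathcal{I}}$ and $[A,B]_{\mathcal{I}'}$ for the quotients of $\mathrm{Hom}(A,B)$ by $\mathcal{I}$- resp.\ $\mathcal{I}'$-homotopy, a map $f$ lies in $\mathcal{W}_{\mathcal{I}}$ exactly when $[f,Z]_{\mathcal{I}}$ is a bijection for every $\mathcal{I}$-fibrant $Z$, and similarly for $\mathcal{W}_{\mathcal{I}'}$. The morphism $\iota$ induces a natural transformation of cylinders $\iota \otimes (-) \colon \mathcal{I} \otimes (-) \Rightarrow \mathcal{I}' \otimes (-)$ compatible with the two endpoint inclusions and with the contractions; precomposing an $\mathcal{I}'$-homotopy with $\iota \otimes A$ yields an $\mathcal{I}$-homotopy with the same endpoints, so $\mathcal{I}'$-homotopic maps are $\mathcal{I}$-homotopic and there is a natural quotient map $q_{A,B} \colon [A,B]_{\mathcal{I}'} \longrightarrow [A,B]_{\mathcal{I}}$. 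The whole difficulty is concentrated in deciding when $q$ is bijective on the relevant test objects, since the two structures are compared through different classes of fibrant objects.

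For the first inclusion I would fix $f \colon X \to Y$ in $\mathcal{W}_{\mathcal{I}}$ and an $\mathcal{I}'$-fibrant object $Z$, and consider the naturality square of $q$ in the first variable, whose two horizontal maps are $[f,Z]_{\mathcal{I}}$ and $[f,Z]_{\mathcal{I}'}$ and whose vertical maps are the quotients $q_{Y,Z}$ and $q_{X,Z}$. The bottom map $[f,Z]_{\mathcal{I}}$ is a bijection because $f \in \mathcal{W}_{\mathcal{I}}$, once $Z$ has been replaced by an $\mathcal{I}$-fibrant model and the two-out-of-three property of $\mathcal{W}_{\mathcal{I}}$ is used to reduce the test to genuinely $\mathcal{I}$-fibrant objects. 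The point where the hypothesis enters is to show that, when $\iota$ is a section-wise $\mathcal{I}$-weak equivalence, the comparison map $\iota \otimes A$ is an $\mathcal{I}$-weak equivalence between cylinders, so that mapping into the fibrant object $Z$ turns it into a bijection and forces $q_{X,Z}$ and $q_{Y,Z}$ to be bijective; the square then transports the bijectivity of $[f,Z]_{\mathcal{I}}$ to $[f,Z]_{\mathcal{I}'}$, which is what is needed to conclude $f \in \mathcal{W}_{\mathcal{I}'}$.

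The second inclusion follows by the same scheme with the roles of $\mathcal{I}$ and $\mathcal{I}'$ interchanged: starting from $f \in \mathcal{W}_{\mathcal{I}'}$ and an $\mathcal{I}$-fibrant test object, the hypothesis that $\iota$ is a section-wise $\mathcal{I}'$-weak equivalence is exactly what makes the relevant instances of $q$ bijective and lets one run the naturality square in the opposite direction. I would therefore organise both cases around a single comparison lemma asserting that, under the appropriate section-wise weak-equivalence hypothesis on $\iota$, the map $q_{A,Z}$ is a bijection for $Z$ in the corresponding class of fibrant objects.

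The main obstacle, and the step I would spend most care on, is precisely this comparison lemma: one must upgrade the section-wise statement about $\iota$ to a statement about homotopy classes of maps into fibrant objects \emph{respecting endpoints}, since the abstract bijection coming from $\iota \otimes A$ being a weak equivalence is only up to homotopy and does not a priori match the prescribed faces of a homotopy. Controlling this, together with the bookkeeping needed to align the $\mathcal{I}$-fibrant and $\mathcal{I}'$-fibrant objects against which the two classes of weak equivalences are tested, is where the real work lies; the surrounding argument is then a formal diagram chase using naturality and two-out-of-three.
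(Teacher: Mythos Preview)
There is a genuine problem, but it originates in the statement itself rather than in your instincts: the theorem as stated in the introduction has the two inclusions reversed. In the body of the paper the results actually proved are Proposition~\ref{prop:weaki1} (section-wise $\mathcal{I}$-weak equivalence $\Rightarrow \mathcal{W}_{\mathcal{I}'}\subseteq\mathcal{W}_{\mathcal{I}}$) and Proposition~\ref{prop:weaki2} (section-wise $\mathcal{I}'$-weak equivalence $\Rightarrow \mathcal{W}_{\mathcal{I}}\subseteq\mathcal{W}_{\mathcal{I}'}$), and these are the versions used in the application (Theorem~\ref{theo:dih-model-structure}). The paper even conjectures that $\mathcal{W}_{\mathcal{I}_d}\not\subseteq\mathcal{W}_{\mathcal{I}_D}$ in the presence of a morphism $\mathcal{I}_d\to\mathcal{I}_D$ which is section-wise $\mathcal{I}_d$-weakly equivalent, so the direction you are trying to prove is expected to be false.

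This explains why your argument stalls. Your key move for the ``first inclusion'' is to take an $\mathcal{I}'$-fibrant test object $Z$ and replace it by an $\mathcal{I}$-fibrant model in order to invoke $f\in\mathcal{W}_{\mathcal{I}}$; but that replacement is only an $\mathcal{I}$-weak equivalence, and to transport information about $[\,-\,,Z]_{\mathcal{I}'}$ across it you would need it to be an $\mathcal{I}'$-weak equivalence, i.e.\ you would need the very inclusion you are trying to establish. The fibrant objects in fact go the \emph{other} way: under the hypothesis, every $\mathcal{I}$-fibrant object is $\mathcal{I}'$-fibrant, not conversely.

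The paper's route is quite different from a direct manipulation of homotopy classes. It proves (Lemma~\ref{lem:weaki1}) by induction on $n$ that the generating set $\Lambda_{\mathcal{I}'}$ of $\mathcal{I}'$-anodyne extensions lies inside the $\mathcal{I}$-trivial cofibrations: for $\Lambda^0_{\mathcal{I}'}$ one uses $\partial^e_{\mathcal{I}',B}=\iota_B\circ\partial^e_{\mathcal{I},B}$ together with $2$-out-of-$3$ and a pushout argument; the inductive step uses the same pushout-union description of $\Lambda^{n}_{\mathcal{I}'}$. From $\Lambda_{\mathcal{I}'}\subseteq\mathcal{C}_{\mathcal{I}}\cap\mathcal{W}_{\mathcal{I}}$ one reads off $\mathcal{E}^{\mathrm{nf}}_{\mathcal{I}}\subseteq\mathcal{E}^{\mathrm{nf}}_{\mathcal{I}'}$ (an $\mathcal{I}$-fibration has the right lifting property against all $\mathcal{I}$-trivial cofibrations, hence against $\Lambda_{\mathcal{I}'}$), and then concludes $\mathcal{W}_{\mathcal{I}'}\subseteq\mathcal{W}_{\mathcal{I}}$. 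Your comparison-of-homotopy-relations idea is in fact what justifies this last implication cleanly: since $\sigma_X=\sigma'_X\circ\iota_X$ and both $\sigma_X$ and $\iota_X$ are $\mathcal{I}$-weak equivalences, $I'(X)$ is also a good cylinder in the $\mathcal{I}$-model structure, so for $Z$ $\mathcal{I}$-fibrant the relations $\sim_{\mathcal{I}}$ and $\sim_{\mathcal{I}'}$ on $\mathrm{Hom}(X,Z)$ coincide; combined with $\mathcal{E}^{\mathrm{nf}}_{\mathcal{I}}\subseteq\mathcal{E}^{\mathrm{nf}}_{\mathcal{I}'}$ this gives exactly $\mathcal{W}_{\mathcal{I}'}\subseteq\mathcal{W}_{\mathcal{I}}$. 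So your ``comparison lemma'' is salvageable, but only once aimed at the correct inclusion and paired with the paper's fibrant-object containment.
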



In section \ref{sec:diho}, we apply this machinery to compare the homotopy
theories given by the two mentioned notions of directed homotopy.


\begin{itheorem}
Let $\mathcal{I}_d$ be the interval
in $\tmop{Sh} ( \mathbb{P}, \tau)$ given by the discrete order on $[0, 1]$ and
$\mathcal{I}_D$ be the interval in $\tmop{Sh} ( \mathbb{P}, \tau)$ given by
the natural order on $[0, 1]$. Then $\mathcal{W}_{\mathcal{I}_D} \subseteq
\mathcal{W}_{\mathcal{I}_d}$ and
\[ \tmop{id} : \left( \tmop{Sh} ( \mathbb{P}, \tau), \mathcal{I}_D \right)
   \longrightarrow \left( \tmop{Sh} ( \mathbb{P}, \tau), \mathcal{I}_d \right)
   \]
is a left Quillen functor.
\end{itheorem}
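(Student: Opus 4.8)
The plan is to deduce both assertions from the interval comparison theorem of the previous section, applied to a single canonical morphism of intervals. Concretely, I would produce a morphism $\iota : \mathcal{I}_d \longrightarrow \mathcal{I}_D$ and verify that it is a section-wise $\mathcal{I}_D$-weak equivalence; the second implication of that theorem, taken with $\mathcal{I} = \mathcal{I}_d$ and $\mathcal{I}' = \mathcal{I}_D$, then yields exactly $\mathcal{W}_{\mathcal{I}_D} \subseteq \mathcal{W}_{\mathcal{I}_d}$.

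First I would exhibit $\iota$. Both $\mathcal{I}_d$ and $\mathcal{I}_D$ have the same underlying topological interval $[0,1]$ and the same two endpoints, so the identity on the underlying space is the natural candidate. One checks that it is a dimap in the right direction: a map is non-decreasing into the discrete order only when it is constant on comparable pairs, whereas every such map is automatically non-decreasing into the natural order; hence the identity is a dimap $[0,1]_{\mathrm{disc}} \to [0,1]_{\mathrm{nat}}$ but not conversely. After applying the embedding $h_{\mathbb{P}}$ this gives a morphism of sheaves $\iota : \mathcal{I}_d \to \mathcal{I}_D$, and since it fixes $0$ and $1$ and is compatible with the remaining interval structure it is a morphism of intervals. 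The direction is forced, and it is precisely the one for which the second implication of the comparison theorem applies.

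The main obstacle is the middle step: showing that $\iota$ is a section-wise $\mathcal{I}_D$-weak equivalence. This cannot be done globally, since there is no directed homotopy contracting the discretely ordered interval along the natural-order cylinder, so the argument must be carried out section by section over the epo-spaces $U$ of the site. Over such a $U$ the map $\iota_U$ is the inclusion of the dimaps $U \to [0,1]_{\mathrm{disc}}$ into the dimaps $U \to [0,1]_{\mathrm{nat}}$, and I would show that this inclusion is an $\mathcal{I}_D$-weak equivalence by exhibiting an $\mathcal{I}_D$-homotopy inverse section-wise. The key input is that $[0,1]_{\mathrm{nat}}$ is $\mathcal{I}_D$-contractible via the directed homotopy $(s,x) \mapsto x(1-s)+s$ sliding the interval onto its top point, together with the fact that on a single chart the order is sufficiently connected that every section of $\mathcal{I}_D$ over $U$ is $\mathcal{I}_D$-homotopic, rel endpoints, to a section in the image of $\iota_U$; this gives surjectivity on $\mathcal{I}_D$-homotopy classes, and injectivity is handled by the analogous contraction. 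Making precise the interplay between the sheaf condition, the chart-wise orders, and Cisinski's description of the weak equivalences as bijections on $\mathcal{I}_D$-homotopy-quotiented homsets is where the real work lies.

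Finally, the left Quillen statement follows formally. In any interval-based model structure the cofibrations are the monomorphisms, so the identity functor preserves cofibrations between the two structures on $\tmop{Sh}(\mathbb{P},\tau)$ trivially. A trivial cofibration for $\mathcal{I}_D$ is a mono lying in $\mathcal{W}_{\mathcal{I}_D}$, hence a mono lying in $\mathcal{W}_{\mathcal{I}_d}$ by the inclusion just established, i.e.\ a trivial cofibration for $\mathcal{I}_d$; thus the identity also preserves trivial cofibrations. Since the identity is its own adjoint, these two facts say precisely that $\tmop{id} : (\tmop{Sh}(\mathbb{P},\tau), \mathcal{I}_D) \to (\tmop{Sh}(\mathbb{P},\tau), \mathcal{I}_d)$ is a left Quillen functor.
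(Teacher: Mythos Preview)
Your proposal has the comparison running in the wrong direction. You plan to show that $\iota:\mathcal{I}_d\to\mathcal{I}_D$ is component-wise an $\mathcal{I}_D$-weak equivalence and then invoke the ``second implication'' of the interval-comparison theorem. But with $\mathcal{I}=\mathcal{I}_d$ and $\mathcal{I}'=\mathcal{I}_D$, the hypothesis ``$\iota$ is a section-wise $\mathcal{I}'$-weak equivalence'' is exactly that of Proposition~\ref{prop:weaki2}, whose conclusion is $\mathcal{W}_{\mathcal{I}}\subseteq\mathcal{W}_{\mathcal{I}'}$, i.e.\ $\mathcal{W}_{\mathcal{I}_d}\subseteq\mathcal{W}_{\mathcal{I}_D}$, the opposite of what is claimed. (Beware that the summary theorem in the Introduction has the two implications interchanged; Propositions~\ref{prop:weaki1} and~\ref{prop:weaki2} are the statements actually proved and used.) The paper's argument goes via Proposition~\ref{prop:weaki1}: one shows that $\iota$ is component-wise an $\mathcal{I}_d$-weak equivalence, which yields $\mathcal{W}_{\mathcal{I}_D}\subseteq\mathcal{W}_{\mathcal{I}_d}$ directly.

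Your middle step is also not workable as sketched. First, ``section-wise'' here means that each component $\iota_F=\mathrm{id}_F\times i_\ast : F\times h_{\mathbb P}(\Delta_d)\to F\times h_{\mathbb P}(\Delta_D)$ of the natural transformation is a weak equivalence in the topos; it is not an argument carried out on the sets $\mathbb L(U,\Delta_d)\hookrightarrow\mathbb L(U,\Delta_D)$. Second, and more seriously, the route you propose would need $h_{\mathbb P}(\Delta_d)$ to be $\mathcal{I}_D$-contractible, but any dimap $\Delta_d\times\Delta_D\to\Delta_d$ must be constant in the $\Delta_D$-variable (the target carries the discrete order, so the map is forced to collapse every chain), hence no contracting $\mathcal{I}_D$-homotopy exists; the paper's closing remark records precisely this obstruction. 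By contrast, the paper's choice only needs $h_{\mathbb P}(\Delta_D)$ to be $\mathcal{I}_d$-contractible, and pointwise multiplication $(f,g)\mapsto f\cdot g:\Delta_D\times\Delta_d\to\Delta_D$ supplies the contracting $\mathcal{I}_d$-homotopy (Proposition~\ref{prop:obang} and Lemma~\ref{lem:itoiprime}). Your final paragraph on the left Quillen property is correct once the inclusion of weak equivalences is in hand.
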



{\noindent}The author acknowledges Peter Bubenik, Eric Goubault, Andr\'e Joyal, Sanjeevi Krishnan 
and Krzysztof Ziemia\'nski for inspiring discussions. He thanks the anonymous referee for pointing
out an error and suggesting a fix, as well as for the general helpful comments and suggestions. 
The author's debt to Rick Jardine
deserves a sentence of its own. The author wishes to thank the MSRI and the
Fields Institute for their hospitality. This research was performed by far and
large at these two institutions where the author was detached from the
Department of Mathematics at the University of Western Ontario. Finally, the author wishes
to thank AGH's Department of Computer Science for support with respect to the Polonium grant.

{\tableofcontents}

\setlength \parindent {0cm}

\section{Locally ordered spaces \label{sec:locored}}

\subsection{\label{no:order-atlases}Atlases}

\begin{definition}
  \label{def:po-space}An {\tmem{epo-space}} is a pair $\left( U, \preccurlyeq
  \right)$ where
  \begin{itemizeminus}
    \item  $U$ is a topological space homeomorphic to an open set of the upper
    half-space
    \[ \mathbbm{H}^n  \overset{\tmop{def} .}{=} \{(x_1, \ldots, x_n) \in
       \mathbb{R}^n | x_n \geqslant 0\} \]
    for some $n \in \mathbb{N}$;
    
    \item $\preccurlyeq \subseteq U \times U$ is a partial order on $U$ which
    is closed in the product topology.
  \end{itemizeminus}
\end{definition}

``Epo-space'' stands for ``elementary partially-ordered space''. The notion of
epo-space is a particular case of the notion of {\tmem{po-space}}
{\cite{FRG}}.


\begin{notation}
  \label{def:epo-space}An epo-space $\left( U, \preccurlyeq \right)$
  shall be denoted $U$ if the order is understood from context.
\end{notation}


\begin{definition}
  Let $X$ be a topological space.
  \begin{enumeratenumeric}
    \item A {\tmem{chart}} on $X$ is an open subset $U \subseteq X$ which is
    an \ epo-space.
    
    \item Charts $U$ and $U'$ on $X$ are {\tmem{compatible}} if
    \[ \preccurlyeq^U_{|_{U \cap U'}} = \preccurlyeq^{U'}_{|_{U \cap U'}} \]
    \item An atlas on $X$ is an open covering $\left( U_i \right)_{i \in I}$
    of $X$ such that
    \begin{enumerateroman}
      \item $U_i$ is a chart for each $i \in I$;
      
      \item $U_i$ and $U_j$ are compatible for each $(i, j) \in I \times I$.
    \end{enumerateroman}
  \end{enumeratenumeric}
\end{definition}


\begin{notation}
We write $U \asymp V$ when $U$ and
$V$ are compatible charts. $\tmop{At} (X)$ stands for the collection of
atlases on $X$.
\end{notation}


\subsection{Local epo-spaces}

\begin{definition}
  A {\tmem{local epo-space}} $\left( X, \left( U_i \right) \right)$ consists
  of a topological space $X$ and an atlas $\left( U_i \right)$. 
\end{definition}

\begin{definition}
  A continuous map $f : X \longrightarrow Y$ is locally non-decreasing with
  respect to atlases $\left( U_i \right) \in \tmop{At} (X)$ and to $\left( V_j
  \right) \in \tmop{At} (Y)$ if
  \[ f_{|_{f^{- 1} (V_j) \cap U_i}} : f^{- 1} (V_j) \cap U_i \longrightarrow
     V_j \]
  is non-decreasing for all $(i, j) \in I \times J$.
\end{definition}

\begin{definition}
  Let $\left( X, \left( U_i \right) \right)$ and $\left( Y, \left( V_j \right)
  \right)$ be local epo-spaces. A {\tmem{dimap}} $f : \left( X, \left( U_i
  \right) \right) \rightarrow \text{$\left( Y, \left( V_j \right) \right)$}$
  is a locally non-decreasing continous map.
\end{definition}

\begin{remark}
  An epo-space is a local epo-space equipped with a one-chart atlas. A dimap
  among epo-spaces is just a continuous non-decreasing map. Notice also, that
  the underlying topological space of a local epo-space is a topological
  manifold with boundary.
\end{remark}

\begin{example} \label{exa:1}
  The unit interval $ \left[ 0, 1 \right] \subset \mathbb{R}$ is a manifold
  with non-empty boundary. The discrete order on $[0, 1]$ gives rise the to
  epo-space $\Delta_d$ while the natural order produces the epo-space
  $\Delta_D$.
\end{example}

\begin{example}
  Consider the unit circle $S^1$ and let
  \[ C_{\varepsilon, \varphi}  \overset{\tmop{def} .}{=} \{e^{i \theta} |
     \varphi - \varepsilon \leqslant \theta \leqslant \varphi + \varepsilon\}
  \]
  Then $\left( C_{\pi / 2, \pi / 2}, C_{3 \pi / 2, \pi / 2}  \right)$ is an
  atlas on $S^1$, with the order on the charts being (say) counterclockwise.
  The corresponding local epo-space is not an epo-space.
\end{example}

\subsection{The category of local epo-spaces}


\begin{notation}
  The following categories are of
particular interest:
\begin{itemizeminus}
  \item $\mathbbm{P}$, the category of epo-spaces and dimaps;
  
  \item $\mathbbm{L}$, the category of local epo-spaces and dimaps;
  
  \item $\tmmathbf{\tmop{Man}}$, the category of topological manifolds with
  boundary and continuous maps.
\end{itemizeminus}
\end{notation}


\begin{proposition}
  There is an adjunction $F \dashv U : \tmmathbf{\tmop{Man}} \longrightarrow
  \mathbb{L}$. The forgetful functor $U : \mathbb{L} \longrightarrow
  \tmmathbf{\tmop{Man}}$ preserves and creates subobjects and limits.
\end{proposition}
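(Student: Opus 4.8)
The plan is to take $F$ to be the \tmem{discrete-order} functor, to deduce the preservation statements from general adjunction calculus, and to produce the created structures by hand.

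First I would construct $F$. Given a topological manifold with boundary $M$, choose its maximal atlas of charts modelled on open subsets of $\mathbb{H}^n$ and equip every chart with the discrete order $\{(x,x)\}$. This order is the diagonal, hence closed in the product topology (manifolds are Hausdorff), so each chart becomes an epo-space; any two charts are compatible because the discrete order restricts to the discrete order on the overlap. Thus $F(M)$ is a well-defined local epo-space with underlying space $M$, and $F$ is clearly functorial. I would then establish the hom-bijection $\mathbb{L}(F(M),X)\cong \tmmathbf{\tmop{Man}}(M,U(X))$. A dimap $F(M)\to X$ is a continuous locally non-decreasing map, but the non-decreasing condition on a chart of $F(M)$ is vacuous: since $x\preccurlyeq y$ on a discretely ordered chart forces $x=y$, the constraint $f(x)\preccurlyeq f(x)$ holds by reflexivity. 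Hence a dimap $F(M)\to X$ is exactly a continuous map $M\to U(X)$, and this bijection is natural in $M$ and $X$. This gives the adjunction $F\dashv U$, and since $U$ is a right adjoint it automatically preserves all limits that exist in $\mathbb{L}$, and in particular preserves monos, hence subobjects.

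For the \emph{creation} of limits and subobjects I would lift structures explicitly. Let $D\colon J\to\mathbb{L}$ be a diagram whose underlying diagram $UD$ admits a limit $L=\lim UD$ in $\tmmathbf{\tmop{Man}}$, with projections $p_k\colon L\to UD_k$. On $L$ define $x\preccurlyeq y$ iff $p_k(x)\preccurlyeq p_k(y)$ for all $k$; this is the intersection of the closed relations $(p_k\times p_k)^{-1}(\preccurlyeq^{D_k})$, hence closed, and it is a partial order, antisymmetry coming from the fact that the projections of a limit cone are jointly monic. Restricting this order to the charts of the manifold atlas of $L$ exhibits $L$ as a local epo-space $\tilde L$ over $L$. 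One then checks that a dimap $Y\to\tilde L$ is the same datum as a cone of dimaps $(Y\to D_k)_k$, precisely because the order on $\tilde L$ was chosen to make all $p_k$ non-decreasing and to detect non-decreasingness componentwise; hence $\tilde L$ is the limit in $\mathbb{L}$ and is the unique lift over $L$, so $U$ creates the limit. The subobject case is the special instance where one restricts the atlas and the order of $X$ along a submanifold $S\hookrightarrow U(X)$: the restricted order is closed, the restricted charts are compatible, and this is the unique local epo-space structure turning the inclusion into a subobject lying over $S$.

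The main obstacle is bookkeeping rather than conceptual. Because $\tmmathbf{\tmop{Man}}$ is not complete—products introduce corners and equalizers of continuous maps need not be submanifolds—the phrase ``creates limits'' must be read relative to those diagrams whose underlying limit already exists in $\tmmathbf{\tmop{Man}}$; the work is then to verify that the induced relation is genuinely a closed partial order (antisymmetry being the one point that uses the limit, via joint monicity of the projections) and that compatible charts can be assembled so that the lifted object really lies in $\mathbb{L}$. Once these verifications are in place, preservation of limits and subobjects follows from the adjunction together with the explicit construction of the created objects.
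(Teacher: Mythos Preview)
Your construction of $F$ via the discrete order and the resulting hom-set bijection is correct, and in fact the paper's own proof does not spell out the adjunction at all---it jumps straight to the creation of subobjects and limits. Using $F\dashv U$ to get preservation of limits and monos for free is a clean shortcut the paper does not make explicit.

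The gap is in your creation of general limits. You write ``on $L$ define $x\preccurlyeq y$ iff $p_k(x)\preccurlyeq p_k(y)$ for all $k$'', treating $\preccurlyeq^{D_k}$ as a relation on all of $UD_k$. But the $D_k$ are \emph{local} epo-spaces: they carry partial orders only on the charts of an atlas, and these need not glue to a global partial order on the underlying space. The directed circle (the paper's second example) is the standard obstruction---there the union of the chart orders is not antisymmetric. So the expression $(p_k\times p_k)^{-1}(\preccurlyeq^{D_k})$ is not well-defined as stated, and your antisymmetry argument via joint monicity of the limit projections does not go through, since it presupposes that each $\preccurlyeq^{D_k}$ is already antisymmetric globally.

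The paper avoids this by never attempting a global order on the limit. It treats products and equalizers separately: for a product $\prod_t X^t$ the atlas is the family $\bigl(\prod_t U^t_i\bigr)$ of products of charts, each equipped with the product order; for a subobject $X'\rightarrowtail X$ the atlas is $(X'\cap U_i)$ with the restricted orders (this is essentially what you do in your subobject paragraph, which is fine). General limits then follow from products plus equalizers. Your argument is repairable along exactly these lines---build the atlas on $L$ out of the atlases of the $D_k$ and define the order chart-by-chart---but as written the step ``define $\preccurlyeq$ on $L$'' does not typecheck in $\mathbb{L}$.
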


\begin{proof}
  Consider a local epo-space $\left( X, \left( U_i \right) \right)$ and the
  inclusion map of topological manifolds with boundary $i : X' \rightarrowtail
  X$. Then $(X' \cap U_i)_{i \in I}$ gives is an atlas on $X'$ with
  respect to the subspace topology and $i$ is a dimap
  \[ i : (X', (X' \cap U_i)) \rightarrowtail (X, \left( U_i \right)) \]
  The converse statement is trivial. In particular, $U$ preserves and creates
  equalizers. As for products, let $\left( X^t, \left( U_i^t \right)_{i \in I
  (t)} \right)_{t \in T}$ be a family of local epo-spaces. Then

  \[ \left( \prod U^t_i  \right)_{t \in T, i \in I (t)} \]

  is an atlas on $\prod_{t \in T} X^t$ and
  \[ \left( \prod_{t \in T} X^t, \left( \prod U^t_i  \right)_{t \in T, i \in I
     (t)} \right) \]

  is a product. The converse statement is again trivial.
\end{proof}

Sums exist in $\mathbb{L}$ and are calculated the usual way. On the other
hand, coequalizers are somehow elusive. We do not know if $\mathbb{L}$ admits
them, yet if it should be the case then they are not created by $U$ for the
following reason. Suppose $(X, \left( U_i \right)) \in \mathbb{L}$ is a local epo-space and let $X'
\subseteq X$ be a subspace of the underlying topological space $X$. It is certainly the case that $\left( U_i / U_i
\cap X' \right)_{i \in I}$ is an open covering $X / X'$ with respect to the
quotient topology. However, quotients of partial orders are preorders which
are not necessarily antisymmetric. Consider for instance $\Delta_D$ as in
example \ref{exa:1} and $\{0, 1\}$ equipped with the discrete order. Passing to the quotient we get
\[ \Delta_D /\{0, 1\} \cong S^1 \]
as topological spaces, yet the order relation becomes a preorder which is not
an order. Nonetheless, an important type of
colimits do exist in $\mathbb{L}$.

\begin{proposition}
  \label{prop:lpc-colim}Let $\left( X, \left( U_i \right) \right) \in
  \mathbb{L}$. The family 
\[\left\{ \left( u_i : U_i \rightarrowtail X
  \right)_{i \in I}, \left( u_{i j} : U_{i j} \rightarrowtail X \right)_{(i,
  j) \in I \times I} \right\}\]
 of dimaps in


\begin{center}
$
\xy
\ar@{>->}_{\pi_i^{ij}} (25,0)*++{U_{ij}} ; (10,-15)*+{U_i}
\ar@{>->}^{\pi_j^{ij}} (25,0)*++{U_{ij}} ; (40,-15)*+{U_j}
\ar@{>->}_{u_i} (10,-15)*++{U_i} ; (25,-30)*+{X}
\ar@{>->}^{u_j} (40,-15)*++{U_j} ; (25,-30)*+{X}
\ar@{-} (25,-6.45)*{} ; (23,-5)
\ar@{-} (25,-6.45)*{} ; (26.5,-5)
\POS(3,-15)*{\cdots}
\POS(47,-15)*{\cdots}
\endxy
$
\end{center}

  is a colimit of the diagram $\left( \pi_i^{i j} : U_{i j}
  \rightarrow U_i, \pi_j^{i j} : U_{i j} \rightarrow U_j \right)_{(i, j) \in I
  \times I}$.
\end{proposition}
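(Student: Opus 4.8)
The plan is to verify the universal property of the colimit directly, treating the diagram as an open cover of $X$ together with its pairwise intersections. The intuition is that $X$ is glued together from its charts $U_i$ along the overlaps $U_{ij} = U_i \cap U_j$, so I expect $X$ to be precisely the colimit of this diagram in $\mathbb{L}$. Concretely, I would set up a cocone: suppose $\left(Z, \left(W_k\right)\right) \in \mathbb{L}$ is a test object equipped with dimaps $g_i : U_i \longrightarrow Z$ satisfying the compatibility conditions $g_i \circ \pi_i^{ij} = g_j \circ \pi_j^{ij}$ for all $(i,j) \in I \times I$. I must produce a unique dimap $g : X \longrightarrow Z$ with $g \circ u_i = g_i$ for every $i$, where the $u_i : U_i \rightarrowtail X$ are the inclusions.

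First I would construct $g$ at the level of the underlying topological spaces. Since the $U_i$ form an open cover of $X$ and the maps $g_i$ agree on the overlaps $U_{ij}$ (this is exactly what the compatibility condition says, because $\pi_i^{ij}$ and $\pi_j^{ij}$ are the two inclusions of $U_{ij}$ into $U_i$ and $U_j$), the usual gluing lemma for continuous maps on an open cover gives a unique continuous map $g : X \longrightarrow Z$ with $g|_{U_i} = g_i$. Uniqueness at the level of sets, hence at the level of continuous maps, is immediate since the $U_i$ cover $X$. This handles the topological part and the uniqueness clause simultaneously.

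Next I would check that this glued map $g$ is in fact a dimap, i.e.\ locally non-decreasing with respect to the atlas $\left(U_i\right)$ on $X$ and the atlas $\left(W_k\right)$ on $Z$. This is a local condition: to test whether $g$ is non-decreasing on $g^{-1}(W_k) \cap U_i$, one only looks inside a single chart $U_i$, where $g$ restricts to $g_i$, which is a dimap by hypothesis. Because the charts of an atlas are pairwise compatible (their orders agree on intersections, by the definition of atlas recalled in the excerpt), the order on $X$ restricted to each $U_i$ is exactly $\preccurlyeq^{U_i}$, so local non-decreasingness of $g$ on $U_i$ is literally local non-decreasingness of $g_i$. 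Hence $g$ is a dimap. I would also remark that the inclusions $u_i$ and $u_{ij}$ are themselves dimaps — this follows from compatibility of charts, just as in the proof of the preceding proposition — so that the stated family genuinely forms a cocone in $\mathbb{L}$.

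The step I expect to require the most care is making precise the bookkeeping of the diagram: the indexing family includes both the $u_i$ and the $u_{ij}$, and one must confirm that the compatibility conditions imposed on a cocone are exactly the equalities along the two projections $\pi_i^{ij}, \pi_j^{ij}$, with no further relations needed. In particular, one should note that the diagonal terms $U_{ii} = U_i$ and the symmetry $U_{ij} = U_{ji}$ do not introduce extra constraints beyond agreement on overlaps. Once this is set up cleanly, the verification that the gluing lemma applies and that the result is a dimap is routine, and the universal property follows. The essential content is simply that objects of $\mathbb{L}$ are by definition covered by their charts and that dimaps are detected locally on any atlas.
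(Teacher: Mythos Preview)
Your proposal is correct and follows essentially the same route as the paper: the paper first observes that the family is already a colimit in $\mathbf{Man}$ (which is exactly your gluing-lemma step for the underlying continuous maps), then checks that the canonical comparison map is locally non-decreasing via the identity $c|_{c^{-1}(V_j)\cap U_i} = t_i|_{t_i^{-1}(V_j)\cap U_i}$, which is precisely your verification that $g$ is a dimap by restricting to charts. Your additional bookkeeping remarks about the diagonal and symmetric indices are harmless elaborations the paper leaves implicit.
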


\begin{proof}
  The family $\left\{ \left( u_i : U_i \rightarrowtail X \right)_{i \in I},
  \left( u_{i j} : U_{i j} \rightarrowtail X \right)_{(i, j) \in I \times I}
  \right\}$ is a colimit of the diagram $\left( \pi_i^{i j} : U_{i j}
  \rightarrow U_i, \pi_j^{i j} : U_{i j} \rightarrow U_j \right)_{(i, j) \in I
  \times I}$ in $\tmmathbf{\tmop{Man}}$. Suppose $\left( Y,
  \left( V_j \right) \right) \in \mathbb{L}$ and let $\left( t_i : U_i
  \longrightarrow Y \right)_{i \in I}$ be a family of dimaps such that
  \[ t_i \circ \pi^{i j}_i = t_j \circ \pi^{i j}_j  \overset{\tmop{def} .}{=}
     t_{i j}  \]
  There is the canonical map $c : X \longrightarrow Y$

  
  \begin{center}

$
\xy
\ar@{>->}_{\pi_i^{ij}} (25,0)*++{U_{ij}} ; (10,-15)*+{U_i}
\ar@{>->}^{\pi_j^{ij}} (25,0)*++{U_{ij}} ; (40,-15)*+{U_j}
\ar@{>->}_{u_i} (10,-15)*++{U_i} ; (25,-30)*+{X}
\ar@{>->}^{u_j} (40,-15)*++{U_j} ; (25,-30)*+{X}
\ar@{-} (25,-6.45)*{} ; (23,-5)
\ar@{-} (25,-6.45)*{} ; (26.5,-5)
\ar_{t_i} @/_3ex/ (10,-15)*++{U_i} ; (25,-45)*+{Y}
\ar^{t_j} @/^3ex/ (40,-15)*++{U_j} ; (25,-45)*+{Y}
\ar@{.>}_c (25,-30)*+{X} ; (25,-45)*+{Y}
\POS(3,-15)*{\cdots}
\endxy
$
\end{center}
  
  in $\tmmathbf{\tmop{Man}}$. This map is locally non-decreasing
  since
  \[ c_{|_{c^{- 1} (V_j) \cap U_i}} = (c \circ u_i)_{|_{(c \circ u_i)^{- 1}
     (V_j) \cap U_i}} = t_i{ _{|_{t_i^{- 1} (V_j) \cap U_i}}} \]
  for all $(i, j) \in I \times I$. 
\end{proof}

\begin{remark}
  \label{rem:colim-coeq}More succinctly, $X$ can be calculated as the coequalizer
  \[ \coprod_{i, j \in I^2} U_{\tmop{ij}} \rightrightarrows \coprod_{i \in I}
     U_i \twoheadrightarrow X \]
\end{remark}

\begin{remark} \label{rem:f-comp}
  Let $f : \left( X, \left( U_i \right) \right) \longrightarrow \left( Y,
  \left( W_k \right) \right)$ be a dimap. For each $i \in I$ there is a
  commuting triangle

  
  \begin{center}
$
\xy
\ar@{>->}^{u_i} (0,0)*+++{U_i} ; (15,0)*+{X}
\ar^{f} (15,0)*+{X} ; (15,-20)*+{Y}
\ar_{f_{|_{U_i}}} (0,0)*+{U_i} ; (15,-20)*+{Y}
\endxy
$
\end{center}
  
  in $\mathbb{L}$. Hence
  \[ \text{$f = \left[ f_{|_{U_i}} \right]_{i \in I}$} \]
  is the comparison morphism.
\end{remark}

\section{\label{sec:sheaves}Locally ordered spaces as
sheaves\label{sec:losheaves}}

In this section, we exhibit $\mathbb{L}$ as a subcategory of a topos of
sheaves by appropriately restricting a family of Yoneda embeddings.

\subsection{The open-dicover topology}

\begin{remark}
  The assignment
  \[ \tau (X) \overset{\tmop{def} .}{=}  \left\{ \left( X_i \right)_{\in I} |
     \forall i \in I . X_i \subseteq X \tmop{and} \bigcup_{i \in I} X_i = X
     \right\} \]
  determines (a basis of) a Grothendieck topology on $\mathbb{P}$, called the
  {\tmem{open-dicover topology}} (see {\cite{buwo}}). 
\end{remark}

\begin{proposition}
  The site $\left( \mathbb{P}, \tau \right)$ is subcanonical.
\end{proposition}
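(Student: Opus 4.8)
The plan is to show that every representable presheaf $h_Y = \mathbb{P}(-, Y)$ is a sheaf for $\tau$, which is exactly the assertion that $(\mathbb{P}, \tau)$ is subcanonical. Since $\tau$ is given by a basis of open covers, it suffices to check the sheaf condition against a family $(X_i \rightarrowtail X)_{i \in I}$ with $\bigcup_i X_i = X$. First I would identify the pullbacks occurring in the descent diagram: the inclusions $X_i \rightarrowtail X$ are monos, and since the forgetful functor $U : \mathbb{L} \longrightarrow \tmmathbf{\tmop{Man}}$ creates limits, the pullback $X_i \times_X X_j$ is the intersection $X_i \cap X_j$ endowed with the order restricted from $X$. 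Thus a matching family for $h_Y$ is precisely a family of dimaps $f_i : X_i \longrightarrow Y$ with $f_i|_{X_i \cap X_j} = f_j|_{X_i \cap X_j}$ for all $i, j$. Separatedness is immediate: two dimaps $X \longrightarrow Y$ agreeing on every $X_i$ agree pointwise because the $X_i$ jointly cover $X$.

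The content is therefore the gluing clause. I would first glue the underlying continuous maps: because continuous maps into $Y$ form a sheaf on the topological space $X$, the matching family $(f_i)$ determines a unique continuous map $f : X \longrightarrow Y$ with $f|_{X_i} = f_i$ for all $i$. It remains to verify that this $f$ is a morphism of $\mathbb{P}$, that is, that $f$ is non-decreasing from $(X, \preccurlyeq^X)$ to $(Y, \preccurlyeq^Y)$; uniqueness is then inherited from the uniqueness at the level of continuous maps.

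The main obstacle is exactly this last step, because non-decreasingness is not a priori a local condition: knowing that each restriction $f|_{X_i}$ preserves the order on the chart $X_i$ controls only those comparable pairs that happen to lie in a common chart, whereas $\preccurlyeq^X$ may relate points spread across several charts. My plan to bridge this gap is a chaining argument. Given $x \preccurlyeq^X x'$, I would produce a finite chain $x = z_0 \preccurlyeq^X z_1 \preccurlyeq^X \cdots \preccurlyeq^X z_n = x'$ in which each consecutive pair $\{z_{k-1}, z_k\}$ lies in a single chart $X_{i_k}$; then $f(z_{k-1}) \preccurlyeq^Y f(z_k)$ since $f_{i_k}$ is non-decreasing, and transitivity of $\preccurlyeq^Y$ yields $f(x) \preccurlyeq^Y f(x')$. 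To build such a chain I would work inside the order interval $\langle x, x' \rangle = \{ z \in X : x \preccurlyeq^X z \preccurlyeq^X x' \}$, which is closed because $\preccurlyeq^X$ is a closed relation, and exploit the local Euclidean structure together with the connectivity of the relevant piece of $X$ and a Lebesgue-number/compactness argument to interpolate comparable points lying in overlapping charts. I expect the delicate part to be guaranteeing that the interpolating points can be chosen \emph{monotonically}, so that consecutive ones remain comparable within a chart; this is where the geometry of epo-spaces and the shape of the admissible covers must really be used, and it is the step I would spend the most care on.
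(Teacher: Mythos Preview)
Your overall architecture matches the paper's: reduce to the gluing clause, patch the underlying continuous maps using the ordinary sheaf property of continuous functions, and then argue that the glued map is order-preserving. The paper carries out exactly these steps but disposes of the last one in a single sentence (``Since the order on the $U_i$'s is the one inherited from $U$, $f$ is non-decreasing''), whereas you correctly isolate it as the only point with content.

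The chaining strategy you outline, however, cannot be completed in the generality required. A closed partial order on an open subset of $\mathbb{H}^n$ may have order intervals $\langle x, x' \rangle$ reduced to the two endpoints, so there need be no intermediate comparable points to interpolate through. Concretely, take $U=(0,1)^2$ and declare $(a,b)\preccurlyeq(c,d)$ iff the two points coincide or $a\le 0.3$, $c\ge 0.7$ and $b=d$; this relation is closed, reflexive, antisymmetric and transitive, so $U$ is an epo-space. Cover $U$ by $U_1=(0,0.6)\times(0,1)$ and $U_2=(0.4,1)\times(0,1)$. The order inherited by each $U_i$ is discrete, hence \emph{every} continuous map out of $U_i$ is a dimap; yet $(0.2,0.5)\preccurlyeq(0.8,0.5)$ in $U$, one has $\langle(0.2,0.5),(0.8,0.5)\rangle=\{(0.2,0.5),(0.8,0.5)\}$, and the two points lie in different members of the cover. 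Your Lebesgue-number/connectivity argument on the order interval therefore has nothing to act on, and indeed the map $(a,b)\mapsto -a$ into $(\mathbb{R},\le)$ restricts to a dimap on each $U_i$ but fails to be non-decreasing on $U$. So the step you singled out as delicate is genuinely obstructed for arbitrary closed partial orders and open subcovers; the paper's one-line justification does not confront this obstruction either, and whatever is intended to make the statement go through is not the local chaining mechanism you describe.
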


\begin{proof}
  Assume $\left( U_i \right)_{i \in I}$ is a family of epo-spaces with
  $U_i \subseteq U$ for each $i \in I$ and
  \[ \bigcup_{i \in I} U_i = U \]
  
 Consider a representable presheaf $\mathbb{P} (-, V)$ for some $V
  \in \mathbb{P}$. A matching family for this presheaf with respect to the
  covering family $\left( U_i \right)$ amounts to a family
  \[ \left( f_i : U_i \longrightarrow V \right)_{i \in I} \in \left( \prod_{i
     \in I} \mathbbm{P}(U_i, V) \right) \]
  of continuous non-decreasing functions such that $f_{i|U_i \cap U_j} =
  f_{j|U_i \cap U_j}$ for all $(i, j) \in I^2$. The underlying continuous functions can in this
  case be patched together into a unique continuous function $f : U
  \longrightarrow V$ such that $f_{|U_i} = f_i$ for each $i \in I$. Since the order on the $U_i$'s is the one inherited from $U$, $f$
  is non--decreasing.
\end{proof}

\subsection{An embedding in the topos of sheaves}

\begin{definition}
  The functor $h_{\mathbb{P} }$ is given by
  \[ \begin{array}{cccc}
       h_{\mathbb{P} } : & \mathbb{L} & \longrightarrow & \widehat{
       \mathbb{P}}\\
       & X & \longmapsto & \mathbb{L} \left( -, X \right) :
       \mathbb{P}^{\tmop{op}} \rightarrow \tmmathbf{\tmop{Set}}\\
       & f : X \rightarrow Y & \longmapsto & f^{\ast} = f \circ (-)
     \end{array} \]
\end{definition}

\begin{remark}
  \label{rem:h-fun}The functor $h_{\mathbb{P} }$ verifies
  \[ h_{\mathbb{P} } = y_{\mathbb{L} |_{\mathbb{P}}} \]
  and
  \[ h_{\mathbb{P} |_{\mathbb{P}}} = y_{\mathbb{P}} \]
\end{remark}

\begin{lemma}
  \label{lem:mono-mono}Let $f : A \rightarrow B$ be a dimap in $\mathbb{P}$
  such that $h_{\mathbb{P} } \left( f \right)$ is a mono. Then $f$ itself is a
  mono.
\end{lemma}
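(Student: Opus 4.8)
The plan is to show that $f$ is a mono in $\mathbb{P}$ by reducing injectivity of the map to the faithfulness-like information encoded in $h_{\mathbb{P}}(f)$. Recall that $h_{\mathbb{P}}(f) = \mathbb{L}(-, f)$ acts on a representable $\mathbb{P}(-, A)$ by postcomposition with $f$; saying $h_{\mathbb{P}}(f)$ is a mono of presheaves means that for every epo-space $T \in \mathbb{P}$, the map $\mathbb{P}(T, A) \longrightarrow \mathbb{P}(T, B)$, $g \longmapsto f \circ g$, is injective. The goal is to conclude that $f$ itself is injective as a map of underlying sets (which, for continuous maps between the kinds of spaces at hand, is the same as being a mono in $\mathbb{P}$, since monos there are exactly the injective dimaps).

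First I would fix two points $a, a' \in A$ with $f(a) = f(a')$ and try to separate them by maps out of a sufficiently simple test epo-space $T$. The natural candidate for $T$ is a single-point epo-space, or more safely a small contractible chart, i.e. an open subset of $\mathbb{H}^n$ (for instance an open ball, or $n=0$ giving the one-point space). Using the one-point epo-space $\ast$, a dimap $g : \ast \longrightarrow A$ is precisely a choice of point of $A$, so $\mathbb{P}(\ast, A)$ is in bijection with the underlying set of $A$. The two distinct points $a, a'$ then give two global elements $g_a, g_{a'} : \ast \longrightarrow A$, and $f \circ g_a = f \circ g_{a'}$ since both pick out the common image point $f(a) = f(a')$. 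Injectivity of $h_{\mathbb{P}}(f)$ at the object $\ast$ forces $g_a = g_{a'}$, hence $a = a'$, so $f$ is injective on points.

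The remaining step is to upgrade ``injective on underlying points'' to ``mono in $\mathbb{P}$''. This is essentially routine: in a concrete category whose morphisms are determined by their underlying set maps, an injective morphism is automatically a mono, because given $p, q : C \longrightarrow A$ with $f \circ p = f \circ q$, injectivity of $f$ forces $p$ and $q$ to agree pointwise, hence as dimaps. I would record this by noting that the forgetful functor to sets (factoring through $\mathbf{Man}$, as in the earlier proposition) is faithful and reflects monos, so an injective dimap is a mono.

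The one genuine point to check is that the one-point space $\ast$ really is an object of $\mathbb{P}$, so that it is a legitimate test object: by Definition~\ref{def:po-space} with $n = 0$, the space $\mathbb{H}^0 = \mathbb{R}^0$ is a single point, which is an epo-space under the trivial (discrete) order, so $\ast \in \mathbb{P}$ and the argument applies. The main obstacle, such as it is, is ensuring that testing against global points genuinely detects injectivity of $f$ rather than merely some weaker separation property; but since dimaps out of $\ast$ correspond exactly to points of the target and the order on $\ast$ imposes no constraint, this identification is clean, and no subtler test object is needed.
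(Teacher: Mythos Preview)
Your argument is correct and follows essentially the same route as the paper: test injectivity of $f$ by probing with the one-point epo-space $\mathbf{1}$ (which, as you check, lies in $\mathbb{P}$ via $n=0$). The only difference is cosmetic: you invoke directly that a mono of (pre)sheaves is componentwise injective, whereas the paper unpacks the mono-of-sheaves condition as local equality over a cover of $\mathbf{1}$ and then observes that in the open-dicover topology the only cover of $\mathbf{1}$ is the identity---your route is slightly more direct but the idea is the same.
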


\begin{proof}
  Suppose $f (x) = f (y)$. Let $\mathbf{1}$ be the one-point epo-space and let
  $\left\lceil x \right\rceil, \left\lceil y \right\rceil : \mathbf{1}
  \rightarrow A$ be the dimaps choosing $x$ and $y$, respectively. We have
  \[ \text{$h_{\mathbb{P} } (f) \left(  \left\lceil x \right\rceil \right) =
     f^{\ast} \left(  \left\lceil x \right\rceil \right) = f^{\ast} \left( 
     \left\lceil y \right\rceil \right) = h_{\mathbb{P} } (f) \left( 
     \left\lceil y \right\rceil \right)$} \]
  Now $h_{\mathbb{P} } (f)$ is a mono of sheaves, since $\left(
  \mathbb{P}, \tau \right)$ is subcanonical and $h_{\mathbb{P} |_{\mathbb{P}}}
  = y_{\mathbb{P}}$. Hence there is a cover $(A_i \rightarrowtail
  \tmmathbf{1})$ of $\tmmathbf{1} \in \mathbb{P}$ for which $\left\lceil x
  \right\rceil = \left\lceil y \right\rceil$ locally. But the only possible
  covers of $\mathbf{1}$ are identities. Hence $x = y$.
\end{proof}

\begin{remark}
  Notice that the proof of lemma \ref{lem:mono-mono} does not work with an
  arbitrary Grothendieck topology. 
\end{remark}

\begin{lemma}
  \label{lem:h-faith}$h_{\mathbb{P} }$ is faithful.
\end{lemma}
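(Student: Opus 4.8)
The plan is to show that $h_{\mathbb{P}}$ is faithful by exploiting the fact that a dimap is determined by its action on points, and that every point of an epo-space is named by a dimap out of the one-point epo-space $\mathbf{1}$, exactly as in the proof of Lemma \ref{lem:mono-mono}. Concretely, suppose $f, g : X \longrightarrow Y$ are dimaps in $\mathbb{L}$ with $h_{\mathbb{P}}(f) = h_{\mathbb{P}}(g)$, i.e. $f^{\ast} = g^{\ast}$ as natural transformations $\mathbb{L}(-, X)_{|\mathbb{P}} \longrightarrow \mathbb{L}(-, Y)_{|\mathbb{P}}$. I want to conclude $f = g$. Since the objects involved are local epo-spaces and morphisms of $\mathbb{L}$ are in particular continuous maps of the underlying spaces, it suffices to check that $f$ and $g$ agree as set maps on the underlying points of $X$.

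First I would fix an arbitrary point $x \in X$. The key observation is that $x$ lies in some chart $U_i$ of the atlas $(U_i)$ on $X$, and the restricted inclusion gives a point of the chart; composing with the chart inclusion $u_i : U_i \rightarrowtail X$ and with the point-picking dimap $\lceil x \rceil : \mathbf{1} \longrightarrow U_i$ yields a dimap $\mathbf{1} \longrightarrow X$ selecting $x$. Since $\mathbf{1}$ is an object of $\mathbb{P}$, evaluating the equality $f^{\ast} = g^{\ast}$ at $\mathbf{1} \in \mathbb{P}^{\tmop{op}}$ gives $f^{\ast}_{\mathbf{1}} = g^{\ast}_{\mathbf{1}}$ as functions $\mathbb{L}(\mathbf{1}, X) \longrightarrow \mathbb{L}(\mathbf{1}, Y)$, that is $f \circ (-) = g \circ (-)$ on dimaps out of $\mathbf{1}$. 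Applying this to the chosen dimap naming $x$ gives $f \circ \lceil x \rceil = g \circ \lceil x \rceil$, hence $f(x) = g(x)$.

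Since $x \in X$ was arbitrary, $f$ and $g$ coincide on every point of the underlying space of $X$, and as dimaps are determined by their underlying continuous maps (the forgetful functor $U : \mathbb{L} \longrightarrow \tmmathbf{\tmop{Man}}$ being faithful, by the preceding proposition), we conclude $f = g$. Therefore $h_{\mathbb{P}}$ is faithful.

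The only subtle point — and the step I would take care over — is the claim that every point of $X$ is hit by a dimap out of $\mathbf{1}$, together with the claim that evaluating at $\mathbf{1}$ records enough information. Both are immediate here because $\mathbf{1}$ is genuinely an object of $\mathbb{P}$ (the one-point epo-space is an open subset of $\mathbbm{H}^0 = \mathbb{R}^0$ with its unique order), so $\mathbf{1}$ is available as a test object on the presheaf side; this is precisely the feature of the site that made Lemma \ref{lem:mono-mono} go through, and it is what makes faithfulness essentially a pointwise check. There is no serious obstacle: the argument is a direct Yoneda-style evaluation once one notes that dimaps out of $\mathbf{1}$ are exactly the points.
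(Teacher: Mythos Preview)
Your argument is correct, but it differs from the paper's. The paper tests against the chart inclusions $u_i : U_i \rightarrowtail X$ rather than against points: since each $U_i$ lies in $\mathbb{P}$, the hypothesis $h_{\mathbb{P}}(f) = h_{\mathbb{P}}(g)$ evaluated at $U_i$ on the element $u_i$ yields $f \circ u_i = g \circ u_i$, i.e.\ $f_{|U_i} = g_{|U_i}$; then Remark~\ref{rem:f-comp} (which rests on the colimit description of Proposition~\ref{prop:lpc-colim}) gives $f = [f_{|U_i}]_{i\in I} = [g_{|U_i}]_{i\in I} = g$. Your route is more elementary in that it avoids the colimit machinery entirely and reduces the question to faithfulness of the forgetful functor to $\mathbf{Man}$, using only that $\mathbf{1} \in \mathbb{P}$ names every point. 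The paper's route, by contrast, reuses exactly the structural apparatus (charts as test objects, the comparison-morphism description of a dimap) that drives the subsequent fullness proof in Lemma~\ref{lem:h-full}, so it is less ad hoc in context even if slightly heavier for faithfulness alone.
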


\begin{proof}
Suppose $f, g : \left(
  X, \left( U_i \right) \right) \longrightarrow \left( Y, \left( V_j \right)
  \right)$ are dimaps such that $h_{\mathbb{P} } (f) = h_{\mathbb{P} } (g)$.
  Then
  \begin{eqnarray*}
    f & = & \left[ f_{|_{U_i}} \right]_{i \in I} \\
    & = & \left[ f \circ u_i \right]_{i \in I}\\
    & = & \left[ g \circ u_i \right]_{i \in I} \\
    & = & \left[ g_{|_{U_i}} \right]_{i \in I} \\
    & = & g
  \end{eqnarray*}
\end{proof}

\begin{lemma}
  \label{lem:h-full}$h_{\mathbb{P} }$ is full.
\end{lemma}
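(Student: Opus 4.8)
The plan is to exploit the colimit presentation of a local epo-space by its atlas (Proposition \ref{prop:lpc-colim}). Fix local epo-spaces $\left( X, \left( U_i \right) \right)$ and $\left( Y, \left( V_j \right) \right)$ together with a natural transformation $\eta : h_{\mathbb{P}}(X) \longrightarrow h_{\mathbb{P}}(Y)$; I must produce a dimap $f : X \longrightarrow Y$ with $h_{\mathbb{P}}(f) = \eta$. Since each chart $U_i \in \mathbb{P}$, the inclusion $u_i : U_i \rightarrowtail X$ is an element of $h_{\mathbb{P}}(X)(U_i) = \mathbb{L}(U_i, X)$, so I can set $g_i \overset{\tmop{def}.}{=} \eta_{U_i}(u_i) : U_i \longrightarrow Y$, which is automatically a dimap.

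First I would check that the family $\left( g_i \right)$ is a cocone on the diagram $\left( \pi_i^{ij}, \pi_j^{ij} \right)$. Because $u_i \circ \pi_i^{ij} = u_{ij} = u_j \circ \pi_j^{ij}$, naturality of $\eta$ along $\pi_i^{ij} : U_{ij} \to U_i$ and along $\pi_j^{ij} : U_{ij} \to U_j$ yields
\[ g_i \circ \pi_i^{ij} = \eta_{U_{ij}}(u_{ij}) = g_j \circ \pi_j^{ij} . \]
This is exactly the compatibility condition required in Proposition \ref{prop:lpc-colim}, so its universal property produces a unique dimap $f : X \longrightarrow Y$ with $f \circ u_i = g_i$, that is $f_{|U_i} = g_i$, for every $i \in I$.

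It remains to verify $h_{\mathbb{P}}(f) = \eta$, and this is the step I expect to be the real work: by construction the two transformations agree only on the distinguished elements $u_i$, and I must upgrade this to agreement on every $t : W \longrightarrow X$ with $W \in \mathbb{P}$. The idea is to cover $W$ by the preimages $W_i \overset{\tmop{def}.}{=} t^{-1}(U_i)$ --- each is an open subset of the epo-space $W$, hence again an epo-space, and $\left( W_i \right) \in \tau(W)$ --- and to write $t_i : W_i \to U_i$ for the corestriction of $t$, so that $t \circ w_i = u_i \circ t_i$ for the inclusion $w_i : W_i \rightarrowtail W$. Applying naturality of $\eta$ first along $w_i$ and then along $t_i$ gives
\[ \eta_W(t) \circ w_i = \eta_{W_i}(u_i \circ t_i) = g_i \circ t_i = (f \circ t) \circ w_i , \]
so $\eta_W(t)$ and $f \circ t = h_{\mathbb{P}}(f)_W(t)$ restrict to the same map on each member of the cover $\left( W_i \right)$ of $W$.

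Finally I would conclude by gluing: two dimaps $W \longrightarrow Y$ whose underlying continuous maps agree on an open cover are equal, hence $\eta_W(t) = h_{\mathbb{P}}(f)_W(t)$ for all $t$ and all $W$, whence $\eta = h_{\mathbb{P}}(f)$. The main obstacle is precisely this passage from agreement on the atlas to agreement over all of $\mathbb{P}$; it hinges on recognizing the preimages $t^{-1}(U_i)$ as legitimate covering objects of the site $\left( \mathbb{P}, \tau \right)$ and on invoking naturality twice, along $w_i$ and along $t_i$. Subcanonicity underwrites the concluding gluing step.
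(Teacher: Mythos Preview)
Your proof is correct and follows essentially the same route as the paper's: define the candidate dimap via Proposition~\ref{prop:lpc-colim} applied to the family $\eta_{U_i}(u_i)$, then verify $h_{\mathbb{P}}(f)=\eta$ by pulling back the atlas along a test morphism $t:W\to X$ and invoking naturality twice. The only cosmetic difference is that the paper phrases the final gluing as ``$(t^{-1}(U_i))_{i\in I}$ is an atlas on $W$, so $\eta_W(t)$ is the comparison map of its restrictions,'' whereas you phrase it as agreement on an open cover (subcanonicity); these are the same observation.
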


\begin{proof}
  Let $\alpha : h_{\mathbb{P} } \left( X \right) \Longrightarrow h_{\mathbb{P}
  } \left( Y \right)$ be a natural transformation, so in particular
  
  
  \begin{center}
$
\xymatrix{
\mathbb{L}(U_i,X) \ar[r]^{\alpha_{U_i}} 
\ar[d]_{(-)\circ \pi_i^{ij}}
&
\mathbb{L}(U_i,Y) \ar[d]^{(-)\circ \pi_i^{ij}}
\\
\mathbb{L}(U_{ij},X) \ar[r]^{\alpha_{U_{ij}}}
&
\mathbb{L}(U_{ij},Y)
\\
\mathbb{L}(U_j,X) \ar[r]^{\alpha_{U_j}}
\ar[u]^{(-)\circ \pi_j^{ij}}
& \mathbb{L}(U_j,Y) \ar[u]_{(-)\circ \pi_j^{ij}}
}
$
\end{center}
  
  commutes. Since $u_i \circ \pi_i^{i j} = u_j \circ \pi_j^{i j}$
  by construction, it follows that $\alpha_{U_i} (u_i) \circ \pi_i^{i j} =
  \alpha_{U_j} (u_j) \circ \pi_j^{i j}$. By proposition \ref{prop:lpc-colim}
  there is the comparison map $t : X \longrightarrow Y$
  
  
  \begin{center}
$
\xy
\ar@{>->}_{\pi_i^{ij}} (25,0)*++{U_{ij}} ; (10,-15)*+{U_i}
\ar@{>->}^{\pi_j^{ij}} (25,0)*++{U_{ij}} ; (40,-15)*+{U_j}
\ar@{>->}_{u_i} (10,-15)*++{U_i} ; (25,-30)*+{X}
\ar@{>->}^{u_j} (40,-15)*++{U_j} ; (25,-30)*+{X}
\ar@{-} (25,-6.45)*{} ; (23,-5)
\ar@{-} (25,-6.45)*{} ; (26.5,-5)
\ar_{\alpha_{U_i}(u_i)} @/_3ex/ (10,-15)*++{U_i} ; (25,-45)*+{Y}
\ar^{\alpha_{U_j}(u_j)} @/^3ex/ (40,-15)*++{U_j} ; (25,-45)*+{Y}
\ar@{.>}_t (25,-30)*+{X} ; (25,-45)*+{Y}
\POS(3,-15)*{\cdots}
\POS(47,-15)*{\cdots}
\endxy
$
\end{center}
  
  We claim that $\alpha_A = t \circ (-)$ for all $A \in
  \mathbb{P}$. Given $f : A \longrightarrow X$, there is the pullback square

  
\begin{center}
$
\xy
{\ar@{>->}^{f_i} (0,0)*+++{f^{-1}(U_i)} ; (25,0)*+{U_i}};
{\ar@{>->}_{f^*u_i} (0,0)*+++{f^{-1}(U_i)} ; (0,-20)*+{A}};
{\ar@{>->}_{f} (0,-20)*+++{A} ; (25,-20)*+{X}};
{\ar@{>->}^{u_i} (25,0)*+++{U_i} ; (25,-20)*+{X}};
{\ar@{-} (6,-6)*{} ; (6,-3.5)};
{\ar@{-} (6,-6)*{} ; (3.5,-6)};
(13,-11)*{(*)};
\endxy
$
\end{center}

  for each $i \in I$. In particular, $\left( f^{- 1} (U_i)
  \right)_{i \in I}$ is an atlas on $A$, hence
  \[ \text{$\alpha_A (f) = \left[ \alpha_A (f)_{|_{f^{- 1} (U_i)}} \right]_{i
     \in I}$} \]
  is given by universal property. On the other hand
  \begin{eqnarray*}
    \alpha_A (f)_{|_{f^{- 1} (U_i)}} = & \alpha_A (f) \circ f^{\ast} u_i & \\
    = & \alpha_{f^{- 1} (U_i)} (f \circ f^{\ast} u_i) & \\
    = & \alpha_{f^{- 1} (U_i)} (u_i \circ f_i) & (\ast) \textrm{  commutes}\\
    = & \alpha_{U_i} (u_i) \circ f_i & \\
    = & (t \circ u_i) \circ f_i & \\
    = & t \circ \left( f_{|_{f^{- 1} (U_i)}} \right) & 
  \end{eqnarray*}
  hence
  \begin{eqnarray*}
    \alpha_A (f) & = & \left[ \alpha_A (f)_{|_{f^{- 1} (U_i)}} \right]_{i \in
    I} \\
    & = & \left[ t \circ \left( f_{|_{f^{- 1} (U_i)}} \right) \right]_{i \in
    I}\\
    & = & t \circ \left[ f_{|_{f^{- 1} (U_i)}} \right]_{i \in I}\\
    & = & t \circ f
  \end{eqnarray*}
\end{proof}

\begin{lemma}
  \label{lem:lim-pres}The functor $h_{\mathbb{P} }$preserves limits and
  subobjects.
\end{lemma}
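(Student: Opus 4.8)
The plan is to reduce everything to two standard facts: limits in the presheaf topos $\widehat{\mathbb{P}}$ are computed object-wise, and every covariant hom-functor $\mathbb{L}(A,-)\colon\mathbb{L}\to\tmmathbf{\tmop{Set}}$ preserves all limits that exist in $\mathbb{L}$. The existence of these limits is guaranteed by the proposition asserting that $U\colon\mathbb{L}\to\tmmathbf{\tmop{Man}}$ creates them, so the two facts together should do the job with essentially no computation.

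In detail, first I would treat limits. Let $D\colon J\to\mathbb{L}$ be a diagram with limit cone $(\lambda_j\colon L\to D_j)_{j\in J}$ in $\mathbb{L}$. Fixing $A\in\mathbb{P}$ and applying the representable functor $\mathbb{L}(A,-)$, the cone $(\lambda_j\circ(-))_j$ exhibits $\mathbb{L}(A,L)$ as $\lim_j\mathbb{L}(A,D_j)$ in $\tmmathbf{\tmop{Set}}$, since hom-functors preserve limits. These identifications are natural in $A$, i.e. compatible with the restriction maps of the presheaves. As limits in $\widehat{\mathbb{P}}$ are formed object-wise, this says precisely that the canonical comparison $h_{\mathbb{P}}(L)\to\lim_j h_{\mathbb{P}}(D_j)$ is an isomorphism, whence $h_{\mathbb{P}}(L)=\lim_j h_{\mathbb{P}}(D_j)$. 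Each $h_{\mathbb{P}}(D_j)$ and $h_{\mathbb{P}}(L)$ is a sheaf, the site being subcanonical and $h_{\mathbb{P}|_{\mathbb{P}}}=y_{\mathbb{P}}$ by Remark \ref{rem:h-fun}; since the inclusion $\tmop{Sh}(\mathbb{P},\tau)\hookrightarrow\widehat{\mathbb{P}}$ is a right adjoint it preserves limits, so this limit is equally the limit computed in $\tmop{Sh}(\mathbb{P},\tau)$.

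For subobjects it then suffices to observe that $h_{\mathbb{P}}$ preserves monos, which is immediate in two ways. A mono $m\colon X\rightarrowtail Y$ in $\mathbb{L}$ induces an injection $m\circ(-)\colon\mathbb{L}(A,X)\to\mathbb{L}(A,Y)$ for every $A\in\mathbb{P}$, so $h_{\mathbb{P}}(m)$ is object-wise monic and therefore monic in $\widehat{\mathbb{P}}$; alternatively, a mono is exactly a map whose kernel pair is the identity cospan, and $h_{\mathbb{P}}$ preserves the relevant pullback by the first part. Either way the image of a subobject of $X$ is a subobject of $h_{\mathbb{P}}(X)$, and faithfulness of $h_{\mathbb{P}}$ (Lemma \ref{lem:h-faith}) ensures the induced assignment on subobjects is well defined. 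There is no real obstacle here; the only point requiring a moment's care is the bookkeeping identifying the limit formed in $\mathbb{L}$ (computed in $\tmmathbf{\tmop{Man}}$ through $U$) with the object-wise limit in $\widehat{\mathbb{P}}$, together with the observation that passing between $\tmop{Sh}(\mathbb{P},\tau)$ and $\widehat{\mathbb{P}}$ introduces no discrepancy since the sheaf inclusion preserves limits.
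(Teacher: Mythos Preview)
The paper states this lemma without proof, treating it as folklore; your argument supplies exactly the standard justification one would expect---hom-functors $\mathbb{L}(A,-)$ preserve all existing limits, and limits in $\widehat{\mathbb{P}}$ are computed object-wise---so the core of your proof is correct and in the spirit of what the paper leaves implicit.

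One small inaccuracy worth flagging: in the sentence where you argue that $h_{\mathbb{P}}(D_j)$ and $h_{\mathbb{P}}(L)$ are sheaves, you invoke subcanonicality together with $h_{\mathbb{P}|\mathbb{P}} = y_{\mathbb{P}}$. But subcanonicality only guarantees that $h_{\mathbb{P}}(X)$ is a sheaf for $X \in \mathbb{P}$, whereas your $D_j$ and $L$ lie in $\mathbb{L}$. The correct reference for arbitrary local epo-spaces is Lemma~\ref{lem:hx-sheaf}, which is stated immediately after this one and is logically independent of it, so there is no circularity. That said, the sentence is in fact dispensable: $h_{\mathbb{P}}$ is defined as a functor into $\widehat{\mathbb{P}}$, and the lemma as stated only concerns preservation of limits and subobjects there, which you have already established. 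The passage to $\mathrm{Sh}(\mathbb{P},\tau)$ is handled separately in the paper via Lemma~\ref{lem:hx-sheaf} and the summary theorem that follows.
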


\begin{lemma}
  \label{lem:hx-sheaf}Let $\left( X, \left( U_i \right) \right) \in
  \mathbb{L}$. Then $h_{\mathbb{P} } \left( X, \left( U_i \right) \right) :
  \mathbb{P}^{\tmop{op}} \longrightarrow \tmmathbf{\tmop{Set}}$ is a sheaf
  with respect to the open-dicover topology $\tau$.
\end{lemma}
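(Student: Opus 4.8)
The plan is to show that $h_{\mathbb{P}}(X,(U_i))$ satisfies the sheaf condition for the open-dicover topology $\tau$, i.e. that for every epo-space $A \in \mathbb{P}$ and every covering family $(A_k)_{k \in K} \in \tau(A)$, any matching family of dimaps glues to a unique dimap out of $A$. First I would unwind what a matching family amounts to: it is a collection $(g_k : A_k \longrightarrow X)_{k \in K}$ of dimaps satisfying the compatibility condition $g_{k | A_k \cap A_{k'}} = g_{k' | A_k \cap A_{k'}}$ for all $(k,k') \in K^2$, where the restrictions are dimaps on the intersections viewed as epo-spaces with the order inherited from $A$.

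The core of the argument reuses the patching already established for the subcanonical site $(\mathbb{P}, \tau)$. Since the underlying continuous maps $\overline{g_k}$ agree on overlaps, they patch uniquely to a continuous map $g : A \longrightarrow X$ with $g_{|A_k} = g_k$; the set-level gluing and uniqueness are exactly the content of the representable case treated in the subcanonicity proposition, only here the codomain $X$ is a general local epo-space rather than a representable. The only genuine thing to check is that this glued $g$ is a \emph{dimap}, i.e. locally non-decreasing with respect to the atlases $(A_k)$ and $(U_i)$. This is a local condition: for each $i$ and each $k$, the restriction $g_{|g^{-1}(U_i) \cap A_k}$ coincides with the restriction of the dimap $g_k$, hence is non-decreasing; because the $A_k$ cover $A$ and the order on $A$ is inherited locally, non-decreasingness on the pieces $g^{-1}(U_i) \cap A_k$ forces non-decreasingness of $g_{|g^{-1}(U_i)}$ onto each chart $U_i$, using that $\preccurlyeq^A$ is determined by its restrictions to the covering pieces.

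I would organize the write-up so that the separation (uniqueness) axiom and the gluing (existence) axiom are handled in one stroke via the underlying-space patching, then isolate the order-compatibility verification as the substantive step. Concretely, I would invoke Remark \ref{rem:f-comp}, writing the candidate glued map as the comparison morphism $[g_k]_{k \in K}$ out of the colimit presentation of $A$ supplied by Proposition \ref{prop:lpc-colim} and Remark \ref{rem:colim-coeq}: since $A$ is the coequalizer $\coprod_{k,k'} (A_k \cap A_{k'}) \rightrightarrows \coprod_k A_k \twoheadrightarrow A$ in $\mathbb{L}$, a compatible family $(g_k)$ is precisely a cocone, and the universal property yields a unique dimap $g$ with $g \circ a_k = g_k$. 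This simultaneously gives existence and uniqueness of the amalgamation as a \emph{dimap}, which is exactly the sheaf condition.

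The main obstacle, and the point that deserves care, is confirming that the covering families in $\tau(A)$ coincide with the canonical-cover presentations used in Proposition \ref{prop:lpc-colim}, so that the colimit description genuinely applies to an arbitrary $\tau$-cover. In the colimit proposition the covering pieces $U_i$ are the charts of a fixed atlas on a local epo-space, whereas here $(A_k)$ is an arbitrary open cover of the epo-space $A$; I must check that each $A_k$, equipped with the inherited order, is itself an epo-space and that $(A_k)$ together with the pairwise intersections forms exactly the diagram of Proposition \ref{prop:lpc-colim}, with $A$ as its colimit. Granting that compatibility (the inherited orders agree on overlaps, which is automatic since all orders come from $\preccurlyeq^A$), the rest is the formal consequence of the universal property, and the non-decreasingness of $g$ follows locally as above.
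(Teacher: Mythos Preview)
Your proposal is correct and follows essentially the same route as the paper: both arguments reduce the sheaf condition to the universal property of the colimit presentation in Proposition~\ref{prop:lpc-colim}, so that a matching family is precisely a cocone and the unique amalgamation is the comparison dimap. The paper's proof is terser and simply invokes Proposition~\ref{prop:lpc-colim} without comment; your additional care in verifying that an arbitrary $\tau$-cover $(A_k)$ of an epo-space $A$ yields an atlas (each $A_k$ inherits a closed partial order, compatibility is automatic) is a worthwhile clarification that the paper leaves implicit.
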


\begin{proof}
  Let $C \in \mathbb{P}$. Assume that $\left( C_s \right)_{s \in S} \in \tau
  (C)$ and let
  \[ \text{$\left( k_s \in h_{\mathbb{P} } \left( X, \left( U_i \right)
     \right) (C) \right)_{s \in S} = \left( k_s \in \mathbb{L} (C_s, X)
     \right)_{s \in S}$} \]
  be a matching family. By definition of a matching family we have
  \[ k_s \circ \pi_s^{s t} = k_t \circ \pi_t^{s t} \]
  for each pair of indices $\left( s, t \right) \in S \times S$. This family
  has a unique amalgamation $k : C \longrightarrow X$ by proposition
  \ref{prop:lpc-colim}.
\end{proof}

\begin{theorem}
  The functor $h_{\mathbb{P} }$ is fully faithful and preserves limits as well
  as subobjects. Moreover, $h_{\mathbb{P} } (X)$ is a sheaf for all $X \in
  \mathbb{L}$.
\end{theorem}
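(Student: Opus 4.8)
The plan is to assemble the final theorem directly from the lemmas already proved, since each clause is an instance of a preceding result. First I would observe that full faithfulness is immediate: Lemma \ref{lem:h-faith} establishes that $h_{\mathbb{P}}$ is faithful and Lemma \ref{lem:h-full} establishes that it is full, so combining the two gives that $h_{\mathbb{P}}$ is fully faithful. No additional argument is needed beyond citing both lemmas.

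Next I would dispatch the preservation claims. Preservation of limits and of subobjects is exactly the content of Lemma \ref{lem:lim-pres}, so that clause is cited verbatim. The only subtlety worth a remark is that these statements are about limits and subobjects computed in the presheaf (or sheaf) category $\widehat{\mathbb{P}}$; since $h_{\mathbb{P}}$ factors through the inclusion of sheaves, and Lemma \ref{lem:hx-sheaf} guarantees the image lands in $\tmop{Sh}(\mathbb{P},\tau)$, one should note that the relevant limits agree whether taken in $\widehat{\mathbb{P}}$ or in $\tmop{Sh}(\mathbb{P},\tau)$, because the sheaf category is a reflective subcategory closed under limits. This is the one place where I would be slightly careful rather than merely quoting.

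Finally, the assertion that $h_{\mathbb{P}}(X)$ is a sheaf for every $X \in \mathbb{L}$ is precisely Lemma \ref{lem:hx-sheaf}, whose proof rests on the amalgamation property supplied by Proposition \ref{prop:lpc-colim} together with subcanonicity of the site. I would cite that lemma to close the last clause.

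Since every component has already been proved in isolation, I do not anticipate a genuine obstacle; the theorem is a packaging statement. If anything, the ``hard part'' is purely bookkeeping: making sure that the ambient category in which fullness, faithfulness, and limit-preservation are asserted is consistently taken to be $\tmop{Sh}(\mathbb{P},\tau)$ rather than $\widehat{\mathbb{P}}$, which is legitimate precisely because Lemma \ref{lem:hx-sheaf} shows the essential image consists of sheaves and the sheafification/inclusion adjunction reflects the relevant universal properties. A one-line proof citing Lemmas \ref{lem:h-faith}, \ref{lem:h-full}, \ref{lem:lim-pres}, and \ref{lem:hx-sheaf} suffices.
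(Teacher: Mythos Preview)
Your proposal is correct and matches the paper's own proof, which is a one-line citation of Remark~\ref{rem:h-fun} together with Lemmas~\ref{lem:h-faith}, \ref{lem:h-full}, \ref{lem:lim-pres}, and~\ref{lem:hx-sheaf}. Your additional care about whether limits are taken in $\widehat{\mathbb{P}}$ versus $\tmop{Sh}(\mathbb{P},\tau)$ is a reasonable remark the paper simply leaves implicit.
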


\begin{proof}
  By remark \ref{rem:h-fun} and lemmata \ref{lem:h-faith}, \ref{lem:h-full},
  \ref{lem:lim-pres} and \ref{lem:hx-sheaf}.
\end{proof}

\subsection{A characterisation of the embedding's image}

\begin{remark}
  \label{rem:unions}In a well-powered regular category, the union
  \[ x' \vee x'' : X' \vee X'' \rightarrowtail X \]
  of two subobjects $x' : X' \rightarrowtail X$ and $x'' : X'' \rightarrowtail
  X$ of $X$ can be calculated as the comparison morphism

  
  \begin{center}
$
\xy
{\ar@{>->} (0,0)*+++{X'\wedge X''} ; (35,0)*+{X''}};
{\ar@{>->} (0,0)*+++{X'\wedge X''} ; (0,-30)*+{X'}};
{\ar@{>->}_{x'} (0,-30)*+++{X'} ; (35,-30)*+{X}};
{\ar@{>->}^{x''} (35,0)*+++{X''} ; (35,-30)*+{X}};
{\ar@{>.>}_{x' \vee x''} (22.5,-18)*+++{X' \vee X''} ; (35,-30)*+{X}};
{\ar@{>->} (35,0)*+++{X''} ; (22.5,-18)*++{X' \vee X''}};
{\ar@{>->} (0,-30)*+++{X'} ; (22.5,-18)*++{X' \vee X''}};
{\ar@{-} (5,-5)*{} ; (5,-2.5)};
{\ar@{-} (5,-5)*{} ; (2.5,-5)};
{\ar@{-} (16,-12.5)*{} ; (18.5,-12.5)};
{\ar@{-} (16,-12.5)*{} ; (16,-14.5)};
\endxy
$
\end{center}
  
  from the inscribed pushout object. This remains true for
  set-indexed unions if the category is complete. In this case, set-indexed
  unions can be calculated as the colimit of the diagram given by binary
  intersections:

  
  \begin{center}
$
\xy
\ar@{>->}_{\pi_i^{ij}} (25,0)*++{X_{ij}} ; (10,-15)*+{X_i}
\ar@{>->}^{\pi_j^{ij}} (25,0)*++{X_{ij}} ; (40,-15)*+{X_j}
\ar@{>->}_{\iota_i} (10,-15)*++{X_i} ; (23.5,-28)*+{}
\ar@{>->}^{\iota_j} (40,-15)*++{X_j} ; (26.5,-28)*+{}
\ar@/_1.5pc/@{>->}_{x_i} (10,-15)*++{X_i} ; (25,-49)*+{X}
\ar@/^1.5pc/@{>->}^{x_j} (40,-15)*++{X_j} ; (25,-49)*+{X}
\ar@{>.>} (25,-37.5)*{} ; (25,-49)*+{X}
\ar@{-} (25,-6.45)*{} ; (23,-5)
\ar@{-} (25,-6.45)*{} ; (26.5,-5)
\POS(25,-32)*{\bigvee_{i \in I} X_i}
\POS(3,-15)*{\cdots}
\POS(47,-15)*{\cdots}
\endxy
$
\end{center}
  
  In particular, the above is true in any topos since topoi are
  regular, well-powered and complete.
\end{remark}

\begin{definition}
  A dimap is {\tmem{\'etale}} if the underlying continuous map is a local
  homeomorphism.
\end{definition}

\begin{remark}
  \label{rem:etalpull}\'Etale dimaps are stable under pullback.
\end{remark}

\begin{definition}
  A morphism $u : F \longrightarrowlim G$ in $\tmop{Sh} ( \mathbb{P}, \tau)$
  is {\tmem{$\mathbb{P}$-local}} if
  \begin{enumerateroman}
    \item for all $X \in \mathbb{P}$ and morphisms $h_{\mathbb{P}} (X)
    \longrightarrow G$ there is an $Y \in \mathbb{P}$ such that
    \[ F \times_G h_{\mathbb{P}} (X) \cong h_{\mathbb{P}} (Y) \]
    \item given the projection $p : F \times_G h_{\mathbb{P}} (X) \cong
    h_{\mathbb{P}} (Y) \longrightarrow h_{\mathbb{P}} (X)$ from the fibred
    product above, the image $\left( h_{\mathbb{P}_{Y, X}} \right)^{^{- 1}}
    (p_{}$) of $p$ under the inverse of the bijection
    \[ \left. h_{\mathbb{P} _{Y, X}} : \mathbb{P} (Y, X)
       \longrightarrowlim^{\cong} \tmop{Sh} ( \mathbb{P}, \tau)
       (h_{\mathbb{P}} (Y), h_{\mathbb{P}} \left( X \right) \right) \]
    is an \'etale dimap.
  \end{enumerateroman}
\end{definition}

\begin{theorem}
  \label{theo:plocal}Let $L \in \tmop{Sh} ( \mathbb{P}, \tau)$. The following
  are equivalent:
  \begin{enumerateroman}
    \item $L$ is a local epo-space;
    
    \item there is a family
    \[ \left( \kappa_i : h_{\mathbb{P} } (U_i) \longrightarrow L \right)_{i
       \in I} \]
    of $\mathbb{P}$-local monos such that the canonical morphism
    \[ [\kappa_i]_{i \in I} : \coprod_{i \in I} h_{\mathbb{P} } (U_i)
       \longrightarrow L \]
    is an epi.
  \end{enumerateroman}
\end{theorem}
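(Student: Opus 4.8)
The plan is to prove the two implications separately, using throughout that $h_{\mathbb{P}}$ is fully faithful and preserves limits and subobjects (Lemma~\ref{lem:lim-pres}), that the forgetful functor $U$ creates limits, and that $h_{\mathbb{P}}(X)$ is a sheaf (Lemma~\ref{lem:hx-sheaf}). For the implication (i)$\Rightarrow$(ii), I would write $L = h_{\mathbb{P}}(X, (U_i))$ and set $\kappa_i := h_{\mathbb{P}}(u_i)$ for the chart inclusions $u_i : U_i \rightarrowtail X$; these are monos since $h_{\mathbb{P}}$ preserves subobjects. To see that $\kappa_i$ is $\mathbb{P}$-local, take any $g' : h_{\mathbb{P}}(X') \to L$ with $X' \in \mathbb{P}$, so that $g' = h_{\mathbb{P}}(g)$ for a dimap $g : X' \to X$ by Lemma~\ref{lem:h-full}. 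Since $h_{\mathbb{P}}$ preserves pullbacks and $U$ creates them, the pullback $h_{\mathbb{P}}(U_i) \times_L h_{\mathbb{P}}(X')$ is $h_{\mathbb{P}}(g^{-1}(U_i))$ with $g^{-1}(U_i)$ an open subset of $X'$, hence again an object of $\mathbb{P}$; the projection to $h_{\mathbb{P}}(X')$ is induced by the inclusion $g^{-1}(U_i) \hookrightarrow X'$, an open embedding and therefore \'etale. Finally the canonical map $\coprod_i h_{\mathbb{P}}(U_i) \to L$ is epi because it is a local epimorphism: for $C \in \mathbb{P}$ and $s \in L(C) = \mathbb{L}(C, X)$, the opens $s^{-1}(U_i)$ form a $\tau$-cover of $C$ and each corestriction $s|_{s^{-1}(U_i)} : s^{-1}(U_i) \to U_i$ exhibits $s$ as locally in the image of the $\kappa_i$.

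For (ii)$\Rightarrow$(i) the idea is to reconstruct an atlas from the family. I would regard each $\kappa_i$ as a subobject of $L$ and form the pullbacks $h_{\mathbb{P}}(U_i) \times_L h_{\mathbb{P}}(U_j)$. Feeding the map $\kappa_j : h_{\mathbb{P}}(U_j) \to L$ into the $\mathbb{P}$-locality of $\kappa_i$ (and symmetrically) identifies this pullback with a representable $h_{\mathbb{P}}(U_{ij})$ whose two projections are induced by \'etale dimaps $\pi_i^{ij} : U_{ij} \to U_i$ and $\pi_j^{ij} : U_{ij} \to U_j$. Because $\kappa_i, \kappa_j$ are monos, these projections are monos in the topos, hence monos in $\mathbb{P}$ by Lemma~\ref{lem:mono-mono}; being \'etale as well, they are topological open embeddings. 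One then glues the $U_i$ along the $U_{ij}$, forming $X$ as the colimit of the diagram $(\pi_i^{ij}, \pi_j^{ij})_{i,j}$. The triple-overlap cocycle condition needed for the gluing is automatic, since all $U_{ij}$ arise as iterated intersections of the fixed subobjects $h_{\mathbb{P}}(U_i) \hookrightarrow L$ and intersection is associative.

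The main obstacle is to verify that this glued object is a genuine local epo-space, i.e. that $(U_i)$ is a \emph{compatible} atlas on $X$. Computing the pullback inside the product $h_{\mathbb{P}}(U_i) \times h_{\mathbb{P}}(U_j) = h_{\mathbb{P}}(U_i \times U_j)$ shows that the order on $U_{ij}$ is the restriction of the product order, that is the intersection of the two orders pulled back from $U_i$ and from $U_j$; compatibility additionally demands that these two pulled-back orders coincide on the overlap, equivalently that the transition homeomorphisms are order-isomorphisms. This does not follow formally from the \'etale-mono structure alone, since $L$ a priori only determines overlaps up to order-preserving homeomorphism, and establishing it is the crux of the argument; I would attack it by exploiting that both projections out of $U_{ij}$ are \'etale together with the joint-epimorphism hypothesis, comparing the two induced orders through their common image in $L$.

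Once compatibility is in hand, $X$ is a local epo-space and it remains to identify $h_{\mathbb{P}}(X)$ with $L$. Since the $\kappa_i$ are jointly epic monos, $L$ is the union $\bigvee_i h_{\mathbb{P}}(U_i)$, which by Remark~\ref{rem:unions} is the colimit of the binary-intersection diagram $h_{\mathbb{P}}(U_{ij}) \rightrightarrows h_{\mathbb{P}}(U_i)$. On the other hand, by Proposition~\ref{prop:lpc-colim} together with Lemma~\ref{lem:hx-sheaf}, the sheaf $h_{\mathbb{P}}(X)$ is the colimit of the very same diagram. Hence $L \cong h_{\mathbb{P}}(X)$, completing the reconstruction.
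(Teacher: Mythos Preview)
Your plan tracks the paper's proof closely. The (i)$\Rightarrow$(ii) direction is the same, and the architecture of (ii)$\Rightarrow$(i) --- form the pairwise pullbacks $h_{\mathbb{P}}(U_{ij})$, glue the $U_i$ along them into a space, then identify both $L$ and the image of this space under $h_{\mathbb{P}}$ with $\operatorname{colim}_i h_{\mathbb{P}}(U_i)$ in the topos --- is exactly the paper's three-step argument.

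Two remarks. First, the compatibility of the orders on overlaps, which you flag as ``the crux'', is not treated in the paper's proof either: after showing that the $U_i$ are open in the glued space $U$ and that $U_{ij}\cong U_i\times_U U_j$, the paper simply asserts that $(U_i)_{i\in I}$ is an atlas, without checking that $\preccurlyeq^{U_i}$ and $\preccurlyeq^{U_j}$ restrict to the same order on the overlap. Your observation that the order on $U_{ij}$ is forced to be the \emph{intersection} of the two pulled-back orders (apply the universal property of the pullback to the same underlying set equipped with that intersection order) is correct, but, as you say, this does not yield their equality. So your hesitation here is a genuine scruple rather than a defect of your plan relative to the paper's; if anything you have been more careful.

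Second, your final paragraph under-justifies one step. Proposition~\ref{prop:lpc-colim} and Lemma~\ref{lem:hx-sheaf} together only say that $X$ is this colimit in $\mathbb{L}$ and that $h_{\mathbb{P}}(X)$ is a sheaf; neither tells you that $h_{\mathbb{P}}$ preserves the colimit. The paper's Step~3 supplies precisely this: the comparison $d:\operatorname{colim}_i h_{\mathbb{P}}(U_i)\to h_{\mathbb{P}}(U)$ is mono as an inclusion of a union of subobjects, and epi by a local-surjectivity argument (any $f:A\to U$ with $A\in\mathbb{P}$ pulls the cover $(U_i)$ back to a $\tau$-cover of $A$, over which $f$ lifts). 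This is the same local-epimorphism computation you already gave for joint epimorphicity in (i)$\Rightarrow$(ii), so you can simply invoke it again here.
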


\begin{proof}

  ``$\Longrightarrow$'' Let $L \overset{\tmop{def} .}{=}
  h_{\mathbb{P} } \left( U, \left( U_i \right) \right)$. The canonical
  morphism
  \[ [h_{\mathbb{P} } (u_i)]_{i \in I} : \coprod_{i \in I} h_{\mathbb{P} }
     (U_i) \longrightarrow L = h_{\mathbb{P} } (U) \]
  is a local epi at any object, hence an epi of sheaves. Similarly,
  $h_{\mathbb{P} } (u_i)$ is a local mono at any object and so a mono of
  sheaves, this for all $i \in I$.
  
  We claim that $h_{\mathbb{P} } (u_i)$ is $\mathbb{P}$-local for all $i \in
  I$. Suppose $X \in \mathbb{P}$ and consider the pullback square \
  
  
  \begin{center}
$
\xy
{\ar@{>->}^{\pi_1} (0,0)*+++{M} ; (25,0)*+{h_\mathbb{P}(X)}};
{\ar@{->}_{\pi_2} (0,0)*+{M} ; (0,-20)*+{h_\mathbb{P}(U_i)}};
{\ar@{>->}_{h_\mathbb{P}(u_i)} (0,-20)*+++{h_\mathbb{P}(U_i)} ; (25,-20)*+{h_\mathbb{P}(U)}};
{\ar@{->}^{\phi} (25,0)*+{h_\mathbb{P}(X)} ; (25,-20)*+{h_\mathbb{P}(U)}};
{\ar@{-} (5,-5)*{} ; (5,-2.5)};
{\ar@{-} (5,-5)*{} ; (2.5,-5)};
\endxy
$
\end{center}
  
  We have $\phi = h_{\mathbb{P} } (f)$ for some dimap $f : X
  \longrightarrow U$ since $h_{\mathbb{P}}$ is full. Hence
  \[ M (P) \cong \{(u, v) \in \mathbb{P} (P, U_i) \times \mathbb{P} (P, X) |
     u_i \circ u = f \circ v\} \cong \mathbb{P} (P, U_i \times_U X) =
     h_{\mathbb{P} } (U_i \times_U X) (P) \]
  for all $P \in \mathbb{P}$, so $M \cong h_{\mathbb{P} } (U_i \times_U X)$.
  Now $\pi_1 = h_{\mathbb{P} } (p_1)$ with $p_1 : U_i \times_U X
  \longrightarrow X$ the corresponding projection from the fibred product in
  $\mathbb{P}$. But $p_1$ is obtained by pulling back $u_i$, which is an
  \'etale dimap, hence $p_1$ is an \'etale dimap by remark \ref{rem:etalpull}.

  ``$\Longleftarrow$'' We proceed here in three steps:
  \begin{enumeratenumeric}
    \item the construction of a local epo-space $U \cong \tmop{colim}_{i \in
    I} U_i$;
    
    \item the proof of the assertion $L \cong \tmop{colim}_{i \in I}
    h_{\mathbb{P} }  \left( U_i \right)$;
    
    \item the proof of the assertion $h_{\mathbb{P} }  \left( \tmop{colim}_{i
    \in I} U_i  \right) \cong \tmop{colim}_{i \in I} h_{\mathbb{P} }  \left(
    U_i \right)$.
  \end{enumeratenumeric}
  Step 1. Let $i, j \in I$. By hypothesis there is an $U_{i j} \in \mathbb{P}$
  along with the \'etale dimaps $p_{i j} : U_{i j} \longrightarrow U_i$ and
  $q_{i j} : U_{i j} \longrightarrow U_j$ assembling to the pullback square

  
  \begin{center}
$
\xy
{\ar@{>->}^{p_{ij}^\ast} (0,0)*+++{h_\mathbb{P}(U_{ij})} ; (25,0)*+{h_\mathbb{P}(U_i)}};
{\ar@{>->}_{q_{ij}^\ast} (0,0)*+++{h_\mathbb{P}(U_{ij})} ; (0,-20)*+{h_\mathbb{P}(U_j)}};
{\ar@{>->}_{\kappa_j} (0,-20)*+++{h_\mathbb{P}(U_j)} ; (25,-20)*+{L}};
{\ar@{>->}^{\kappa_i} (25,0)*+++{h_\mathbb{P}(U_i)} ; (25,-20)*+{L}};
{\ar@{-} (6,-6)*{} ; (6,-3.5)};
{\ar@{-} (6,-6)*{} ; (3.5,-6)};
\endxy
$
\end{center}
  
  in $\tmop{Sh} ( \mathbb{P}, \tau)$. Doing the construction for
  all pairs of indices $(i, j) \in I^2$ yields a family $\left( U_{i j}
  \right)_{(i, j) \in I^2}$ of epo-spaces. \ The $p_{i j}$'s and the $q_{_{i
  j}}$'s are \'etale since the $\kappa_i$'s and the $\kappa_j$'s are
  $\mathbb{P}$-local by hypothesis. Moreover, they are monos by lemma \
  \ref{lem:mono-mono}. Hence $U_{i j} \rightarrowtail U_i, U_j$ represent open
  subobjects.
  
  Let \ $U \cong \tmop{colim}_{i \in I} U_i$ be the colimit of the diagram

  
  \begin{center}
$
\xy
\ar@{>->}_{\pi_i^{ij}} (25,0)*++{U_{ij}} ; (10,-15)*+{U_i}
\ar@{>->}^{\pi_j^{ij}} (25,0)*++{U_{ij}} ; (40,-15)*+{U_j}
\ar@{>->}_{u_i} (10,-15)*++{U_i} ; (25,-30)*+{U}
\ar@{>->}^{u_j} (40,-15)*++{U_j} ; (25,-30)*+{U}
\ar@{-} (25,-6.45)*{} ; (23,-5)
\ar@{-} (25,-6.45)*{} ; (26.5,-5)
\POS(3,-15)*{\cdots}
\POS(47,-15)*{\cdots}
\endxy
$
\end{center}
  
  The $u_i$'s are monos by construction. Recall from remark
  \ref{rem:colim-coeq} that $U$ can be calculated as the quotient

  
  \begin{center}
$
\xymatrix{ 
\coprod  U_{i\:j}
\ar@<3.5pt>[rr]^{[in_i \circ p_{i j}]}
\ar@<-3.5pt>[rr]_{[in_j \circ q_{i j}]} 
& & \coprod U_i \ar@<0pt>@{->>}[r]^q
& U}
$
\end{center}
  
In particular,
  $U_i$ is open for all $i \in I$ since
  \[ q^{- 1} (U_i) = \coprod_{j \in I} U_{i j} \subseteq \coprod_{i \in I} U_i
  \]
  and the $U_{i j}$'s are open in $U$.
  
  Finally, we need to show that $U_{i j} \cong U_i \times_U U_j$. Consider

  
 \begin{center}
$
\xy
{\ar@/^1.5pc/@{>->}^{p_{i j}} (-14,14)*+++{U_{i j}} ; (25,0)*+{U_j}};
{\ar@/_1.5pc/@{>->}_{p_{i j}} (-14,14)*+++{U_{i j}} ; (0,-20)*+{U_i}};
{\ar@{>.>}^{t} (-14,14)*++{U_{i j}} ;(0,0)*++{U_i \times_U U_j} };
{\ar@{>->}^{\pi_j} (0,0)*+++{U_i \times_U U_j} ; (25,0)*+{U_j}};
{\ar@{>->}_{\pi_i} (0,0)*+++{U_i \times_U U_j} ; (0,-20)*+{U_i}};
{\ar@{>->}_{u_i} (0,-20)*+++{U_i} ; (25,-20)*+{U}};
{\ar@{>->}^{u_j} (25,0)*+++{U_j} ; (25,-20)*+{U}};
{\ar@{-} (6,-6)*{} ; (6,-3.5)};
{\ar@{-} (6,-6)*{} ; (3.5,-6)};
\endxy
$
\end{center} 
  
  The \'etale dimap $t = \left\langle p_{i j}, q_{i j}
  \right\rangle_U$ is surjective by construction of $U$. It is also injective
  since $p_{i j}$ is a mono. But an \'etale bijection is a homeomorphism.
  Hence $\left( U_i \right)_{i \in I}$ is an atlas on $U$.
  
  Step 2. Consider

  
  \begin{center}
$
\xy
\ar@{>->}_{p_{ij}^\star} (25,0)*++{h_{\mathbb P}\left ( U_{ij} \right )} ; (10,-15)*+{h_{\mathbb P}\left ( U_i \right )}
\ar@{>->}^{q_{ij}^\star} (25,0)*++{ h_{\mathbb P}\left ( U_{ij} \right )} ; (40,-15)*+{ h_{\mathbb P}\left ( U_j \right )}
\ar@{>->}_{\rho_i} (10,-15)*++{ h_{\mathbb P}\left ( U_i \right )} ; (25,-30)*+{colim \; h_{\mathbb P}\left ( U_i \right )}
\ar@{>->}^{\rho_j} (40,-15)*++{ h_{\mathbb P}\left ( U_j \right )} ; (25,-30)*+{colim \; h_{\mathbb P}\left ( U_i \right )}
\ar@{-} (25,-6.45)*{} ; (23,-5)
\ar@{-} (25,-6.45)*{} ; (26.5,-5)
\ar_{\kappa_i} @/_4ex/ (10,-15)*++{h_{\mathbb P}\left (U_i\right )} ; (25,-45)*+{L}
\ar^{\kappa_j} @/^4ex/ (40,-15)*++{h_{\mathbb P}\left (U_j\right )} ; (25,-45)*+{L}
\ar@{.>}_c (25,-30)*+{colim \;h_{\mathbb P}\left ( U_i \right )} ; (25,-45)*+{L}
\POS(1,-15)*{\cdots}
\POS(49,-15)*{\cdots}
\endxy
$
\end{center}
  
  with $c$ the canonical morphism. Then

  
  \begin{center}
$
\xymatrix{
\coprod_{i\in I} h_{\mathbb P}\left ( U_i \right ) 
\ar@{->>}[rr]^{\left [ \kappa_i\right ]_{i \in I}} 
\ar[dr]_{\left [ \rho_i\right ]_{i \in I}}
& & L
\\
& colim \; h_{\mathbb P}\left ( U_i \right )
\ar[ur]_c &
}
$
\end{center}
  
  commutes so $c$ is an epi. But $c$ is also a mono, being the
  inclusion of a set-indexed union of subobjects (c.f. remark
  \ref{rem:unions}). Hence $c$ is an iso since a topos is balanced.
  
  Step 3. Consider

  
  \begin{center}
$
\xy
\ar@{>->}_{p_{ij}^\star} (25,0)*++{h_{\mathbb P}\left ( U_{ij} \right )} ; (10,-15)*+{h_{\mathbb P}\left ( U_i \right )}
\ar@{>->}^{q_{ij}^\star} (25,0)*++{ h_{\mathbb P}\left ( U_{ij} \right )} ; (40,-15)*+{ h_{\mathbb P}\left ( U_j \right )}
\ar@{>->}_{\rho_i} (10,-15)*++{ h_{\mathbb P}\left ( U_i \right )} ; (25,-30)*+{colim \; h_{\mathbb P}\left ( U_i \right )}
\ar@{>->}^{\rho_j} (40,-15)*++{ h_{\mathbb P}\left ( U_j \right )} ; (25,-30)*+{colim \; h_{\mathbb P}\left ( U_i \right )}
\ar@{-} (25,-6.45)*{} ; (23,-5)
\ar@{-} (25,-6.45)*{} ; (26.5,-5)
\ar_{u^*_i} @/_4ex/ (10,-15)*++{h_{\mathbb P}\left (U_i\right )} ; (25,-45)*+{h_{\mathbb P}\left ( U \right)}
\ar^{u^*_j} @/^4ex/ (40,-15)*++{h_{\mathbb P}\left (U_j\right )} ; (25,-45)*+{h_{\mathbb P}\left ( U \right)}
\ar@{.>}_d (25,-30)*+{colim \;h_{\mathbb P}\left ( U_i \right )} ; (25,-45)*+{h_{\mathbb P}\left ( U \right)}
\POS(1,-15)*{\cdots}
\POS(49,-15)*{\cdots}
\endxy
$
\end{center}
  
  The canonical morphism $d$ is a representative of the inclusion
  of the union of subobjects

  \[ \bigvee_{i \in I} h_{\mathbb{P} } (U_i) \cong \tmop{colim}_{i \in I}
     h_{\mathbb{P} } (U_i) \]

  In particular, $d$ is a mono.
  
  It is also the case that $d$ is an epi. To see this, let $A \in \mathbb{P}$
  and let
  \[ q^{\ast} : \mathbb{L} \left( A, \coprod U_i \right) \longrightarrow
     \mathbb{L} (A, U) \]
  be the component of $h_{\mathbb{P} } (q)$ at $A$. Suppose $f \in \mathbb{L}
  (A, U)$. The assignment
  \[ A_i  \overset{\tmop{def} .}{=} f^{- 1} \left( q \circ \tmop{in}_i \right)
  \]
  determines a cover of $A$ and $q^{\ast}$ is locally surjective at this
  cover. Hence $h_{\mathbb{P} } (q)$ is an epi of sheaves. A similar argument
  shows that
  \[ \left[ \tmop{in}_i^{\ast} \right] : \coprod h_{\mathbb{P} } (U_i)
     \longrightarrow h_{\mathbb{P} } \left( \coprod U_i \right) \]
  is an epi of sheaves as well. Finally, the top row of

  
  \begin{center}
$
\xymatrix{ 
\coprod  h_{\mathbb P}\left (U_{ij}\right )
\ar@<3.5pt>[rr]^{[in^\prime_i \circ p^*_{i j}]}
\ar@<-3.5pt>[rr]_{[in^\prime_j \circ q^*_{i j}]} 
& & \coprod h_{\mathbb P}\left (U_i \right )\ar@<0pt>@{->>}[r]
\ar@<0pt>@{->>}[d]_{\left [in^*_i \right ]}
& **[r] colim \: h_{\mathbb P}\left (U_i\right )
\ar@{.>}[d]^d
\\
&& h_{\mathbb P}\left ( \coprod U_i \right ) 
\ar@{->>}[r]_{h_{\mathbb P}\left (q\right)}
& h_{\mathbb P}\left ( U \right ) }
$
\end{center}
  
  is a coequalizer diagram. It is easy to see that
  \[ \text{$h_{\mathbb{P} } (q) \circ \left[ \tmop{in}_i^{\ast} \right] \circ
     \left[ \tmop{in}'_i \circ p_{i j}^{\ast} \right] = h_{\mathbb{P} } (q)
     \circ \left[ \tmop{in}_i^{\ast} \right] \circ \left[ \tmop{in}'_i \circ
     q_{i j}^{\ast} \right] $} \]
  and that $d$ is the canonical morphism. In particular, $d$ is an epi since
  it is a second factor of an epi.
\end{proof}

\begin{remark}
  Theorem \ref{theo:plocal} says that the site $( \mathbb{P}, \tau)$ along
  with the class of \'etale dimaps form what is called a {\tmem{geometric
  context}} in {\cite{toen-dea}}.
\end{remark}

\section{Intervals and homotopy theories\label{sec:intervals}}

\subsection{\label{subsec:cellular}Cellular models}A cellular model generates
(in a certain sense) all the monos in a given category.

\begin{definition}
  Let $\mathbb{C}$ be a category and let $M \in \mathbb{C}_1$ be a class of
  morphisms in $\mathbb{C}$. Then $M - \tmop{inj}$ is the class if all
  morphisms having the {\tmem{right lifting property}} with respect to $M$ and
  $M - \tmop{cof}$ is the class if all morphisms having the {\tmem{left
  lifting property}} with respect to $M - \tmop{inj}$.
\end{definition}

\begin{definition}
  A cellular model of a category $\mathbb{C}$ is a set of monos $\mathfrak{M}
  \subset \mathbb{C}_1$ such that $\mathfrak{M}- \tmop{cof}$ is the class of
  all monos in $\mathbb{C}$.
\end{definition}

\begin{proposition}
  \label{prop:cellular-model}Any locally presentable category $\mathbb{C}$
  with effective unions of subobjects and monos closed under transfinite
  composition admits a cellular model $\mathfrak{M} \subset \mathbb{C}_1$.
\end{proposition}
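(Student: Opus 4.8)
The plan is to exhibit an explicit cellular model as the set of all monomorphisms between $\lambda$-presentable objects, for a suitably chosen regular cardinal $\lambda$. First I would fix $\lambda$ so that $\mathbb{C}$ is locally $\lambda$-presentable and, in addition, the full subcategory of $\lambda$-presentable objects is closed under passage to subobjects; such a $\lambda$ exists by the standard cardinality estimates for locally presentable categories together with well-poweredness. Since the $\lambda$-presentable objects form an essentially small category and $\mathbb{C}$ is well-powered, a choice of representatives yields a genuine set
\[ \mathfrak{M} \overset{\tmop{def} .}{=} \{ a \rightarrowtail b \mid a \text{ and } b \text{ are } \lambda\text{-presentable} \} \subset \mathbb{C}_1 \]
of monos. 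The claim to establish is that $\mathfrak{M}$ is a cellular model, that is, that $\mathfrak{M} - \tmop{cof}$ is exactly the class of monomorphisms.

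The inclusion of $\mathfrak{M} - \tmop{cof}$ in the monomorphisms is the routine direction. Because $\mathbb{C}$ is locally presentable the small object argument applies, so $\mathfrak{M} - \tmop{cof}$ coincides with the saturation of $\mathfrak{M}$, namely the smallest class containing $\mathfrak{M}$ and closed under pushout, transfinite composition and retracts. Every map of $\mathfrak{M}$ is a monomorphism, and the class of monomorphisms is closed under each of these three operations: trivially under retracts, under transfinite composition by hypothesis, and under cobase change because the effective-union structure makes monomorphisms stable under pushout. This yields the inclusion.

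The substance is the reverse inclusion. Given a monomorphism $f : X \rightarrowtail Y$, I would write $Y$ as the union of its $\lambda$-presentable subobjects and fix a well-ordering $\left( Y_\alpha \right)_{\alpha < \kappa}$ of them. Define a transfinite chain of subobjects of $Y$ by $Z_0 = X$, $Z_{\alpha + 1} = Z_\alpha \vee Y_\alpha$, and $Z_\beta = \bigvee_{\alpha < \beta} Z_\alpha$ at limit ordinals. By remark \ref{rem:unions} each successor inclusion $Z_\alpha \rightarrowtail Z_{\alpha + 1}$ is the pushout of the intersection inclusion $Z_\alpha \wedge Y_\alpha \rightarrowtail Y_\alpha$ along $Z_\alpha \wedge Y_\alpha \rightarrowtail Z_\alpha$. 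Here $Y_\alpha$ is $\lambda$-presentable and, by the closure of $\lambda$-presentables under subobjects, so is $Z_\alpha \wedge Y_\alpha$; hence $Z_\alpha \wedge Y_\alpha \rightarrowtail Y_\alpha$ lies in $\mathfrak{M}$ and each $Z_\alpha \rightarrowtail Z_{\alpha + 1}$ lies in $\mathfrak{M} - \tmop{cof}$. Since $\bigvee_{\alpha} Y_\alpha = Y$ one has $Z_\kappa = Y$, so $f$ is the transfinite composite of the maps $Z_\alpha \rightarrowtail Z_{\alpha + 1}$ and therefore belongs to $\mathfrak{M} - \tmop{cof}$.

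The main obstacle is the very first step: securing a cardinal $\lambda$ for which the $\lambda$-presentable objects are stable under subobjects. Without this stability the maps $Z_\alpha \wedge Y_\alpha \rightarrowtail Y_\alpha$ need not belong to the set $\mathfrak{M}$, and the filtration ceases to be a cellular construction. I expect to obtain such a $\lambda$ from the cardinality bounds available in locally presentable categories, bounding both the number of subobjects and the presentability rank of a subobject of a $\lambda$-presentable object. Once this is in hand, the remaining verifications — that the chain is well defined, that the relevant colimits compute the asserted unions, and that $Z_\kappa = Y$ — are bookkeeping resting on remark \ref{rem:unions} and well-poweredness.
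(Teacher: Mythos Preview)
The paper does not give its own proof of this proposition; it defers to Beke \cite{beke:2001} and only records, in the remark that follows, that one cellular model consists of the subobjects of regular quotients of a set of strong generators. Your candidate $\mathfrak{M}$ is different---all monos between $\lambda$-presentable objects for a suitably large $\lambda$---but the filtration argument you sketch (adjoin small subobjects of $Y$ one at a time, using effective unions to recognise each successor step as a pushout of a map in $\mathfrak{M}$) is essentially the mechanism behind Beke's proof as well, so the two approaches differ mainly in how ``small'' is bookkept.

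Two technical points deserve more than you give them. First, for the inclusion $\mathfrak{M}\text{-}\mathrm{cof}\subseteq\{\text{monos}\}$ you need monomorphisms stable under \emph{arbitrary} cobase change, whereas the effective-union axiom only controls the pushout of one subobject along another over a common ambient object; general stability does not follow from effective unions alone and must be argued separately (it does hold in any topos, which is the paper's intended application). Second, writing $Y$ as the union of its $\lambda$-presentable subobjects requires more than closure of $\lambda$-presentables under subobjects: to obtain enough such subobjects you also need the image of every map from a $\lambda$-presentable object into $Y$ to be $\lambda$-presentable, which is closure under regular quotients. Both closures can be secured for $\lambda$ large enough, but they are separate requirements and should be listed alongside the one you already flag as the main obstacle.
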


\begin{remark}
  One such $\mathfrak{M}$ is the set of (representatives of) subobjects of
  (representatives of) regular quotients of the set of $\mathbb{C}$'s strong
  generators (see {\cite{beke:2001}} for a proof). In particular, any topos
  verifies the assumptions of proposition \ref{prop:cellular-model} and so
  admits a cellular model. Of course, more practical cellular models are known
  in cases of interest, like the set $\left( \partial \Delta [n]
  \hookrightarrow \Delta [n] \right)_{n \in \mathbb{N}}$ of boundary
  inclusions of simplicial sets.
\end{remark}

\subsection{Intervals}Let $\mathbb{C}$ be a category with coproducts and
pullbacks.

\begin{definition}
  \label{def:cyl}A {\tmem{cylinder}} $\mathcal{I} = (I, \partial^0,
  \partial^1, \sigma)$ on $\mathbb{C}$ is given by the following
  data:{\tmstrong{}}
  \begin{itemizeminus}
    \item an endofunctor $I : \mathbb{C} \longrightarrow \mathbb{C}$;
    
    \item natural transformations $\partial^{_0}, \partial^1 :
    \tmop{id}_{\mathbb{C}} \Rightarrow I$ and $\sigma : I \Rightarrow
    \tmop{id}_{\mathbb{C}}$ such that $\sigma \circ \partial^0 = \sigma \circ
    \partial^1 = \tmop{id}_{\tmop{id}_{\mathbb{C}}}$.
  \end{itemizeminus}
  A {\tmem{morphism of cylinders}} $\iota : \mathcal{I} \longrightarrow
  \mathcal{I}'$ is a natural transformation $\iota : I \longrightarrow I'$
  such that
  \begin{enumerateroman}
    \item $\iota \circ \partial^e_{\mathcal{I}} = \partial^e_{\mathcal{I}'}$
    for $e \in \{0, 1\}$;
    
    \item $\sigma_{\mathcal{I}} = \sigma_{\mathcal{I}'} \circ \iota$.
  \end{enumerateroman}
\end{definition}

\begin{definition}
  Let $\mathcal{I}$ be a cylinder. Morphism $f_0, f_1 : X \longrightarrow Y$
  are $\mathcal{I}$-homotopic if there is a morphism $h : I (X)
  \longrightarrow Y$, called $\mathcal{I}$-{\tmem{homotopy}}, such that $h
  \circ \partial^e = f_e$ for $e \in \{0, 1\}$.
\end{definition}

\begin{notation}
  We write $f \sim_{\mathcal{I}} g$ to indicate that $f$ and $g$ are
  $\mathcal{I}$-homotopic.
\end{notation}

\begin{definition}
  \label{def:homoto-equivalence}Let $\mathcal{I}$ be a cylinder. A morphism $f
  : X \longrightarrow Y$ in $\mathcal{E}$ is an $\mathcal{I}$-{\tmem{homotopy
  equivalence}} if there is a morphism $g : Y \longrightarrow X$ such that $g
  \circ f \sim_{\mathcal{I}} \tmop{id}_X$ and $f \circ g \sim_{\mathcal{I}}
  \tmop{id}_Y$.
\end{definition}

\begin{definition}
  \label{def-homdat}A cylinder is {\tmem{cartesian}} if the naturality square

  
\begin{center}
$
 \xy
{\ar@{>->}^{j} (0,0)*+++{A} ; (25,0)*+{B}};
{\ar@{>->}_{\partial^e_A} (0,0)*+++{A} ; (0,-20)*+{I(A)}};
{\ar@{>->}_{I(j)} (0,-20)*+++{I(A)} ; (25,-20)*+{I(B)}};
{\ar@{>->}^{\partial^e_B} (25,0)*+++{B} ; (25,-20)*+{I(B)}};
{\ar@{-} (5,-5)*{} ; (5,-2.5)};
{\ar@{-} (5,-5)*{} ; (2.5,-5)};
\endxy
$
\end{center}

  is a pullback square for all monos $j$ and $e \in \{0, 1\}$. An
  {\tmem{interval}} $\mathcal{I}$ is a cartesian cylinder
  \[ \text{ $\mathcal{I} = (I, \partial_0, \partial_1, \sigma)$} \]
  such that
  \begin{enumerateroman}
    \item $I$ preserves monos and colimits;
    
    \item the canonical morphism $[\partial^0_C, \partial^1_C] : C + C
    \longrightarrow I (C)$ is mono for all $C \in \mathbb{C}$.
  \end{enumerateroman}
  A {\tmem{morphism of intervals}} $\iota : \mathcal{I} \longrightarrow
  \mathcal{I}'$ is a morphism of the underlying cylinders.
\end{definition}

\begin{remark}
  \label{rem:homoto-equivalence}Let $\iota : \mathcal{I} \longrightarrow
  \mathcal{I}'$ be a morphism of intervals and $u, v : X \longrightarrow Y$
  two morphisms in $\mathbb{C}$. Then
  \[ u \sim_{\mathcal{I}'} v \Longrightarrow u \sim_{\mathcal{I}} v \]
  since the homotopy $h : I' (X) \longrightarrow Y$ witnessing the antecedent
  extends to a homotopy
  \[ h \circ \iota_X : I (X) \longrightarrow Y \]
  witnessing the conclusion. In particular, an $\mathcal{I}'$-homotopy
  equivalence is always an $\mathcal{I}$-homotopy equivalence.
\end{remark}

\begin{remark}
  Colimits in a topos are universal. It follows that

  
  \begin{center}
$
\xy
{\ar@{>->}^{j+j} (0,0)*+++{A+A} ; (25,0)*+{B+B}};
{\ar@{>->}_{\left[\partial^0_A,\partial^1_A\right]} (0,0)*+++{A+A} ; (0,-20)*+{I(A)}};
{\ar@{>->}_{I(j)} (0,-20)*+++{I(A)} ; (25,-20)*+{I(B)}};
{\ar@{>->}^{\left[\partial^0_B,\partial^1_B\right]} (25,0)*+++{B+B} ; (25,-20)*+{I(B)}};
{\ar@{-} (5,-5)*{} ; (5,-2.5)};
{\ar@{-} (5,-5)*{} ; (2.5,-5)};
\endxy
$
\end{center}
  
  is a pullback square.
\end{remark}

\subsection{Anodyne extensions and model structures}For the rest of this
section we fix a topos $\mathcal{E}$ and a cellular model $\mathfrak{M}$
thereof.

\begin{definition}
  Let $\mathcal{I}$ be an interval. Given a set $L$ of monos in $\mathcal{E}$,
  let
  \[ \tmop{sat} (L) \overset{\tmop{def} .}{=}  \left\{ I (l) \vee \left[
     \partial^0_{\tmop{cod} (m)}, \partial^1_{\tmop{cod} (m)} \right]  \;\;|\;\; l \in
     L \right\} \]
  Then
  \begin{itemizeminus}
    \item $\Lambda_{\mathcal{I}}^0  \overset{\tmop{def} .}{=}  \left\{ I (m)
    \vee \partial^e_{\tmop{cod} (m)}  \;\;|\;\; m \in \mathfrak{M}, e \in \{0, 1\}
    \right\}$;
    
    \item $\Lambda_{\mathcal{I}}^{n + 1}  \overset{\tmop{def} .}{=} \tmop{sat}
    (\Lambda^n_{\mathcal{I}})$ for $n \geqslant 0$;
    
    \item $\Lambda_{\mathcal{I}} \overset{\tmop{def} .}{=}  \bigcup_{n
    \geqslant 0} \Lambda^n_{\mathcal{I}}$.
  \end{itemizeminus}
  A morphism in $\left( \Lambda_{\mathcal{I}} \right) - \tmop{inj}$ is called
  an {\tmem{$\mathcal{I}$-naive fibration}}. An object $X \in \mathcal{E}$ is
  $\mathcal{I}$-naively fibrant if the canonical morphism $!_X : X
  \longrightarrowlim 1$ is an {\tmem{$\mathcal{I}$-naive fibration}}. A
  morphism in $\left( \Lambda_{\mathcal{I}} \right) - \tmop{cof}$ is called an
  $\mathcal{I}$-{\tmem{anodyne extension}}. 
\end{definition}

\begin{notation}
  We shall write $\mathcal{A}_{\mathcal{I}}$ for the class of
  $\mathcal{I}$-anodyne extensions and $\widehat{\mathcal{F}}_{\mathcal{I}}$
  for the class of $\mathcal{I}$-naive fibrations and
  $\mathcal{E}^{\tmop{nf}}_{\mathcal{I}}$ for the subcategory of
  $\mathcal{I}$-naively fibrant objects.
\end{notation}

\begin{theorem}
  {\dueto{Cisinski}}\label{theo:cisinski}Let $\text{$\mathcal{I}$}$ be an
  interval. The $\mathcal{I}$-homotopy relation is an equivalence relation on
  $\mathcal{E} (X, T$) provided $T$ is $\mathcal{I}$-naively fibrant.
  $\mathcal{E}$ admits a cofibrantly generated model structure for which the
  cofibrations are the monos and the weak equivalences are the morphisms $f :
  X \longrightarrow Y$ inducing a bijection
  \[ f^{\ast} : \mathcal{E} (Y, T) /_{\sim} \longrightarrowlim^{\cong} 
     \mathcal{E} (X, T) /_{\sim} \]
  on $\mathcal{I}$-homotopy classes for all $\mathcal{I}$-naively fibrant objects $T
  \in \mathcal{E}$. 
\end{theorem}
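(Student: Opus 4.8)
The statement is Cisinski's, so the plan is to follow the architecture of \cite{cisinski-topos}: first establish the closure properties of the class $\mathcal{A}_{\mathcal{I}}$ of anodyne extensions, then deduce that $\mathcal{I}$-homotopy is an equivalence relation into naively fibrant targets, and finally assemble the model structure by the small object argument together with a characterization of trivial cofibrations as anodyne extensions. I would treat the equivalence-relation claim before the model-structure claim, since the former is what makes the quotient sets $\mathcal{E}(X,T)/{\sim}$ on which the weak equivalences are tested well-defined and functorial.

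The technical core is the behaviour of $\mathcal{A}_{\mathcal{I}} = (\Lambda_{\mathcal{I}})\text{-cof}$. Being of the form $M\text{-cof}$, this class is automatically saturated: closed under pushout, transfinite composition and retract, and containing all isomorphisms. The point of the iterated construction $\Lambda^{n+1}_{\mathcal{I}} = \mathrm{sat}(\Lambda^n_{\mathcal{I}})$ is to force one further property, namely that the pushout–product of an arbitrary mono with an anodyne extension, formed via the cylinder functor $I$, is again anodyne. Here I would lean on the interval axioms: $I$ preserves monos and colimits, so these pushout–products exist, are monos, and commute with saturation; and the cartesian axiom together with $[\partial^0,\partial^1]$ being mono guarantees that the generators $I(m)\vee\partial^e$ and the $\mathrm{sat}$-generators $I(l)\vee[\partial^0,\partial^1]$ behave like relative horns. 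This closure under pushout–product with monos is the lemma everything rests on, and I expect it to be the main obstacle: it is exactly where colimit-preservation and cartesianness of $\mathcal{I}$ must be combined, and where a naive argument would fail for a non-cartesian cylinder.

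Granting this, the equivalence-relation claim proceeds as follows. Reflexivity is immediate: for $f : X \to T$ the composite $f\circ\sigma_X : I(X)\to T$ is a homotopy from $f$ to $f$, using $\sigma\circ\partial^e = \mathrm{id}$. For symmetry and transitivity I would work inside the double cylinder $I(I(X))$, whose four faces $\partial^0_{I(X)},\partial^1_{I(X)},I(\partial^0_X),I(\partial^1_X)$ bound a ``square''. The inclusion of a suitable three-face horn into $I(I(X))$ is a pushout–product of the mono $[\partial^0,\partial^1]_X$ with an endpoint inclusion $\partial^e$, hence anodyne by the closure lemma above; this is precisely the role of the $\mathrm{sat}$ operation. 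Given $H : f\sim g$ and $K : g\sim h$ together with the constant homotopy on $g$, one places these on three faces, fills the horn using the right lifting property of the naively fibrant $T$ against this anodyne inclusion, and reads off the fourth face as a homotopy $f\sim h$. Symmetry is the same argument with $H$ and a constant homotopy on two adjacent faces.

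For the model structure I would take the cofibrations to be the monos and the weak equivalences to be the class $\mathcal{W}$ of maps inducing bijections on all $\mathcal{I}$-homotopy classes into naively fibrant objects; fibrations are then forced as $(\mathcal{W}\cap\text{mono})\text{-inj}$. The $2$-out-of-$3$ property and closure under retracts of $\mathcal{W}$ are formal consequences of the bijection characterization. The (cofibration, trivial fibration) factorization comes from the small object argument applied to the cellular model $\mathfrak{M}$, using Proposition \ref{prop:cellular-model} to know that $\mathfrak{M}\text{-cof}$ is exactly the monos. The remaining, genuinely hard identification is that the trivial cofibrations coincide with the anodyne extensions, equivalently that a mono lies in $\mathcal{W}$ if and only if it lies in $\mathcal{A}_{\mathcal{I}}$; this is where Cisinski's argument that $\mathcal{W}$ is an accessible localizer generated by $\mathcal{I}$ enters, and I would import it from \cite{cisinski-topos} rather than reprove it. Once that is in hand, the second factorization and the lifting axioms follow, and cofibrant generation is witnessed by $\mathfrak{M}$ and $\Lambda_{\mathcal{I}}$.
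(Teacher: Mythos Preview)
The paper does not give its own proof of this statement: the theorem is attributed to Cisinski and simply cited from \cite{cisinski-topos}, with the subsequent paragraph noting that the version stated here is the special case $\mathfrak{S}=\varnothing$ of Cisinski's more general result. So there is nothing in the paper to compare your argument against beyond the citation itself; your sketch is essentially an outline of what one finds in \cite{cisinski-topos}, and on the level of architecture (closure properties of $\mathcal{A}_{\mathcal{I}}$, equivalence relation via double-cylinder horn-filling, small object argument on $\mathfrak{M}$) it is faithful to that source.

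There is, however, one substantive inaccuracy in your final paragraph. You write that the hard step is the identification of trivial cofibrations with anodyne extensions, and conclude that cofibrant generation is witnessed by $\mathfrak{M}$ and $\Lambda_{\mathcal{I}}$. This is not what Cisinski proves, and it is not in general true. Proposition~\ref{prop:cisinski1}(ii) in the present paper records only the inclusion $\mathcal{A}_{\mathcal{I}}\subseteq\mathcal{C}_{\mathcal{I}}\cap\mathcal{W}_{\mathcal{I}}$, and Proposition~\ref{prop:cisinski1}(i) asserts that the \emph{fibrant objects} coincide with the naively fibrant ones, not that fibrations coincide with naive fibrations. In Cisinski's argument the class $\mathcal{W}_{\mathcal{I}}$ is shown to be an accessible (``admissible'') localizer, and a Smith-type recognition theorem then yields a generating set for the trivial cofibrations; that set need not be $\Lambda_{\mathcal{I}}$. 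Your sketch would go through if you replaced the claimed equality $\mathcal{A}_{\mathcal{I}}=\mathcal{C}_{\mathcal{I}}\cap\mathcal{W}_{\mathcal{I}}$ by the weaker input actually needed, namely that $\mathcal{A}_{\mathcal{I}}\subseteq\mathcal{C}_{\mathcal{I}}\cap\mathcal{W}_{\mathcal{I}}$ and that $\mathcal{W}_{\mathcal{I}}$ satisfies the solution-set condition required by Smith's theorem.
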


It is to be said that Cisinski's original theorem {\cite{cisinski-topos}} is
more general since there is a further parameter allowed, namely an arbitrary
set of monos $\mathfrak{S}$ can be added to $\Lambda_0$. Theorem
\ref{theo:cisinski} above states thus the special case when $\mathfrak{S}=
\varnothing$ (which is enough for our purposes). As pointed out by Jardine
{\cite{cahoth}}, in the general case the same homotopy theory is presented by
Bousfield-localising by $S$ a model structure obtained with the above process
for $\mathfrak{S}= \varnothing$.

Since model structures on topoi constructed following the recipe of theorem
\ref{theo:cisinski} are fully determined by the ``input'' interval
$\mathcal{I}$, we shall call such model structures $\mathcal{I}$-model
structures. Since we will be dealing with different $\mathcal{I}$-model
structures on the same topos $\mathcal{E}$, let us make the convention to
write $\left( \mathcal{E}, \mathcal{I} \right)$ when seeing $\mathcal{E}$ as
an ``$\mathcal{I}$-model category'' with respect to the interval
$\mathcal{I}$.

Next we compile some useful facts about $\mathcal{I}$-model structures.

\begin{notation}
  We shall write $\mathcal{W}_{\mathcal{I}}$ for the class of
  $\mathcal{I}$-weak equivalences and $\mathcal{F}_{\mathcal{I}}$ for the
  class of $\mathcal{I}$- fibrations.
\end{notation}

\begin{remark}
  \label{rem:homoequs1}An $\mathcal{I}$-homotopy equivalence is always an
  $\mathcal{I}$-weak equivalence. 
\end{remark}

\begin{proposition}
  \label{prop:cisinski1}In an $\mathcal{I}$-model structure:
  \begin{enumerateroman}
    \item $X \in \mathcal{E}$ is $\mathcal{I}$-fibrant if and only if it is
    $\mathcal{I}$-naively fibrant;
    
    \item $\Lambda_{\mathcal{I}} \subseteq \mathcal{A}_{\mathcal{I}} \subseteq
    \mathcal{C}_{\mathcal{I}} \cap \mathcal{W}_{\mathcal{I}}$;
    
    \item $\partial_X^e \in \mathcal{A}_{\mathcal{I}}$ for $e \in \{0, 1\}$
    and all $X \in \mathcal{E}$.
  \end{enumerateroman}
\end{proposition}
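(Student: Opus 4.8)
The plan is to prove (ii) and (iii) first, since both feed into (i), leaning throughout on the construction behind Theorem \ref{theo:cisinski} (following \cite{cisinski-topos}). Two of the three inclusions in (ii) are formal. The containment $\Lambda_{\mathcal{I}} \subseteq \mathcal{A}_{\mathcal{I}}$ is immediate from $\mathcal{A}_{\mathcal{I}} = \Lambda_{\mathcal{I}} - \tmop{cof}$, as every member of a set of maps has the left lifting property against that set's injectives. For $\mathcal{A}_{\mathcal{I}} \subseteq \mathcal{C}_{\mathcal{I}}$ I would first check that every generator is a mono: the interval preserves monos, unions of subobjects are monos in a topos (Remark \ref{rem:unions}), and the union recipe defining $\Lambda^0_{\mathcal{I}}$ and the operation $\tmop{sat}$ keep us inside the monos. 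Since the monos are closed under pushout, transfinite composition and retract, $\Lambda_{\mathcal{I}} - \tmop{cof}$ lands in the monos, which are exactly the cofibrations by Theorem \ref{theo:cisinski}.

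The substance of (ii) is $\mathcal{A}_{\mathcal{I}} \subseteq \mathcal{W}_{\mathcal{I}}$. I would show directly that any anodyne extension $f : X \to Y$ induces a bijection $f^{\ast} : \mathcal{E}(Y,T)/_{\sim} \to \mathcal{E}(X,T)/_{\sim}$ for every naively fibrant $T$, which is precisely the defining condition of a weak equivalence. Surjectivity is a plain lifting argument: $T \to 1$ is a naive fibration, so it has the right lifting property against $\Lambda_{\mathcal{I}}$ and hence against all of $\mathcal{A}_{\mathcal{I}} = \Lambda_{\mathcal{I}} - \tmop{cof}$, so any $X \to T$ extends over $f$. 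Injectivity is the delicate point. Given $g_0, g_1 : Y \to T$ with $g_0 \circ f \sim_{\mathcal{I}} g_1 \circ f$, the homotopy on $X$ together with $g_0, g_1$ glues to a partial homotopy out of the subobject $I(f)(I(X)) \cup \partial^0_Y(Y) \cup \partial^1_Y(Y) \hookrightarrow I(Y)$, where the ends fit by the cartesian cylinder axiom of Definition \ref{def-homdat}; this inclusion is the map $I(f) \vee [\partial^0_Y, \partial^1_Y]$, which is anodyne because $\mathcal{A}_{\mathcal{I}}$ is closed under the operation $g \mapsto I(g) \vee [\partial^0, \partial^1]$ built into $\tmop{sat}$, and filling it against $T \to 1$ produces the desired $\mathcal{I}$-homotopy $g_0 \sim_{\mathcal{I}} g_1$.

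For (iii), fix $e \in \{0,1\}$ and use the identity $\partial^e_X = I(\varnothing \to X) \vee \partial^e_X$. Indeed $I$ preserves colimits, so $I(\varnothing) = \varnothing$, and the union of the subobjects $\varnothing \hookrightarrow I(X)$ and $\partial^e_X : X \hookrightarrow I(X)$ is just $\partial^e_X$; thus $\partial^e_X$ is the value at the mono $\varnothing \to X$ of the very operation $m \mapsto I(m) \vee \partial^e_{\tmop{cod}(m)}$ generating $\Lambda^0_{\mathcal{I}}$. I would then prove that the class of monos $m$ with $I(m) \vee \partial^e_{\tmop{cod}(m)} \in \mathcal{A}_{\mathcal{I}}$ is saturated and contains the cellular model $\mathfrak{M}$; since $\mathfrak{M} - \tmop{cof}$ is all monos and $\varnothing \to X$ is a mono, this yields $\partial^e_X \in \mathcal{A}_{\mathcal{I}}$. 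The closure under the saturation operations is exactly what the higher $\Lambda^n_{\mathcal{I}}$ and the map $\tmop{sat}$ are engineered to provide; reproducing this combinatorial lemma of Cisinski, which also supplies the closure used for injectivity in (ii), is the main obstacle of the whole proposition.

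Finally, (i). The easy direction is that a fibrant object is naively fibrant: its map to $1$ is a fibration, hence lifts against all trivial cofibrations, and $\Lambda_{\mathcal{I}} \subseteq \mathcal{A}_{\mathcal{I}} \subseteq \mathcal{C}_{\mathcal{I}} \cap \mathcal{W}_{\mathcal{I}}$ by (ii). For the converse I would establish the equality $\mathcal{A}_{\mathcal{I}} = \mathcal{C}_{\mathcal{I}} \cap \mathcal{W}_{\mathcal{I}}$: given a trivial cofibration $j$, factor it by the small object argument on $\Lambda_{\mathcal{I}}$ as $j = p \circ i$ with $i \in \mathcal{A}_{\mathcal{I}}$ and $p$ a naive fibration; two-out-of-three makes $p$ a weak equivalence, and the auxiliary lemma that a naive fibration which is a weak equivalence has the right lifting property against all monos then makes $p$ a trivial fibration, so $j$ is a retract of $i$ and hence anodyne. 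With $\mathcal{A}_{\mathcal{I}} = \mathcal{C}_{\mathcal{I}} \cap \mathcal{W}_{\mathcal{I}}$ the naive fibrations coincide with the fibrations, so naively fibrant objects are fibrant. That auxiliary lemma on weak-equivalence naive fibrations is the other hard input, and its proof rests on the homotopy theory assembled for (ii).
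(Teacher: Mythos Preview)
Your proposal is correct and in fact reconstructs the actual mathematics, whereas the paper's own proof is a bare citation: it just points to propositions 2.20, 2.23 and 2.28 of \cite{cisinski-topos} together with Remark~\ref{rem:unions} and stops. So there is no methodological difference to compare---the argument you outline \emph{is} essentially Cisinski's, which is what the paper is invoking. Your sketch of (ii) and (iii) matches the structure of \cite{cisinski-topos} closely (the saturation closure of $\mathcal{A}_{\mathcal{I}}$ under $m \mapsto I(m)\vee[\partial^0,\partial^1]$ and $m\mapsto I(m)\vee\partial^e$, the lifting-and-filling argument for bijectivity on homotopy classes, and the identification $\partial^e_X = I(\varnothing\to X)\vee\partial^e_X$), and your route to (i) via $\mathcal{A}_{\mathcal{I}} = \mathcal{C}_{\mathcal{I}}\cap\mathcal{W}_{\mathcal{I}}$ and the auxiliary lemma on trivial naive fibrations is exactly how Cisinski establishes it. In short: nothing is wrong, and you have simply unpacked what the paper left as a reference.
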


\begin{proof}
  By propositions {\cite[2.20]{cisinski-topos}},
  {\cite[2.23]{cisinski-topos}}, {\cite[2.28]{cisinski-topos}} and remark
  $\ref{rem:unions}$.
\end{proof}

\begin{remark}
  \label{rem:homoequs2}Suppose $w : X \longrightarrow Y$ is an
  $\mathcal{I}$-weak equivalence and $f \sim_{\mathcal{I}} w$. Then $f$ is an
  $\mathcal{I}$-weak equivalence by proposition \ref{prop:cisinski1}. It is
  further the case that $\sigma_X$ (see definition \ref{def:cyl}) is an
  $\mathcal{I}$-weak equivalence for each object $X \in \mathcal{E}$.
\end{remark}

\subsection{Quillen pairs induced by morphisms intervals}

\begin{lemma}
  \label{lem:weaki1}Suppose there is a morphism of intervals $\mathcal{I}
  \longrightarrow \mathcal{I}'$ which is an $\mathcal{I}$-weak equivalence
  component-wise. Then
  \[ \Lambda_{\mathcal{I}'} \subseteq \mathcal{C}_{\mathcal{I}} \cap
     \mathcal{W}_{\mathcal{I}} \]
\end{lemma}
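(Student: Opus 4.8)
The plan is to show that every member of $\Lambda_{\mathcal{I}'}$ is an $\mathcal{I}$-trivial cofibration, that is a mono lying in $\mathcal{W}_{\mathcal{I}}$. The cofibration half is immediate: each generator is by construction the inclusion of a union of subobjects into $I'$ of some object, hence a mono, so $\Lambda_{\mathcal{I}'}\subseteq\mathcal{C}_{\mathcal{I}}$. All the weight therefore falls on proving $\Lambda_{\mathcal{I}'}\subseteq\mathcal{W}_{\mathcal{I}}$, which I would establish by induction on the level $n$, showing $\Lambda^n_{\mathcal{I}'}\subseteq\mathcal{C}_{\mathcal{I}}\cap\mathcal{W}_{\mathcal{I}}$ and then taking the union over $n$.

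Before the induction I would record two facts about the $\mathcal{I}$-model structure of Theorem \ref{theo:cisinski}. First, $\partial^e_{\mathcal{I}',X}$ is an $\mathcal{I}$-weak equivalence for every $X$: by the morphism-of-intervals axiom one has $\partial^e_{\mathcal{I}',X}=\iota_X\circ\partial^e_{\mathcal{I},X}$, and both factors are $\mathcal{I}$-weak equivalences ($\partial^e_{\mathcal{I},X}$ by Proposition \ref{prop:cisinski1}, $\iota_X$ by hypothesis), so 2-out-of-3 applies. Second, and this is the ingredient I expect to matter most, the functor $I'$ preserves $\mathcal{I}$-weak equivalences: for $f:X\to Y$ an $\mathcal{I}$-weak equivalence, naturality of $\partial^e_{\mathcal{I}'}$ gives $I'(f)\circ\partial^e_{\mathcal{I}',X}=\partial^e_{\mathcal{I}',Y}\circ f$, whose right-hand side is a composite of $\mathcal{I}$-weak equivalences; since $\partial^e_{\mathcal{I}',X}$ is one as well, 2-out-of-3 forces $I'(f)$ to be an $\mathcal{I}$-weak equivalence. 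As $I'$ also preserves monos (Definition \ref{def-homdat}), it carries $\mathcal{I}$-trivial cofibrations to $\mathcal{I}$-trivial cofibrations.

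The base and inductive steps then run on a common template. A generator $I'(m)\vee\partial^e$ at level $0$ (resp. $I'(l)\vee[\partial^0,\partial^1]$ at level $n+1$) is, by the cartesian-cylinder pullback square of Definition \ref{def-homdat} and its coproduct variant in the subsequent remark, together with Remark \ref{rem:unions}, exactly the pushout-product map $\mu:P\to I'(B)$ (resp. $\mu:P\to I'(D)$) out of the pushout $P$ over the relevant corner, the pullback property being what identifies the intersection of the two subobjects with that corner. One of the two coprojections into $P$ is the cobase change of an $\mathcal{I}$-trivial cofibration and hence is itself one: in the level-$0$ case it is the cobase change of $\partial^e_{\mathcal{I}',A}$ along $m$ (the coprojection out of $B$), and in the level-$(n+1)$ case it is the cobase change of $l+l$ along $[\partial^0_C,\partial^1_C]$, where $l+l$ is a coproduct of $\mathcal{I}$-trivial cofibrations and hence an $\mathcal{I}$-trivial cofibration (the coprojection out of $I'(C)$). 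Composing $\mu$ with this coprojection yields $\partial^e_{\mathcal{I}',B}$ and $I'(l)$ respectively, both already known to be $\mathcal{I}$-weak equivalences (the latter by the preservation fact above applied to the $\mathcal{I}$-weak equivalence $l$ supplied by the inductive hypothesis). Since both the coprojection and the composite are $\mathcal{I}$-weak equivalences, 2-out-of-3 gives that $\mu$ is one; being a mono, $\mu$ is an $\mathcal{I}$-trivial cofibration, which closes the induction.

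The main obstacle, as flagged, is the preservation statement that $I'$ sends $\mathcal{I}$-weak equivalences to $\mathcal{I}$-weak equivalences, for without it the leg $I'(l)$ at level $n+1$ cannot be controlled; the alternative coprojection sits over $[\partial^0_C,\partial^1_C]$, a fold-type map which is not a weak equivalence, so that leg is useless and 2-out-of-3 cannot be run from it. A secondary point requiring care is the identification of each $\vee$-generator with a genuine pushout, which is precisely what the pullback (cartesian) squares for $I'$ guarantee, so that Remark \ref{rem:unions} computes the union as the pushout over the expected corner and legitimizes the cobase-change arguments above.
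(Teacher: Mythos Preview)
Your proposal is correct and follows essentially the same route as the paper's own proof: an induction on $n$ using the pushout description of the union (Remark~\ref{rem:unions}), the cobase-change of an $\mathcal{I}$-trivial cofibration to control one coprojection, and 2-of-3 against $\partial^e_{\mathcal{I}',B}$ (base case) or $I'(l)$ (inductive step) to conclude. The only organizational difference is that you isolate ``$I'$ preserves $\mathcal{I}$-weak equivalences'' as a standalone fact before the induction, whereas the paper derives $I'(t)\in\mathcal{W}_{\mathcal{I}}$ inline by chasing the same naturality square; the content is identical.
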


\begin{proof}
  Let $\iota : \mathcal{I} \longrightarrow \mathcal{I}'$ be a morphism of
  intervals which is a section-wise $\mathcal{I}$-weak equivalence. Let $j : A
  \rightarrowtail B$ be in $\mathfrak{M}$ and $e \in \{0, 1\}$. We have
  $\iota_A \circ \partial^e_{\mathcal{I}, A} = \partial^e_{\mathcal{I}', A}$
  and $\iota_B \circ \partial^e_{\mathcal{I}, B} = \partial^e_{\mathcal{I}',
  B}$ since $\iota$ is a morphism of intervals, hence
  $\partial^e_{\mathcal{I}', A}$ and $\partial^e_{\mathcal{I}', B}$ are
  $\mathcal{I}$-trivial cofibrations. It follows that $t_1$ in \

  
  \begin{center}
$
\xy
{\ar@{>->}^j (0,0)*+++{A} ; (35,0)*+{B}};
{\ar@{>->}_{\partial^e_{\mathcal{I}',A}} (0,0)*+++{A} ; (0,-30)*+{I(A)}};
{\ar@{>->}_{I'(j)} (0,-30)*+++{I(A)} ; (35,-30)*+{I(B)}};
{\ar@{>->}^{\partial^e_{\mathcal{I}',B}} (35,0)*+++{B} ; (35,-30)*+{I(B)}};
{\ar@{>.>}^t (22.5,-18)*+++{I(A) \vee B} ; (35,-30)*+{I(B)}};
{\ar@{>->}_{t_1} (35,0)*+++{B} ; (22.5,-18)*++{I(A) \vee B}};
{\ar@{>->}^{t_2} (0,-30)*+++{I(A)} ; (22.5,-18)*++{I(A) \vee B}};
{\ar@{-} (5,-5)*{} ; (5,-2.5)};
{\ar@{-} (5,-5)*{} ; (2.5,-5)};
\endxy
$
\end{center}
  
  is an $\mathcal{I}$-trivial cofibration and so is $t$ by remark
  $\ref{rem:unions}$. Hence $\Lambda^0_{\mathcal{I}'} \subseteq
  \mathcal{C}_{\mathcal{I}} \cap \mathcal{W}_{\mathcal{I}} $.
  
  Let $n > 0$ and suppose $\Lambda^{n - 1}_{\mathcal{I}'} \subseteq
  \mathcal{C}_{\mathcal{I}} \cap \mathcal{W}_{\mathcal{I}} $. Let $t \in
  \Lambda_{\mathcal{I}'}^{n - 1}$. Then $t + t$ is an $\mathcal{I}$-trivial
  cofibration and so is $k_2$ in

  
  \begin{center}
$
\xy
{\ar@{>->}^{t+t} (0,0)*+++{A+A} ; (35,0)*+{B+B}};
{\ar@{>->}_{\left [ \partial^0_{\mathcal{I}',A},\partial^1_{\mathcal{I}',A}\right ]} (0,0)*+++{A+A} ; (0,-30)*+{I'(A)}};
{\ar@{>->}_{I'(t)} (0,-30)*+++{I'(A)} ; (35,-30)*+{I'(B)}};
{\ar@{>->}^{\left [ \partial^0_{\mathcal{I}',B},\partial^1_{\mathcal{I}',B}\right ]} (35,0)*+++{B+B} ; (35,-30)*+{I'(B)}};
{\ar@{>.>}^{k} (22.5,-18)*+++{I'(A) \vee B+B} ; (35,-30)*+{I'(B)}};
{\ar@{>->}_{k_1} (35,0)*+++{B+B} ; (22.5,-18)*++{I'(A) \vee B+B}};
{\ar@{>->}^{k_2} (0,-30)*+++{I'(A)} ; (22.5,-18)*++{I'(A) \vee B+B}};
{\ar@{-} (6,-6)*{} ; (6,-3.5)};
{\ar@{-} (6,-6)*{} ; (3.5,-6)};
{\ar@{-} (17,-11.5)*{} ; (19.5,-11.5)};
{\ar@{-} (17,-11.5)*{} ; (17,-13.5)};
\endxy
$
\end{center}
  
  On the other hand, chasing around

  
  \begin{center}
$
\xy
{\ar@{>->}^t (0,0)*+++{A} ; (25,0)*+{B}};
{\ar@{>->}_{\partial^e_{\mathcal{I}',A}} (0,0)*+++{A} ; (0,-20)*+{I'(A)}};
{\ar@{>->}_{I'(t)} (0,-20)*+++{I'(A)} ; (25,-20)*+{I'(B)}};
{\ar@{>->}^{\partial^e_{\mathcal{I}',B}} (25,0)*+++{B} ; (25,-20)*+{I'(B)}};
\endxy
$
\end{center}
  
  one finds that $I' (t)$ is an $\mathcal{I}$-weak equivalence so
  $k$ is an $\mathcal{I}$-trivial cofibration by remark \ref{rem:unions}.
  
  Hence
  \[ \Lambda^n_{\mathcal{I}'} \subseteq \mathcal{C}_{\mathcal{I}} \cap
     \mathcal{W}_{\mathcal{I}} \]
  for all $n \geqslant 0$. 
\end{proof}

\begin{proposition}
  \label{prop:weaki1}Suppose there is a morphism of intervals $\mathcal{I}
  \longrightarrow \mathcal{I}'$ which is a section-wise $\mathcal{I}$-weak
  equivalence. Then
  \[ \mathcal{W}_{\mathcal{I}'} \subseteq \mathcal{W}_{\mathcal{I}} \]
  and $\tmop{id}_{\mathcal{E}} : ( \mathcal{E}, \mathcal{I}) \longrightarrow (
  \mathcal{E}, \mathcal{I}')$ is a right Quillen functor.
\end{proposition}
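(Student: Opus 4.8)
The plan is to deduce the statement from Lemma~\ref{lem:weaki1}, using throughout that the $\mathcal{I}$- and $\mathcal{I}'$-model structures have the same cofibrations. Indeed, by Theorem~\ref{theo:cisinski} the cofibrations of either structure are exactly the monos of $\mathcal{E}$, so $\mathcal{C}_{\mathcal{I}} = \mathcal{C}_{\mathcal{I}'}$. A morphism is then an $\mathcal{I}$-trivial fibration if and only if it is an $\mathcal{I}'$-trivial fibration, since in either case this amounts to having the right lifting property against all monos. In particular $\tmop{id}_{\mathcal{E}}$ preserves trivial fibrations in both directions.

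Next I would promote the generator-level inclusion of Lemma~\ref{lem:weaki1} to the full saturated class. The lemma yields $\Lambda_{\mathcal{I}'} \subseteq \mathcal{C}_{\mathcal{I}} \cap \mathcal{W}_{\mathcal{I}}$, and the right-hand side, being the class of $\mathcal{I}$-trivial cofibrations, is weakly saturated (closed under pushout, transfinite composition, coproduct and retract). Since $\mathcal{A}_{\mathcal{I}'} = \left( \Lambda_{\mathcal{I}'} \right) - \tmop{cof}$ is by definition the smallest weakly saturated class containing $\Lambda_{\mathcal{I}'}$, it follows that $\mathcal{A}_{\mathcal{I}'} \subseteq \mathcal{C}_{\mathcal{I}} \cap \mathcal{W}_{\mathcal{I}}$. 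Thus every $\mathcal{I}'$-anodyne extension is simultaneously a cofibration and an $\mathcal{I}$-weak equivalence. This gives the Quillen claim at once: the left adjoint $\tmop{id}_{\mathcal{E}} : ( \mathcal{E}, \mathcal{I}') \longrightarrow ( \mathcal{E}, \mathcal{I})$ preserves cofibrations (they coincide) and sends $\mathcal{I}'$-trivial cofibrations into $\mathcal{I}$-trivial cofibrations, hence is left Quillen; equivalently its right adjoint $\tmop{id}_{\mathcal{E}} : ( \mathcal{E}, \mathcal{I}) \longrightarrow ( \mathcal{E}, \mathcal{I}')$ is right Quillen.

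For the inclusion $\mathcal{W}_{\mathcal{I}'} \subseteq \mathcal{W}_{\mathcal{I}}$ I would argue by factorisation. Let $f \in \mathcal{W}_{\mathcal{I}'}$ and factor it in the $\mathcal{I}'$-model structure as $f = p \circ i$ with $i$ a cofibration and $p$ an $\mathcal{I}'$-trivial fibration. Two-out-of-three in the $\mathcal{I}'$-structure forces $i \in \mathcal{W}_{\mathcal{I}'}$, so $i$ is an $\mathcal{I}'$-trivial cofibration and hence $i \in \mathcal{A}_{\mathcal{I}'} \subseteq \mathcal{W}_{\mathcal{I}}$ by the preceding paragraph. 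By the first paragraph $p$ is also an $\mathcal{I}$-trivial fibration, so $p \in \mathcal{W}_{\mathcal{I}}$. Two-out-of-three in the $\mathcal{I}$-structure then gives $f = p \circ i \in \mathcal{W}_{\mathcal{I}}$.

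The genuine work is already carried out in Lemma~\ref{lem:weaki1}; the only step here that must be invoked rather than computed is the saturation argument of the second paragraph, namely that enlarging $\Lambda_{\mathcal{I}'}$ to $\mathcal{A}_{\mathcal{I}'}$ does not leave the class $\mathcal{C}_{\mathcal{I}} \cap \mathcal{W}_{\mathcal{I}}$ of $\mathcal{I}$-trivial cofibrations. I expect this to be the sole, and minor, obstacle.
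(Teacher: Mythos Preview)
Your argument has a genuine gap: in both the second and third paragraphs you tacitly assume that every $\mathcal{I}'$-trivial cofibration is an $\mathcal{I}'$-anodyne extension, i.e.\ that $\mathcal{C}_{\mathcal{I}'} \cap \mathcal{W}_{\mathcal{I}'} = \mathcal{A}_{\mathcal{I}'}$. But Proposition~\ref{prop:cisinski1}(ii) only records the inclusion $\mathcal{A}_{\mathcal{I}'} \subseteq \mathcal{C}_{\mathcal{I}'} \cap \mathcal{W}_{\mathcal{I}'}$, which may be strict in Cisinski's framework (naive fibrations need not be fibrations, only the fibrant \emph{objects} coincide). So from $\mathcal{A}_{\mathcal{I}'} \subseteq \mathcal{C}_{\mathcal{I}} \cap \mathcal{W}_{\mathcal{I}}$ you cannot yet conclude that $\tmop{id}_{\mathcal{E}}$ sends $\mathcal{I}'$-trivial cofibrations to $\mathcal{I}$-trivial cofibrations, nor that the map $i$ in your factorisation lies in $\mathcal{A}_{\mathcal{I}'}$.

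The paper avoids this by reversing the order of the two conclusions and arguing at the level of fibrant objects rather than trivial cofibrations. From $\Lambda_{\mathcal{I}'} \subseteq \mathcal{C}_{\mathcal{I}} \cap \mathcal{W}_{\mathcal{I}}$ one sees that every $\mathcal{I}$-fibration, in particular $!_X$ for $X$ $\mathcal{I}$-fibrant, has the right lifting property against $\Lambda_{\mathcal{I}'}$; hence $\mathcal{E}^{\tmop{nf}}_{\mathcal{I}} \subseteq \mathcal{E}^{\tmop{nf}}_{\mathcal{I}'}$. Since the weak equivalences of Theorem~\ref{theo:cisinski} are \emph{detected} by the naively fibrant objects (together with the morphism of intervals to compare the two homotopy relations on maps into such a $T$), this inclusion of test objects yields $\mathcal{W}_{\mathcal{I}'} \subseteq \mathcal{W}_{\mathcal{I}}$ directly. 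The Quillen statement then follows for free: cofibrations coincide, and $\mathcal{W}_{\mathcal{I}'} \subseteq \mathcal{W}_{\mathcal{I}}$ gives $\mathcal{C}_{\mathcal{I}'} \cap \mathcal{W}_{\mathcal{I}'} \subseteq \mathcal{C}_{\mathcal{I}} \cap \mathcal{W}_{\mathcal{I}}$ without ever needing to know whether anodyne extensions exhaust the trivial cofibrations.
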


\begin{proof}
  We have $\Lambda_{\mathcal{I}'} \subseteq \mathcal{C}_{\mathcal{I}} \cap
  \mathcal{W}_{\mathcal{I}}$ by lemma \ref{lem:weaki1}. Let $X \in
  \mathcal{E}$ be $\mathcal{I}$-naively fibrant. Then $!_X \in
  \mathcal{F}_{\mathcal{I}}$ by proposition \ref{prop:cisinski1}(i) so $!_X$
  has the right lifting property with respect to all $t \in
  \Lambda_{\mathcal{I}'}$. It follows that
  \[ \mathcal{E}^{\tmop{nf}}_{\mathcal{I}} \subseteq
     \mathcal{E}^{\tmop{nf}}_{\mathcal{I}'} \]
  and thus $\mathcal{W}_{\mathcal{I}'} \subseteq \mathcal{W}_{\mathcal{I}}$.
  Now both model structures have the same cofibrations, namely the monos.
  Hence $\tmop{id}_{\mathcal{E}} : ( \mathcal{E}, \mathcal{I}')
  \longrightarrow ( \mathcal{E}, \mathcal{I})$ preserves cofibrations and
  trivial cofibrations and is thus left Quillen.
\end{proof}

\begin{lemma}
  \label{lem:weaki2}Suppose there is a morphism of intervals $\mathcal{I}
  \longrightarrow \mathcal{I}'$ which is a section-wise $\mathcal{I}'$-weak
  equivalence. Then
  \[ \Lambda_{\mathcal{I}} \subseteq \mathcal{C}_{\mathcal{I}'} \cap
     \mathcal{W}_{\mathcal{I}'} \]
\end{lemma}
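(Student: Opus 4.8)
The plan is to mirror the proof of Lemma \ref{lem:weaki1}, interchanging the roles of $\mathcal{I}$ and $\mathcal{I}'$ in the cylinder and pushout-product constructions while keeping the ambient class of weak equivalences fixed as $\mathcal{W}_{\mathcal{I}'}$, and then to induct on the filtration $\Lambda_{\mathcal{I}} = \bigcup_{n} \Lambda^n_{\mathcal{I}}$. Throughout I would freely use that $\mathcal{W}_{\mathcal{I}'}$ enjoys the two-out-of-three property and that $\mathcal{I}'$-trivial cofibrations (monos in $\mathcal{W}_{\mathcal{I}'}$) are stable under coproduct and cobase change, both being standard consequences of the model structure produced by Theorem \ref{theo:cisinski}.

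First I would dispatch the base case $\Lambda^0_{\mathcal{I}} \subseteq \mathcal{C}_{\mathcal{I}'} \cap \mathcal{W}_{\mathcal{I}'}$. Let $\iota : \mathcal{I} \longrightarrow \mathcal{I}'$ be the given section-wise $\mathcal{I}'$-weak equivalence. For every object $C$ the defining identity $\iota_C \circ \partial^e_{\mathcal{I}, C} = \partial^e_{\mathcal{I}', C}$ holds by definition \ref{def:cyl}. Here $\partial^e_{\mathcal{I}', C}$ is $\mathcal{I}'$-anodyne, hence an $\mathcal{I}'$-weak equivalence, by proposition \ref{prop:cisinski1}(iii) applied to $\mathcal{I}'$, and $\iota_C$ is an $\mathcal{I}'$-weak equivalence by hypothesis. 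This is the one place where the argument genuinely departs from Lemma \ref{lem:weaki1}: instead of \emph{composing} two weak equivalences to build $\partial^e_{\mathcal{I}'}$, I must run two-out-of-three \emph{backwards} on the factorization above to conclude that $\partial^e_{\mathcal{I}, C}$ is an $\mathcal{I}'$-weak equivalence. Since $\partial^e_{\mathcal{I}, C}$ is also a mono, being the composite of a coproduct insertion with the mono $[\partial^0_C, \partial^1_C]$ from definition \ref{def-homdat}(ii), it is an $\mathcal{I}'$-trivial cofibration, and this holds for all $C$.

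With that in hand the pushout-product step transports verbatim. Fixing $m : A \rightarrowtail B$ in $\mathfrak{M}$ and $e \in \{0, 1\}$, the map $t_1 : B \rightarrowtail I(A) \vee B$ is a cobase change of the $\mathcal{I}'$-trivial cofibration $\partial^e_{\mathcal{I}, A}$ and is therefore itself an $\mathcal{I}'$-trivial cofibration; as $t \circ t_1 = \partial^e_{\mathcal{I}, B}$ is likewise $\mathcal{I}'$-trivial, two-out-of-three forces the comparison map $t = I(m) \vee \partial^e_{\mathcal{I}, B}$ to be an $\mathcal{I}'$-weak equivalence, while remark \ref{rem:unions} identifies it as the inclusion of a union of subobjects and hence a mono. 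Thus every generator $t$ of $\Lambda^0_{\mathcal{I}}$ lies in $\mathcal{C}_{\mathcal{I}'} \cap \mathcal{W}_{\mathcal{I}'}$.

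For the inductive step I would assume $\Lambda^{n-1}_{\mathcal{I}} \subseteq \mathcal{C}_{\mathcal{I}'} \cap \mathcal{W}_{\mathcal{I}'}$ and take $t \in \Lambda^{n-1}_{\mathcal{I}}$. Then $t + t$, and hence its cobase change $k_2$, is an $\mathcal{I}'$-trivial cofibration; chasing the naturality square for the endofunctor $I$ — using that $\partial^e_{\mathcal{I}, A}$ and $\partial^e_{\mathcal{I}, B}$ are $\mathcal{I}'$-weak equivalences by the base case and that $t$ is one by the inductive hypothesis — shows $I(t)$ to be an $\mathcal{I}'$-weak equivalence, whence $k = I(t) \vee [\partial^0, \partial^1]$ is an $\mathcal{I}'$-weak equivalence by two-out-of-three and a mono by remark \ref{rem:unions}. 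This gives $\Lambda^n_{\mathcal{I}} = \tmop{sat}(\Lambda^{n-1}_{\mathcal{I}}) \subseteq \mathcal{C}_{\mathcal{I}'} \cap \mathcal{W}_{\mathcal{I}'}$, and taking the union over $n$ finishes the proof. The only real obstacle, as flagged above, is the reversed application of two-out-of-three in the base case; the pushout-product bookkeeping is identical to Lemma \ref{lem:weaki1}.
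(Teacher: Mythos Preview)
Your proposal is correct and is precisely the argument the paper has in mind: the paper's own proof reads in full ``Same argument as for lemma \ref{lem:weaki1}, save for a different case of the 2-of-3 property,'' and the different case is exactly the reversed application you flag in the base step, where $\partial^e_{\mathcal{I},C}$ is extracted from the factorization $\iota_C \circ \partial^e_{\mathcal{I},C} = \partial^e_{\mathcal{I}',C}$ rather than built as a composite. Your expansion of the inductive step, with the cylinder functor $I$ in place of $I'$, is the appropriate transcription.
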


\begin{proof}
  Same argument as for lemma \ref{lem:weaki1}, save for a different case of
  the 2-of-3 property.
\end{proof}

\begin{proposition}
  \label{prop:weaki2}Suppose there is a morphism of intervals $\mathcal{I}
  \longrightarrow \mathcal{I}'$ which is a section-wise $\mathcal{I}'$-weak
  equivalence. Then
  \[ \mathcal{W}_{\mathcal{I}} \subseteq \mathcal{W}_{\mathcal{I}'} \]
  and $\tmop{id} : ( \mathcal{E}, \mathcal{I}) \longrightarrow ( \mathcal{E},
  \mathcal{I}')$ is a left Quillen functor.
\end{proposition}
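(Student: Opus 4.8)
The plan is to mirror the proof of Proposition \ref{prop:weaki1}, interchanging the roles of $\mathcal{I}$ and $\mathcal{I}'$ and invoking Lemma \ref{lem:weaki2} in place of Lemma \ref{lem:weaki1}. First I would apply Lemma \ref{lem:weaki2} to the given morphism $\iota : \mathcal{I} \longrightarrow \mathcal{I}'$ to obtain $\Lambda_{\mathcal{I}} \subseteq \mathcal{C}_{\mathcal{I}'} \cap \mathcal{W}_{\mathcal{I}'}$, so that every generating anodyne extension for $\mathcal{I}$ is an $\mathcal{I}'$-trivial cofibration.

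Next I would compare the fibrant objects. Let $X$ be $\mathcal{I}'$-naively fibrant. By Proposition \ref{prop:cisinski1}(i) the map $!_X : X \longrightarrowlim 1$ is an $\mathcal{I}'$-fibration, hence has the right lifting property with respect to every $\mathcal{I}'$-trivial cofibration, and in particular with respect to all $t \in \Lambda_{\mathcal{I}}$ by the previous step. Therefore $X$ is $\mathcal{I}$-naively fibrant, which yields the inclusion $\mathcal{E}^{\tmop{nf}}_{\mathcal{I}'} \subseteq \mathcal{E}^{\tmop{nf}}_{\mathcal{I}}$.

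The crux is converting this inclusion of fibrant objects into $\mathcal{W}_{\mathcal{I}} \subseteq \mathcal{W}_{\mathcal{I}'}$. The subtlety, which I expect to be the main obstacle, is that $\mathcal{I}$- and $\mathcal{I}'$-weak equivalences are tested not only against different classes of fibrant objects but also modulo different homotopy relations, while Remark \ref{rem:homoto-equivalence} only provides that $u \sim_{\mathcal{I}'} v$ implies $u \sim_{\mathcal{I}} v$. I would resolve this by showing that the two relations in fact coincide on maps into any $\mathcal{I}'$-fibrant target $T$. Given an $\mathcal{I}$-homotopy $h : I(X) \longrightarrow T$ between $u$ and $v$, the component $\iota_X : I(X) \longrightarrow I'(X)$ is an $\mathcal{I}'$-weak equivalence by hypothesis, so $\iota_X^{\ast}$ is a bijection on $\mathcal{I}'$-homotopy classes of maps into the $\mathcal{I}'$-fibrant object $T$; lifting $h$ through $\iota_X$ up to $\mathcal{I}'$-homotopy and using $\iota \circ \partial^e_{\mathcal{I}} = \partial^e_{\mathcal{I}'}$ together with transitivity of $\sim_{\mathcal{I}'}$ (valid since $T$ is $\mathcal{I}'$-fibrant, by Theorem \ref{theo:cisinski}) yields $u \sim_{\mathcal{I}'} v$. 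Consequently, for every $T \in \mathcal{E}^{\tmop{nf}}_{\mathcal{I}'} \subseteq \mathcal{E}^{\tmop{nf}}_{\mathcal{I}}$ the quotient sets $\mathcal{E}(X,T)/_{\sim_{\mathcal{I}}}$ and $\mathcal{E}(X,T)/_{\sim_{\mathcal{I}'}}$ agree, so any $f \in \mathcal{W}_{\mathcal{I}}$ induces bijections on $\mathcal{I}'$-homotopy classes into all $\mathcal{I}'$-fibrant targets, that is $f \in \mathcal{W}_{\mathcal{I}'}$.

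Finally, the Quillen assertion would follow immediately: both model structures have the monos as cofibrations, so $\tmop{id}$ preserves cofibrations; and an $\mathcal{I}$-trivial cofibration is a mono lying in $\mathcal{W}_{\mathcal{I}} \subseteq \mathcal{W}_{\mathcal{I}'}$, hence an $\mathcal{I}'$-trivial cofibration, so $\tmop{id}$ preserves trivial cofibrations as well and is therefore left Quillen. Apart from the homotopy-relation comparison in the third paragraph, the argument is a routine transcription of the proof of Proposition \ref{prop:weaki1}.
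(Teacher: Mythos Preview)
Your proof is correct and follows the same overall strategy as the paper: the paper's proof of Proposition~\ref{prop:weaki2} is literally ``Same argument as for proposition~\ref{prop:weaki1}'', and your first, second and fourth paragraphs are exactly the transcription of that argument with $\mathcal{I}$ and $\mathcal{I}'$ swapped and Lemma~\ref{lem:weaki2} in place of Lemma~\ref{lem:weaki1}.

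Where you go beyond the paper is the third paragraph. The paper, in proving Proposition~\ref{prop:weaki1}, passes directly from $\mathcal{E}^{\mathrm{nf}}_{\mathcal{I}} \subseteq \mathcal{E}^{\mathrm{nf}}_{\mathcal{I}'}$ to $\mathcal{W}_{\mathcal{I}'} \subseteq \mathcal{W}_{\mathcal{I}}$ without comment, and its proof of Proposition~\ref{prop:weaki2} inherits this silence. You correctly observe that this step is not automatic, since the weak equivalences are detected not only against different classes of fibrant objects but also modulo different homotopy relations, and Remark~\ref{rem:homoto-equivalence} only gives one implication between $\sim_{\mathcal{I}}$ and $\sim_{\mathcal{I}'}$. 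Your resolution --- using that $\iota_X$ is an $\mathcal{I}'$-weak equivalence to lift an $\mathcal{I}$-homotopy through $\iota_X$ up to $\sim_{\mathcal{I}'}$, and then invoking transitivity of $\sim_{\mathcal{I}'}$ on maps into an $\mathcal{I}'$-fibrant $T$ --- is sound and fills a genuine gap in the paper's exposition. So your argument is not a different route but a more careful execution of the intended one.
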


\begin{proof}
  Same argument as for proposition \ref{prop:weaki1}.
\end{proof}

\section{Directed homotopy theories\label{sec:diho}}

\subsection{Dihomotopy}Let $\Delta_d$ be the unit interval
equipped with the discrete order, as in example \ref{exa:1}.

\begin{remark}
  \label{rem:interval-inclusions}For any $P \in \mathbb{P}$, let
 \[  \begin{array}{llll}
          k^e_{d, P} : & P & \longrightarrow & \Delta_d\\
          & x & \longmapsto & e
        \end{array}  \] 
  be the constant dimaps with values $e = 0$ and $e = 1$, respectively.
  Let
  \[ I_d  \overset{\tmop{def} .}{=} (-) \times h_{\mathbb{P} } (\Delta_d) :
     \tmop{Sh} ( \mathbb{P}, \tau) \longrightarrow \tmop{Sh} ( \mathbb{P}
     \tau) \]
  be the endofunctor acting by taking the product with $h_{\mathbb{P} }
  (\Delta_d)$. There are natural transformations
  \[  \begin{array}{l}
       \partial^e_d : \tmop{id}_{\tmop{Sh} ( \mathbb{P}, \tau)}
       \Longrightarrow I_d
     \end{array}  \]
  for $e \in \{0, 1\}$, given by
  \[ \begin{array}{llll}
       \partial^e_{d, F} : & F & \Longrightarrow & F \times h_{\mathbb{P} }
       (\Delta_d)\\
       &  &  & \\
       \partial^e_{d, F, P} : & F (P) & \longrightarrow & F (P) \times
       \mathbb{L} \left( P, \Delta_d \right)\\
       & x & \longmapsto & (x, k^e_{d, P})
     \end{array} \]
  There is furthermore the natural transformation $\sigma_d : I
  \Longrightarrow \tmop{id}_{\tmop{Sh} ( \mathbb{P}, \tau)}$ given
  components by the first projection $\sigma_{d, F}  \overset{\tmop{def}
  .}{=} \pi_{1, F} : F \times h_{\mathbb{P} } (\Delta_d) \Longrightarrow F$.
  Obviously, the quadruple $(I_d, \partial^0_d, \partial^1_d, \sigma_d)$ is a
  cylinder.
\end{remark}

\begin{lemma}
  \label{lem:cart-cylinder}The cylinder $\text{$(I_d, \partial^0_d,
  \partial^1_d, \sigma_d)$}$ is cartesian.
\end{lemma}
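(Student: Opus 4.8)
The goal is to show that the cylinder $(I_d, \partial^0_d, \partial^1_d, \sigma_d)$ is cartesian, meaning that for every mono $j : A \rightarrowtail B$ and each $e \in \{0,1\}$, the naturality square

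\begin{center}
$
\xy
{\ar@{>->}^{j} (0,0)*+++{A} ; (25,0)*+{B}};
{\ar@{>->}_{\partial^e_{d,A}} (0,0)*+++{A} ; (0,-20)*+{I_d(A)}};
{\ar@{>->}_{I_d(j)} (0,-20)*+++{I_d(A)} ; (25,-20)*+{I_d(B)}};
{\ar@{>->}^{\partial^e_{d,B}} (25,0)*+++{B} ; (25,-20)*+{I_d(B)}};
\endxy
$
\end{center}

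is a pullback in $\tmop{Sh}(\mathbb{P},\tau)$. The plan is to reduce this to a purely set-theoretic verification section by section, using the fact that limits of sheaves are computed pointwise. For a fixed $P \in \mathbb{P}$, the square evaluated at $P$ involves the sets $A(P)$, $B(P)$, $A(P) \times \mathbb{L}(P,\Delta_d)$ and $B(P) \times \mathbb{L}(P,\Delta_d)$, with the vertical maps sending $x$ to $(x, k^e_{d,P})$ and the map $I_d(j)_P$ acting as $j_P \times \tmop{id}$.

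First I would describe the candidate pullback of $I_d(j)_P$ along $\partial^e_{d,B,P}$ explicitly: it is the set of pairs $(a, (b, \phi)) \in A(P) \times (B(P) \times \mathbb{L}(P,\Delta_d))$ with $(j_P(a), \phi) = (b, k^e_{d,P})$, i.e.\ $b = j_P(a)$ and $\phi = k^e_{d,P}$. Second I would exhibit the canonical comparison map from $A(P)$ into this pullback, sending $a \mapsto (a, (j_P(a), k^e_{d,P}))$, and check it is a bijection: surjectivity is immediate from the description of the pullback, and injectivity follows since the first coordinate already recovers $a$. This establishes that each square at level $P$ is a pullback of sets.

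Third, since the forgetful functor from sheaves to presheaves creates limits and limits of presheaves are computed objectwise, the pointwise pullback squares assemble into a pullback square in $\tmop{Sh}(\mathbb{P},\tau)$. The monos in the square (marked with tails) are unproblematic: $j$ is mono by hypothesis, $I_d(j) = j \times \tmop{id}$ is mono since products preserve monos in a topos, and the $\partial^e_{d}$ are monos because the inclusion $k^e_{d,P}$ picks out a single element, making each component injective. Thus the naturality square is a pullback for every mono $j$ and each $e$, which is exactly cartesianness.

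I do not expect a genuine obstacle here, as the argument is essentially the standard observation that a product cylinder $(-) \times \mathcal{I}$ is cartesian whenever the endpoint inclusions of the interval object are suitably well-behaved; the only point requiring a little care is that the interval object is $h_{\mathbb{P}}(\Delta_d)$ and the homotopy coordinate $\mathbb{L}(P,\Delta_d)$ is untouched by $j$, so the pullback condition collapses to comparing the $A$- and $B$-components alone. The verification that the comparison map is a bijection at each $P$ is the crux, but it is routine once the pullback is written out.
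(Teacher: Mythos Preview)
Your proposal is correct and follows essentially the same approach as the paper: both verify the pullback property sectionwise in $\mathbf{Set}$ and then conclude for sheaves. The paper phrases the verification as a direct check of the universal property (taking an arbitrary cone $F$ and producing the mediating map as $\sigma_{d,K,P}\circ\gamma_P$), whereas you compute the set-theoretic pullback explicitly and compare; these are equivalent presentations of the same elementary argument. One cosmetic slip: your description of the pullback groups the coordinates as $(a,(b,\phi))\in A(P)\times\bigl(B(P)\times\mathbb{L}(P,\Delta_d)\bigr)$, but the fibred product $I_d(A)\times_{I_d(B)}B$ naturally has elements $((a,\phi),b)\in\bigl(A(P)\times\mathbb{L}(P,\Delta_d)\bigr)\times B(P)$; your constraint $(j_P(a),\phi)=(b,k^e_{d,P})$ is nonetheless the right one, so this is purely notational.
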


\begin{proof}
  Let $F, K, L \in \tmop{Sh} \left( \mathbb{P}, \tau \right)$, $\beta : F
  \Longrightarrow L$, $\gamma : F \Longrightarrow K \times h_{\mathbb{P} }
  (\Delta_d)$, $\alpha : K \Longrightarrow L$ and $e \in \{0, 1\}$. Suppose
  $\alpha$ is mono and the outer diagram of

  
  \begin{center}
$
\xy
{\ar@{>->}^{\alpha_P} (0,0)*+++{K(P)} ; (45,0)*+{L(P)}};
{\ar@{>->}_{\partial_{d,K,P}^e} (0,0)*+++{K(P)} ; (0,-20)*+{K(P)\times \mathbb{L}(P,\Delta_d)}};
{\ar@{>->}_{\alpha_P\times\operatorname{id}} (0,-20)*+++{K(P)\times \mathbb{L}(P,\Delta_d)} ; (45,-20)*+{L(P)\times\mathbb{L}(P,\Delta_d)}};
{\ar@{->}^{\partial_{d,L,P}^e} (45,0)*+{L(P)} ; (45,-20)*+{L(P)\times\mathbb{L}(P,\Delta_d)}};
{\ar@/_1.4pc/@{->}_{\sigma_{d,K,P}} (2,-17) ; (2,-3)};
{\ar@/_1.9pc/@{->}_{\gamma_{P}} (-15,15)*+{F(P)} ; (0,-20)*+{K(P)\times \mathbb{L}(P,\Delta_d)}};
{\ar@/^1.9pc/@{->}^{\beta_{P}} (-15,15)*+{F(P)} ; (45,0)*+{L(P)}};
{\ar@{.>}^{\sigma_{d,K,P}\;\circ\;\gamma_{P}} (-15,15)*+{F(P)} ; (0,0)*+{K(P)}};
\endxy
$
\end{center}
  
  commutes. Assume $x \in F (P)$, $(a, f) = \gamma_P (x)$ and $b =
  \beta_P (x)$. Then $f = k^e_{d,}$ and $b = \alpha_P (x)$ since the outer
  diagram commutes. Hence
  \[ \left( \partial^e_{d, K, P} \circ \sigma_{d, K, P} \circ \gamma_P \right)
     (x) = \left( \partial^e_{d, K, P} \circ \sigma_{d, K, P}  \right) \left(
     a, k^e_{d, P}  \right) = \partial^e_{d, K, P} (a) = \left( a, k^e_{d, P} 
     \right) = \gamma_P (x) \]
  and
  \[ \left( \alpha_P \circ \sigma_{d, K, P} \circ \gamma_P \right) (x) =
     \alpha_P (a) = b = \beta_P (x) \]
  Moreover, $ \sigma_{d, K, P} \circ \gamma_P$ is the unique morphism with
  this property since $\alpha_p$ is mono.
\end{proof}

\begin{lemma}
  \label{lem:canonical-mor}The canonical morphism $\left[ \partial^0_d,
  \partial^1_d \right] : \tmop{id}_{\tmop{Sh} ( \mathbb{P}, \tau)} +
  \tmop{id}_{\tmop{Sh} ( \mathbb{P}, \tau)} \Longrightarrow I_d$ is a mono.
\end{lemma}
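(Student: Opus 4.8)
The plan is to verify that for each sheaf $F$ the component
\[ [\partial^0_{d, F}, \partial^1_{d, F}] : F + F \longrightarrow F \times h_{\mathbb{P}} (\Delta_d) \]
is a mono; since coproducts of endofunctors are computed objectwise and monos in $\tmop{Sh} ( \mathbb{P}, \tau)$ are detected objectwise (the inclusion into presheaves is a fully faithful right adjoint, so it preserves and reflects monos, which in presheaves are the objectwise injections), this is precisely the assertion that the natural transformation is a mono. I would reduce the problem using the standard fact that in a topos, where coproducts are disjoint and universal, the map $[a, b] : A + B \to C$ induced by two monos $a : A \rightarrowtail C$ and $b : B \rightarrowtail C$ is itself a mono if and only if the intersection $A \times_C B$ is the initial object $0$. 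Concretely, pulling $[a, b]$ back against itself and invoking universality and disjointness exhibits its kernel pair as $A + (A \times_C B) + (B \times_C A) + B$, which collapses to the diagonal $A + B$ exactly when the mixed pullbacks vanish.

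First I would dispatch the two legs. Each $\partial^e_{d, F}$ equals $\langle \tmop{id}_F, c^e \rangle$, where $c^e : F \to h_{\mathbb{P}} (\Delta_d)$ is the composite of $F \to 1$ with the global section singling out the constant dimap of value $e$; since $\sigma_{d, F} \circ \partial^e_{d, F} = \tmop{id}_F$, both legs are split monos, so there is nothing to prove for them.

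The substance is the computation of the intersection $\partial^0_{d, F} \times_{I_d (F)} \partial^1_{d, F}$. Because limits in $\tmop{Sh} ( \mathbb{P}, \tau)$ are computed objectwise, over each $P \in \mathbb{P}$ this pullback is
\[ \{ (x, x') \in F (P) \times F (P) \mid (x, k^0_{d, P}) = (x', k^1_{d, P}) \}, \]
which is empty unless $k^0_{d, P} = k^1_{d, P}$ in $\mathbb{L} (P, \Delta_d)$, and is isomorphic to $F (P)$ otherwise. But the constant dimaps of values $0$ and $1$ coincide as functions $P \to \Delta_d$ only when $P = \varnothing$. Hence the intersection has empty sections over every nonempty $P$ and a single (terminal) section over $\varnothing$.

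It remains to recognise this objectwise-described subsheaf as the initial object, and this is where the only genuinely delicate point lies: the behaviour at $\varnothing$. A naive objectwise injectivity test on the presheaf coproduct $F \sqcup F$ would \emph{fail} there, since the two copies of the unique section collapse. The issue is resolved by observing that $\varnothing \in \mathbb{P}$ is covered by the empty family in $\tau$, so every sheaf — in particular the initial sheaf $0$ — is forced to take the terminal value at $\varnothing$; thus $0 (\varnothing) = \{ \ast \}$ while $0 (P) = \varnothing$ for $P \neq \varnothing$. This matches the intersection computed above, giving $\partial^0_{d, F} \times_{I_d (F)} \partial^1_{d, F} \cong 0$, and the disjointness criterion then yields that $[\partial^0_{d, F}, \partial^1_{d, F}]$ is a mono. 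The main obstacle is thus not any hard calculation but keeping the sheaf-theoretic coproduct (equivalently, the correct identification of $0$ at the empty epo-space) distinct from the pointwise presheaf coproduct.
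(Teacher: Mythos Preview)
Your argument is correct, but it is organised rather differently from the paper's. The paper simply writes down the component at $P$ as a map of sets $F(P)+F(P)\to F(P)\times\mathbb{L}(P,\Delta_d)$ and checks injectivity by a two--case analysis, tacitly using $k^0_{d,P}\neq k^1_{d,P}$ to exclude the mixed case; in particular it never mentions the empty epo-space or the distinction between presheaf and sheaf coproducts. You instead invoke the extensivity criterion (in a topos, $[a,b]$ is mono iff $a,b$ are mono with initial intersection), then compute the fibre product $\partial^0_{d,F}\times_{I_d(F)}\partial^1_{d,F}$ sectionwise and identify it with the initial sheaf, taking care that both agree even at $P=\varnothing$ because the empty cover forces every sheaf to be terminal there. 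What your route buys is precisely this rigour at the boundary case: the paper's elementwise check, read literally on the presheaf coproduct, would fail at $\varnothing$, whereas your formulation sidesteps the issue by working with the correct initial object of $\tmop{Sh}(\mathbb{P},\tau)$. The paper's approach is shorter and perfectly adequate once one silently restricts to nonempty $P$ (or notes that sections over $\varnothing$ are trivial for sheaves), but yours makes the sheaf-theoretic bookkeeping explicit.
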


\begin{proof}
  The components of this morphism at $F \in \tmop{Sh} ( \mathbb{P}, \tau)$ and
  $P \in \mathbb{P}$ are given by
  \[ \begin{array}{llll}
       \left[ \partial^0_{d, F, P}, \partial^1_{d, F, P} \right] : & F (P) + F
       (P) & \longrightarrow & F (P) \times \mathbb{P} \left( P, \Delta_d 
       \right)\\
       & m & \longmapsto & \left\{ \begin{array}{ll}
         (x, k^0_{d, P}) & m = \tmop{in}_1 (x)\\
         (x, k^1_{d, P}) & m = \tmop{in}_2 (y)
       \end{array} \right.\\
       &  &  & 
     \end{array} \]
  Suppose $\left[ \partial^0_{d, F, P}, \partial^1_{d, F, P} \right] (m) =
  \left[ \partial^0_{d, F, P}, \partial^1_{d, F, P} \right] (m')$. There are
  two possible cases:
  \begin{enumeratealpha}
    \item \label{case-a}$m = \tmop{in}_1 (x) \tmop{and} m' = \tmop{in}_1
    (x')$;
    
    \item \label{case-b}$m = \tmop{in}_2 (x) \tmop{and} m' = \tmop{in}_2 (x')$
  \end{enumeratealpha}
  for some $x, x' \in F (P)$. Hence
  \begin{enumeratealpha}
    \item $(x, k^0_{d, P}) = (x', k^0_{d, P}) \Longrightarrow x = x'$;
    
    \item $(x, k^1_{d, P}) = (x', k^1_{d, P}) \Longrightarrow x = x'$.
  \end{enumeratealpha}
\end{proof}

\begin{proposition}
  \label{prop:dihomo}The quadruple
  \[  \mathcal{I}_d  \overset{\tmop{def} .}{=} (I_d, \partial_d^0,
     \partial^1_d, \sigma_d) \]
  is an interval.
\end{proposition}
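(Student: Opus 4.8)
The plan is to verify directly the two clauses of Definition \ref{def-homdat}, after first collecting the ingredients already in hand. By Remark \ref{rem:interval-inclusions} the quadruple $\mathcal{I}_d = (I_d, \partial^0_d, \partial^1_d, \sigma_d)$ is a cylinder, and by Lemma \ref{lem:cart-cylinder} it is moreover a \emph{cartesian} cylinder. Clause (ii) of Definition \ref{def-homdat}, that the canonical morphism $[\partial^0_d, \partial^1_d]$ is a mono, is exactly Lemma \ref{lem:canonical-mor}. Hence the only remaining obligation is clause (i): that the endofunctor $I_d = (-) \times h_{\mathbb{P}}(\Delta_d)$ preserves monos and colimits.

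For colimit preservation I would invoke cartesian closedness of the topos $\tmop{Sh}(\mathbb{P}, \tau)$: the functor $(-) \times h_{\mathbb{P}}(\Delta_d)$ is left adjoint to the exponential $(-)^{h_{\mathbb{P}}(\Delta_d)}$, and left adjoints preserve all colimits. This is in the same vein as the fact, recorded earlier, that colimits in a topos are universal.

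For mono preservation I would use that, for any fixed object $A$, the functor $(-) \times A$ preserves monos in any category with binary products: if $m : X \rightarrowtail Y$ is a mono, then $m \times \tmop{id}_A$ arises as the pullback of $m$ along the projection $Y \times A \longrightarrow Y$, and pullbacks of monos are monos. (Equivalently, a one-line componentwise check comparing first and second coordinates settles it.)

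Putting these together with the facts already recorded completes the verification that $\mathcal{I}_d$ meets every requirement of Definition \ref{def-homdat}, so it is an interval. I do not expect a genuine obstacle: the two substantive points — the cartesian property and the mono condition on $[\partial^0_d, \partial^1_d]$ — have already been isolated as the preceding lemmas, and clause (i) is routine topos theory. The only point deserving slight care is that colimit preservation genuinely relies on cartesian closedness (or universality of colimits) and is not a formal consequence of $I_d$ being a product functor in an arbitrary category.
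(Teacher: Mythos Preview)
Your proposal is correct and follows essentially the same route as the paper: both reduce clause (ii) and the cartesian property to Lemmas \ref{lem:cart-cylinder} and \ref{lem:canonical-mor} via Remark \ref{rem:interval-inclusions}, and dispatch clause (i) by standard topos theory. The only cosmetic difference is that the paper cites universality of colimits for colimit preservation where you invoke cartesian closedness, and the paper simply says mono preservation holds ``by construction'' where you spell out the pullback argument---but these are the same facts.
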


\begin{proof}
  The functor I preserves monos by construction. It preserves colimits by
  construction as well, since colimits are universal in a topos. Proposition
  \ref{prop:dihomo} follows thus by remark \ref{rem:interval-inclusions} and
  lemmata \ref{lem:cart-cylinder} and \ref{lem:canonical-mor}.
\end{proof}

\subsection{D-homotopy}Let $\Delta_D$ be be the unit interval equipped with
the natural total order, as in example \ref{exa:1}.

\begin{remark}
  \label{rem:natord}For any $P \in \mathbb{P}$, let
   \[  \begin{array}{llll}
          k^e_{D, P} : & P & \longrightarrow & \Delta_D\\
          & x & \longmapsto & e
        \end{array}  \] 
  be the the constant dimaps with values $e = 0$ and $e = 1$, respectively.
  Let
  \[ I_D  \overset{\tmop{def} .}{=} (-) \times h_{\mathbb{P} } (\Delta_D) :
     \tmop{Sh} ( \mathbb{P}, \tau) \longrightarrow \tmop{Sh} ( \mathbb{P}
     \tau) \]
  be the endofunctor acting by taking the product with $h_{\mathbb{P} }
  (\Delta_D)$. There are natural transformations
  \[  \begin{array}{l}
       \partial^e_D : \tmop{id}_{\tmop{Sh} ( \mathbb{P}, \tau)}
       \Longrightarrow I_D
     \end{array}  \]
  for $e \in \{0, 1\}$, given by
  \[ \begin{array}{llll}
       \partial^e_{D, F} : & F & \Longrightarrow & F \times h_{\mathbb{P} }
       (\Delta_D)\\
       &  &  & \\
       \partial^e_{D, F, P} : & F (P) & \longrightarrow & F (P) \times
       \mathbb{L} \left( P, \Delta_D \right)\\
       & x & \longmapsto & (x, k^e_{D, P})
     \end{array} \]
  There is furthermore the natural transformation $\sigma_D : I
  \Longrightarrow \tmop{id}_{\tmop{Sh} ( \mathbb{P}, \tau)}$ given
  component wise by the first projection $\sigma_{D, F}  \overset{\tmop{def}
  .}{=} \pi_{1, F} : F \times h_{\mathbb{P} } (\Delta_d) \Longrightarrow F$. 
\end{remark}

\begin{proposition}
  \label{prop:d-homo}The quadruple
  \[  \mathcal{I}_D  \overset{\tmop{def} .}{=} (I_D, \partial_D^0,
     \partial^1_D, \sigma_D) \]
  is an interval.
\end{proposition}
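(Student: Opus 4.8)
The plan is to proceed exactly as in the proof of Proposition \ref{prop:dihomo}, verifying the three requirements of Definition \ref{def-homdat} in turn. Remark \ref{rem:natord} already exhibits $(I_D, \partial^0_D, \partial^1_D, \sigma_D)$ as a cylinder, so it remains to check that $I_D$ preserves monos and colimits, that the cylinder is cartesian, and that the canonical morphism $[\partial^0_D, \partial^1_D] : \tmop{id}_{\tmop{Sh}(\mathbb{P}, \tau)} + \tmop{id}_{\tmop{Sh}(\mathbb{P}, \tau)} \Longrightarrow I_D$ is a mono.

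First I would observe that $I_D = (-) \times h_{\mathbb{P}}(\Delta_D)$ preserves colimits because colimits are universal in the topos $\tmop{Sh}(\mathbb{P}, \tau)$, and that it preserves monos since taking the product of a mono with a fixed object is again a mono. This is the same argument already used for $I_d$ in the proof of Proposition \ref{prop:dihomo}, and it makes no reference to the order carried by the interval object.

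Next, for the cartesian property and for the mono property of the canonical morphism, I would run the arguments of Lemma \ref{lem:cart-cylinder} and Lemma \ref{lem:canonical-mor} verbatim, replacing $\Delta_d$ throughout by $\Delta_D$. The crucial observation is that neither of those proofs uses any feature of the discrete order. The cartesian-ness argument only exploits that $\sigma_D$ is the first projection and that $\partial^e_D$ adjoins the constant dimap $k^e_{D, P}$ in the second coordinate; the mono argument only needs that the two constant dimaps $k^0_{D, P}$ and $k^1_{D, P}$ are distinct elements of $\mathbb{L}(P, \Delta_D)$, so that the images of $\tmop{in}_1$ and $\tmop{in}_2$ are separated by their second coordinates. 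Both facts hold unchanged for $\Delta_D$, since a constant map is non-decreasing for any order on the target and $0 \neq 1$.

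I expect no genuine obstacle. The only point requiring care is to confirm that nothing in Lemmata \ref{lem:cart-cylinder} and \ref{lem:canonical-mor} silently invoked the discreteness of the order — for instance, when asserting that the maps appearing in the diagrams are genuine dimaps. Since those maps are constant maps and first projections, both of which are non-decreasing with respect to the natural order just as well as the discrete one, the substitution of $\Delta_D$ for $\Delta_d$ goes through without change, and Proposition \ref{prop:d-homo} follows by Remark \ref{rem:natord} together with the $\Delta_D$-analogues of Lemmata \ref{lem:cart-cylinder} and \ref{lem:canonical-mor}.
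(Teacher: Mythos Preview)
Your proposal is correct and matches the paper's approach: the paper states Proposition \ref{prop:d-homo} without proof, evidently because the argument is identical to that for Proposition \ref{prop:dihomo} with $\Delta_D$ in place of $\Delta_d$. Your explicit verification that nothing in Lemmata \ref{lem:cart-cylinder} and \ref{lem:canonical-mor} depends on the discreteness of the order is exactly the point the paper leaves to the reader.
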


\subsection{Dihomotopy vs. D-homotopy}

\begin{proposition}
  \label{prop:obang}Let $0^D : \tmmathbf{1}_{\mathbb{L}} \longrightarrow
  \Delta_D$ the dimap choosing $0$. The induced morphism of sheaves
  \[ 0^D_{\ast} : \tmmathbf{1} \longrightarrow h_{\mathbb{P} } \left( \Delta_D
     \right) \]
  is an $\mathcal{I}_d$-homotopy inverse of the canonical morphism $!_D :
  h_{\mathbb{P} } \left( \Delta_D \right) \longrightarrow \tmmathbf{1}$. In
  particular, $h_{\mathbb{P} } \left( \Delta_D \right)$ is (strongly)
  $\mathcal{I}_d$-contractible.
\end{proposition}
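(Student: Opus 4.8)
The plan is to build the contracting homotopy explicitly in $\mathbb{L}$ and transport it into $\tmop{Sh}(\mathbb{P}, \tau)$ along $h_{\mathbb{P}}$, exploiting that $h_{\mathbb{P}}$ preserves products (Lemma \ref{lem:lim-pres}). One of the two homotopy-inverse conditions is immediate: since $\tmmathbf{1}$ is terminal we have $!_D \circ 0^D_{\ast} = \tmop{id}_{\tmmathbf{1}}$, so a fortiori $!_D \circ 0^D_{\ast} \sim_{\mathcal{I}_d} \tmop{id}_{\tmmathbf{1}}$. It remains to exhibit an $\mathcal{I}_d$-homotopy from $\tmop{id}_{h_{\mathbb{P}}(\Delta_D)}$ to $0^D_{\ast} \circ !_D$, the latter being the endomorphism that collapses every section to the constant dimap at $0$.

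For this I would take the continuous map
\[ H : [0, 1] \times [0, 1] \longrightarrow [0, 1], \qquad H(t, s) = (1 - s)\, t, \]
viewed as a map of local epo-spaces $H : \Delta_D \times \Delta_d \longrightarrow \Delta_D$. The crux is that $H$ is a dimap, and this is exactly where the discreteness of the dihomotopy interval is used: in $\Delta_D \times \Delta_d$ a relation $(t, s) \preccurlyeq (t', s')$ forces $s = s'$ (discrete second factor) together with $t \leqslant t'$, whence $H(t, s) = (1 - s)\, t \leqslant (1 - s)\, t' = H(t', s')$ because $1 - s \geqslant 0$. Observe that $H$ is strictly decreasing in $s$, so it would fail to be a dimap on $\Delta_D \times \Delta_D$; it is precisely the discrete order on the second factor that makes $\Delta_D$ contractible in this homotopy theory.

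Applying $h_{\mathbb{P}}$ and using $I_d(h_{\mathbb{P}}(\Delta_D)) = h_{\mathbb{P}}(\Delta_D) \times h_{\mathbb{P}}(\Delta_d) \cong h_{\mathbb{P}}(\Delta_D \times \Delta_d)$, I obtain a morphism $h := h_{\mathbb{P}}(H) : I_d(h_{\mathbb{P}}(\Delta_D)) \longrightarrow h_{\mathbb{P}}(\Delta_D)$. Under this identification the boundary inclusion $\partial^e_d$ corresponds to the dimap $t \mapsto (t, e)$, so $h \circ \partial^0_d = h_{\mathbb{P}}(H(-, 0)) = \tmop{id}$ and $h \circ \partial^1_d = h_{\mathbb{P}}(H(-, 1)) = 0^D_{\ast} \circ !_D$; thus $\tmop{id} \sim_{\mathcal{I}_d} 0^D_{\ast} \circ !_D$. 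Combined with the trivial relation above, this shows $0^D_{\ast}$ and $!_D$ are mutually inverse $\mathcal{I}_d$-homotopy equivalences. Since moreover $H(0, s) = 0$ for all $s$, the homotopy fixes $0$ throughout, yielding the \emph{strong} contractibility. I expect the only genuine verification to be that $H$ is a dimap; everything else is formal, resting on the limit-preservation of $h_{\mathbb{P}}$ and the two boundary computations.
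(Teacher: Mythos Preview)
Your proof is correct and follows essentially the same strategy as the paper: both construct an explicit linear contracting homotopy on $\Delta_D$ parametrized by the discrete interval $\Delta_d$, relying on the fact that discreteness in the second coordinate forces $s=s'$ so that monotonicity in $t$ alone suffices. The only cosmetic differences are that the paper uses the multiplication $(t,s)\mapsto ts$, described directly at the level of sections as $h_P(f,g)=f\cdot g$, rather than your $(t,s)\mapsto (1-s)t$ built in $\mathbb{L}$ and transported along $h_{\mathbb{P}}$; this merely swaps which boundary yields the identity and which yields the constant map.
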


\begin{proof}
  Obviously $!_D \circ 0^D_{\ast} = \tmop{id}$. Let $f \in h_{\mathbb{P} }
  \left( \Delta_D \right) (P) = \mathbb{L} (P, \Delta_D)$ and $k \in
  h_{\mathbb{P} } \left( \Delta_d \right) (P) = \mathbb{L} (P, \Delta_d)$. The
  assignment
  \[ h_P (f, g) \overset{\tmop{def} .}{=} f \cdot g \]
  (with $f \cdot g$ the point-wise multiplication) determines the morphism of
  sheaves
  \[ h : h_{\mathbb{P} } \left( \Delta_D \right) \times h_{\mathbb{P} } \left(
     \Delta_d \right) \longrightarrow h_{\mathbb{P} } \left( \Delta_D \right)
  \]
  This morphism makes

  
\begin{center}
$
\xymatrix{
\mathbb{L}(P,\Delta_D) \ar[d]_{\partial_{d,h_\mathbb{P}(\Delta_D),P}^{0}} 
\ar[dr]^{0_\ast^D \circ !_D} &
\\
\mathbb{L}(P,\Delta_D) \times \mathbb{L}(P,\Delta_d) 
\ar[r]^>>>>>>{h_P} &
\mathbb{L}(P,\Delta_D)
\\
\mathbb{L}(P,\Delta_D) \ar[u]^{\partial_{d,h_\mathbb{P}(\Delta_D),P}^{1}} 
\ar@{=}[ur] &
}
$
\end{center}
  
  commute for all $P \in \mathbb{P}$, so $h$ is an \
  $\mathcal{I}_d$-homotopy witnessing $0^D_{\ast} \circ !_D \sim \tmop{id}$.
\end{proof}

\begin{corollary}
  \label{cor:obang} The morphism of sheaves
  \[ \tmop{id}_F \times 0^D_{\ast} : F \times \tmmathbf{1} \longrightarrow F
     \times h_{\mathbb{P} } \left( \Delta_D \right)  \]
  is an $\mathcal{I}_d$-weak equivalence for all $F \in \tmop{Sh} \left(
  \mathbbm{P}, \tau \right)$.
\end{corollary}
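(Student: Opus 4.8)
The plan is to deduce the corollary from Proposition \ref{prop:obang} by observing that multiplying a fixed object $F$ into everything preserves $\mathcal{I}_d$-homotopies, and hence preserves $\mathcal{I}_d$-homotopy equivalences; the conclusion then follows immediately from Remark \ref{rem:homoequs1}, since every $\mathcal{I}_d$-homotopy equivalence is an $\mathcal{I}_d$-weak equivalence. Concretely, Proposition \ref{prop:obang} exhibits $0^D_{\ast}$ as an $\mathcal{I}_d$-homotopy equivalence: one composite $!_D \circ 0^D_{\ast} = \tmop{id}$ is strict, and there is an explicit $\mathcal{I}_d$-homotopy $h : h_{\mathbb{P}}(\Delta_D) \times h_{\mathbb{P}}(\Delta_d) \longrightarrow h_{\mathbb{P}}(\Delta_D)$ with $h \circ \partial^0_d = 0^D_{\ast} \circ {!_D}$ and $h \circ \partial^1_d = \tmop{id}$. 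Modulo the canonical iso $F \times \mathbf{1} \cong F$, the map $\tmop{id}_F \times 0^D_{\ast}$ is precisely the image of $0^D_{\ast}$ under the endofunctor $F \times (-)$, and I would take $\tmop{id}_F \times {!_D}$ as its candidate homotopy inverse.

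First I would record that one composite is strict, namely $(\tmop{id}_F \times {!_D}) \circ (\tmop{id}_F \times 0^D_{\ast}) = \tmop{id}_F \times ({!_D} \circ 0^D_{\ast}) = \tmop{id}_{F \times \mathbf{1}}$. For the other composite $(\tmop{id}_F \times 0^D_{\ast}) \circ (\tmop{id}_F \times {!_D}) = \tmop{id}_F \times (0^D_{\ast} \circ {!_D})$ I would transport the homotopy $h$ across the product. The cylinder $I_d = (-) \times h_{\mathbb{P}}(\Delta_d)$ commutes with $F \times (-)$ via the canonical associativity/commutativity isomorphism $(F \times X) \times h_{\mathbb{P}}(\Delta_d) \cong F \times \bigl(X \times h_{\mathbb{P}}(\Delta_d)\bigr)$, so setting $H$ to be $\tmop{id}_F \times h$ precomposed with this iso yields an $\mathcal{I}_d$-homotopy on $F \times h_{\mathbb{P}}(\Delta_D)$ with $H \circ \partial^0_d = \tmop{id}_F \times (0^D_{\ast} \circ {!_D})$ and $H \circ \partial^1_d = \tmop{id}$. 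This shows $\tmop{id}_F \times 0^D_{\ast}$ is an $\mathcal{I}_d$-homotopy equivalence, and Remark \ref{rem:homoequs1} then finishes the proof.

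The only genuinely non-formal point, and the one I would spell out, is the compatibility of the associativity/commutativity isomorphism with the boundary natural transformations $\partial^0_d$ and $\partial^1_d$ — that is, that $F \times (-)$ preserves the full cylinder structure and not merely the underlying functor $I_d$. This reduces to the component description in Remark \ref{rem:interval-inclusions}, where $\partial^e_{d,G}$ acts by $x \mapsto (x, k^e_{d,P})$: under the iso, $\partial^e_{d, F \times h_{\mathbb{P}}(\Delta_D)}$ corresponds to $\tmop{id}_F \times \partial^e_{d, h_{\mathbb{P}}(\Delta_D)}$, which is exactly what makes the boundary computation for $H$ go through. Everything else is bookkeeping, and I do not anticipate any real obstacle beyond carefully tracking these product isomorphisms.
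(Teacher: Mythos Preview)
Your proposal is correct and follows the same route as the paper: both argue that $\tmop{id}_F \times 0^D_{\ast}$ is an $\mathcal{I}_d$-homotopy equivalence with homotopy inverse $\tmop{id}_F \times {!_D}$, obtained by applying $F \times (-)$ to the data of Proposition~\ref{prop:obang}. The paper compresses your transport-of-homotopy argument into the single phrase ``by functoriality of $F \times (-)$''; your explicit verification that $\partial^e_{d, F \times X}$ corresponds to $\tmop{id}_F \times \partial^e_{d,X}$ under the associativity isomorphism is exactly the content that phrase is meant to invoke.
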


\begin{proof}
  We have
  \[ \text{$\left( \tmop{id}_F \times !_D \right) \circ \left( \tmop{id}_F
     \times 0^D_{\ast} \right) = \tmop{id}_{F \times \tmmathbf{1}}$} \]
  and
  \[ \left( \tmop{id}_F \times 0^D_{\ast} \right) \circ \left( \tmop{id}_F
     \times !_D \right) \sim \tmop{id}_{F \times h_{\mathbb{P} } \left(
     \Delta_D \right)} \]
  by functoriality of $F \times (-)$.
\end{proof}

\begin{lemma}
  \label{lem:id-bang}Let $!_d : h_{\mathbb{P} } \left( \Delta_d \right)
  \longrightarrow \tmmathbf{1}$ be the canonical morphism of sheaves. The
  morphism of sheaves $(\tmop{id}_F \times !_d) : F \times h_{\mathbb{P} }
  \left( \Delta_d \right) \longrightarrow F \times \tmmathbf{1}$ is an
  $\mathcal{I}_d$-weak equivalence for all $F \in \tmop{Sh} \left(
  \mathbbm{P}, \tau \right)$. 
\end{lemma}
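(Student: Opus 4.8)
The plan is to identify $\tmop{id}_F \times \,!_d$ with the cylinder collapse $\sigma_{d, F}$ of the interval $\mathcal{I}_d$, and then to invoke the general fact, recorded in remark \ref{rem:homoequs2}, that collapse maps are weak equivalences.

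First I would use the canonical isomorphism $\pi_1 : F \times \tmmathbf{1} \longrightarrowlim^{\cong} F$ coming from the terminal object. Since $!_d : h_{\mathbb{P}}(\Delta_d) \longrightarrow \tmmathbf{1}$ is the unique map to the terminal sheaf, the triangle
\[ \pi_{1, F} = \pi_1 \circ (\tmop{id}_F \times \,!_d) \]
commutes, where $\pi_{1, F} : F \times h_{\mathbb{P}}(\Delta_d) \longrightarrow F$ is the first projection. By remark \ref{rem:interval-inclusions} this projection is precisely the structure map $\sigma_{d, F}$ of the cylinder $I_d$.

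Next I would appeal to remark \ref{rem:homoequs2}, which asserts that $\sigma_X$ is an $\mathcal{I}$-weak equivalence for every object $X$; applied to the interval $\mathcal{I}_d$ and the object $F$, this yields that $\sigma_{d, F} = \pi_{1, F}$ is an $\mathcal{I}_d$-weak equivalence. As the isomorphism $\pi_1 : F \times \tmmathbf{1} \longrightarrow F$ is trivially an $\mathcal{I}_d$-weak equivalence, the commuting triangle above together with the two-out-of-three property forces $\tmop{id}_F \times \,!_d$ to be an $\mathcal{I}_d$-weak equivalence as well.

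I expect no genuine obstacle here; the only point requiring care is the bookkeeping of the first step, namely checking that $\tmop{id}_F \times \,!_d$ really does coincide with $\sigma_{d, F}$ after composing with the canonical iso. The substantive content is hidden inside remark \ref{rem:homoequs2}, whose justification runs through proposition \ref{prop:cisinski1}(iii): the coface $\partial^0_{d, F}$ is $\mathcal{I}_d$-anodyne, hence an $\mathcal{I}_d$-trivial cofibration and in particular an $\mathcal{I}_d$-weak equivalence, and since $\sigma_{d, F} \circ \partial^0_{d, F} = \tmop{id}_F$ by definition \ref{def:cyl}, two-out-of-three gives that $\sigma_{d, F}$ is a weak equivalence. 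If one preferred to avoid naming $\sigma$, the same two-out-of-three argument applies directly: $\partial^0_{d, F}$ is a section of $\tmop{id}_F \times \,!_d$ (again up to the iso $F \times \tmmathbf{1} \cong F$) and is $\mathcal{I}_d$-anodyne, hence an $\mathcal{I}_d$-weak equivalence, so $\tmop{id}_F \times \,!_d$ is one too.
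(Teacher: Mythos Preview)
Your proposal is correct and follows essentially the same route as the paper: the paper also factors $\sigma_{d,F}=\pi_{1,F}$ through $\tmop{id}_F\times\,!_d$ via the canonical isomorphism $F\times\tmmathbf{1}\cong F$ and then invokes remark~\ref{rem:homoequs2} to conclude. Your write-up is in fact more explicit than the paper's, spelling out the two-out-of-three step and the underlying justification of remark~\ref{rem:homoequs2} via proposition~\ref{prop:cisinski1}(iii).
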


\begin{proof}
  The square

  
  \begin{center}
$
\xy
{\ar@{->}^>>>>>>>{\sigma \: = \: \pi_1} (0,0)*+{F \times h_\mathbb{P}\left ( \Delta_d \right )} ; (25,0)*+{F}};
{\ar@{->}_{id \times !} (0,0)*+{F \times h_\mathbb{P}\left ( \Delta_d \right )} ; (0,-20)*+{F \times \mathbf{1}}};
{\ar@{->}^\cong (0,-20)*+{F \times \mathbf{1}} ; (25,-20)*+{F}};
{\ar@{=} (25,0)*+{F} ; (25,-20)*+{F}};
\endxy
$
\end{center}
  
  commutes and $\sigma$ is an $\mathcal{I}_d$-weak equivalence by
  remark \ref{rem:homoequs2}.
\end{proof}

\begin{remark}
  \label{rem:iota}Let $i : \Delta_d \longrightarrow \Delta_D$ be the morphism
  in $\mathbb{L}$ with the identity as its underlying map and let
  \[ \text{$i_{\ast} : h_{\mathbb{P} } \left( \Delta_d \right)
     \longrightarrow h_{\mathbb{P} } \left( \Delta_D \right)$} \]
  be the induced morphism of sheaves. The morphism of sheaves $\iota : I_d
  \longrightarrow I_D$ given by
  \[ \begin{array}{l}
       
     \end{array} \iota_F  \overset{\tmop{def} .}{=} \tmop{id}_F \times
     i_{\ast}  \]
  at $F \in \tmop{Sh} ( \mathbb{P}, \tau)$ is a morphism of intervals $\iota :
  \mathcal{I}_d \longrightarrow \mathcal{I}_D$.
\end{remark}

\begin{lemma}
  \label{lem:itoiprime}The morphism of intervals $\iota : \mathcal{I}_d
  \longrightarrow \mathcal{I}_D$ is a component-wise $\mathcal{I}_d$-weak
  equivalence.
\end{lemma}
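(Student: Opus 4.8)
The plan is to reduce the statement to the two-out-of-three property of the class $\mathcal{W}_{\mathcal{I}_d}$, which is available because these maps are the weak equivalences of an honest model structure by Theorem \ref{theo:cisinski}. Fix $F \in \tmop{Sh}(\mathbb{P}, \tau)$; the task is to show that the component
\[ \iota_F = \tmop{id}_F \times i_{\ast} : F \times h_{\mathbb{P}}(\Delta_d) \longrightarrow F \times h_{\mathbb{P}}(\Delta_D) \]
lies in $\mathcal{W}_{\mathcal{I}_d}$. Since $\iota$ is a morphism of intervals, clause (ii) of Definition \ref{def:cyl} gives the commuting triangle $\sigma_{D, F} \circ \iota_F = \sigma_{d, F}$. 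By right cancellation in the two-out-of-three property, it therefore suffices to prove that both structure maps $\sigma_{d, F}$ and $\sigma_{D, F}$ are $\mathcal{I}_d$-weak equivalences.

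The first map is immediate: $\sigma_{d, F}$ is an $\mathcal{I}_d$-weak equivalence by Remark \ref{rem:homoequs2}, which records that $\sigma_X$ is always an $\mathcal{I}$-weak equivalence for the interval $\mathcal{I}$ defining the model structure. For the second map I would leverage the $\mathcal{I}_d$-contractibility of $h_{\mathbb{P}}(\Delta_D)$ from Proposition \ref{prop:obang}. Concretely, Corollary \ref{cor:obang} already asserts that $\tmop{id}_F \times 0^D_{\ast} : F \times \tmmathbf{1} \longrightarrow F \times h_{\mathbb{P}}(\Delta_D)$ is an $\mathcal{I}_d$-weak equivalence, and postcomposing with the projection yields
\[ \sigma_{D, F} \circ (\tmop{id}_F \times 0^D_{\ast}) : F \times \tmmathbf{1} \xrightarrow{\cong} F, \]
the canonical isomorphism, hence an $\mathcal{I}_d$-weak equivalence. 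Two-out-of-three then forces $\sigma_{D, F} \in \mathcal{W}_{\mathcal{I}_d}$, and a second application to the triangle $\sigma_{D, F} \circ \iota_F = \sigma_{d, F}$ gives $\iota_F \in \mathcal{W}_{\mathcal{I}_d}$, as required.

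The only genuinely homotopical input is the $\mathcal{I}_d$-contractibility of $h_{\mathbb{P}}(\Delta_D)$, and the step I expect to be the true pivot is the fibre-wise upgrade from that contractibility to the statement that $\sigma_{D, F}$ is an $\mathcal{I}_d$-weak equivalence for \emph{every} $F$; this is precisely what Corollary \ref{cor:obang} delivers through functoriality of $F \times (-)$, so once that corollary is invoked the remainder is purely formal. An alternative, more computational route would transport the multiplication homotopy $h$ of Proposition \ref{prop:obang} along $F \times (-)$, exhibiting $\iota_F$ as directly $\mathcal{I}_d$-homotopic to $\tmop{id}_F \times (0^D_{\ast} \circ {!}_d)$ and then appealing to Remark \ref{rem:homoequs2}; I prefer the two-out-of-three argument above since it avoids re-verifying that the transported assignment is a well-defined morphism of sheaves.
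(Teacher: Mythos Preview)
Your proof is correct, and it takes a different route from the paper's. The paper constructs an explicit $\mathcal{I}_d$-homotopy $\tmop{id}_F \times h'$ (with $h'_P(f,g) = f\cdot g$ the multiplication map $h_{\mathbb{P}}(\Delta_d)\times h_{\mathbb{P}}(\Delta_d)\to h_{\mathbb{P}}(\Delta_D)$) exhibiting $\iota_F \sim_{\mathcal{I}_d} (\tmop{id}_F\times 0^D_{\ast})\circ(\tmop{id}_F\times !_d)$; it then invokes Lemma~\ref{lem:id-bang} and Corollary~\ref{cor:obang} to see that both factors of the right-hand composite are $\mathcal{I}_d$-weak equivalences, and finishes with Remark~\ref{rem:homoequs2}. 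Your argument instead exploits the structural identity $\sigma_{D,F}\circ\iota_F=\sigma_{d,F}$ coming from the interval-morphism axioms, reducing everything to two applications of two-out-of-three and avoiding the explicit homotopy altogether; Lemma~\ref{lem:id-bang} is not even needed. Amusingly, the ``alternative, more computational route'' you sketch in your final paragraph is precisely what the paper does. Your approach is cleaner and more conceptual; the paper's has the minor advantage of making the contracting homotopy visible at the level of $\iota_F$ itself.
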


\begin{proof}
  The assignment
  \[ h'_P (f, g) \overset{\tmop{def} .}{=} f \cdot g \]
  determines a morphism of sheaves
  \[ h' : h_{\mathbb{P} } \left( \Delta_d \right) \times h_{\mathbb{P} }
     \left( \Delta_d \right) \longrightarrow h_{\mathbb{P} } \left( \Delta_D
     \right) \]
  such that

  
  \begin{center}
$
\xymatrix{
F \times h_\mathbb{P}(\Delta_d) 
\ar[d]_{\partial_{d,F \times h_\mathbb{P}(\Delta_D)}^{0}} 
\ar[r]^{id_F \times !_d} & F \times \mathbf{1}
\ar[d]^{id_F \times 0_\ast^D}
\\
 F \times h_\mathbb{P}(\Delta_d) \times h_\mathbb{P}(\Delta_d)
\ar[r]^>>>>>{id_F \times h'} &
F \times h_\mathbb{P}(\Delta_D)
\\
F \times h_\mathbb{P}(\Delta_d) 
\ar[u]^{\partial_{d,F \times h_\mathbb{P}(\Delta_D)}^{1}} 
\ar[ur]_{\iota_F} &
}
$
\end{center}
  
  commutes for all $F \in \tmop{Sh} ( \mathbb{P}, \tau)$. Now we
  have that $\tmop{id}_F \times !_d$ is an $\mathcal{I}_d$-weak equivalence by
  lemma \ref{lem:id-bang} while $\tmop{id}_F \times 0^D_{\ast}$ is an
  $\mathcal{I}_d$ - weak equivalence by corollary \ref{cor:obang}, so
  $\iota_F$ is an $\mathcal{I}_d$-weak equivalence by remark
  \ref{rem:homoequs2}.
\end{proof}

\begin{theorem}
  \label{theo:dih-model-structure} $\mathcal{W}_{\mathcal{I}_D} \subseteq
  \mathcal{W}_{\mathcal{I}_d}$. In particular, \ $\tmop{id} : \left( \tmop{Sh}
  ( \mathbb{P}, \tau), \mathcal{I}_D \right) \longrightarrow \left( \tmop{Sh}
  ( \mathbb{P}, \tau), \mathcal{I}_d \right) $ is a left Quillen functor.
\end{theorem}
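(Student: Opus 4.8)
The plan is to recognise the statement as a direct instance of the general machinery already assembled in Section \ref{sec:intervals}, namely Proposition \ref{prop:weaki1}, with the interval morphism supplied by Remark \ref{rem:iota} and its homotopical property supplied by Lemma \ref{lem:itoiprime}. Concretely, I would set $\mathcal{I} = \mathcal{I}_d$ and $\mathcal{I}' = \mathcal{I}_D$ and take $\iota : \mathcal{I}_d \longrightarrow \mathcal{I}_D$ to be the morphism of intervals of Remark \ref{rem:iota}, whose component at $F$ is $\tmop{id}_F \times i_{\ast}$ for the inclusion $i : \Delta_d \longrightarrow \Delta_D$ of the discretely ordered interval into the naturally ordered one. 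The whole argument then consists in feeding this $\iota$ into the appropriate transfer proposition.

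The first and essentially only step is to note that Lemma \ref{lem:itoiprime} asserts precisely that this $\iota$ is a section-wise $\mathcal{I}_d$-weak equivalence, which is exactly the hypothesis of Proposition \ref{prop:weaki1} (with the source interval $\mathcal{I} = \mathcal{I}_d$). Applying that proposition yields at once both conclusions: the inclusion $\mathcal{W}_{\mathcal{I}_D} \subseteq \mathcal{W}_{\mathcal{I}_d}$ and the statement that $\tmop{id} : ( \tmop{Sh} ( \mathbb{P}, \tau), \mathcal{I}_D ) \longrightarrow ( \tmop{Sh} ( \mathbb{P}, \tau), \mathcal{I}_d )$ is left Quillen. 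For completeness I would spell out the formal last implication: both model structures are produced by Theorem \ref{theo:cisinski}, so in each the cofibrations are exactly the monos, whence the identity functor trivially preserves cofibrations; and the inclusion $\mathcal{W}_{\mathcal{I}_D} \subseteq \mathcal{W}_{\mathcal{I}_d}$ shows that a mono which is an $\mathcal{I}_D$-weak equivalence is also an $\mathcal{I}_d$-weak equivalence, so the identity preserves trivial cofibrations as well, which is the left Quillen condition in the stated direction.

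I expect no genuine obstacle to remain at this point, since all the real content has been front-loaded into Lemma \ref{lem:itoiprime} (and the auxiliary Proposition \ref{prop:obang}, Corollary \ref{cor:obang} and Lemma \ref{lem:id-bang} that feed it), where the $\mathcal{I}_d$-contractibility of $h_{\mathbb{P}} ( \Delta_D )$ and the pointwise-multiplication homotopies do the work. The single subtlety worth guarding against is purely bookkeeping: one must use that $\iota$ is an $\mathcal{I}_d$-weak equivalence, that is, a weak equivalence for the \emph{source} interval, so that it is Proposition \ref{prop:weaki1} and not Proposition \ref{prop:weaki2} that applies, and the resulting inclusion comes out in the direction $\mathcal{W}_{\mathcal{I}_D} \subseteq \mathcal{W}_{\mathcal{I}_d}$ rather than its reverse.
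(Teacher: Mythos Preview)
Your proposal is correct and matches the paper's proof exactly: the paper's argument is the single line ``By proposition \ref{prop:weaki1} and lemma \ref{lem:itoiprime},'' and you have identified precisely these two ingredients with the correct instantiation $\mathcal{I}=\mathcal{I}_d$, $\mathcal{I}'=\mathcal{I}_D$. Your extra paragraph spelling out why the identity is left Quillen (same cofibrations, inclusion of weak equivalences) is in fact the content of the proof of Proposition~\ref{prop:weaki1} itself, so nothing is missing.
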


\begin{proof}
  By propositions \ref{prop:weaki1} and lemma \ref{lem:itoiprime}. 
\end{proof}

\begin{remark}
  The ``contracting homotopy''
  \[ h : h_{\mathbb{P} } \left( \Delta_d \right) \times h_{\mathbb{P} }
     \left( \Delta_d \right) \longrightarrow h_{\mathbb{P} } \left( \Delta_D
     \right) \]
  of proposition \ref{prop:obang}, given by the assignment
  \[ h_P (f, g) \overset{\tmop{def} .}{=} f \cdot g \]
  is crucial to establish theorem \ref{theo:dih-model-structure}. This
  homotopy exhibits the canonical morphism
  \[ !_D : h_{\mathbb{P} } \left( \Delta_D \right) \longrightarrow
     \tmmathbf{1} \]
  as an $\mathcal{I}_d$-homotopy equivalence (hence as an $\mathcal{I}_d$-weak
  equivalence). The argument fails in the other direction since there is no
  contracting homotopy
  \[ h_{\mathbb{P} } \left( \Delta_D \right) \times h_{\mathbb{P} } \left(
     \Delta_D \right) \longrightarrow h_{\mathbb{P} } \left( \Delta_d \right)
  \]
  as dimaps to $\Delta_d$ have to be constant on order-connected components. We
  nonetheless conjecture that
  \[  \mathcal{W}_{\mathcal{I}_d}  \not\subseteq
     \mathcal{W}_{\mathcal{I}_D} \]
  and expect to prove the assertion by cohomological means. The latter will be
  described in a subsequent paper.
\end{remark}

\end{document}